\newtheorem{Theorem}{Theorem}[section]
\newtheorem{proposition}[Theorem]{Proposition}
\newtheorem{corollary}[Theorem]{Corollary}
\newtheorem{lemma}[Theorem]{Lemma}
\theoremstyle{definition}
\newtheorem{definition}[Theorem]{Definition}
\newtheorem{Question}[Theorem]{Question}
\newtheorem{example}[Theorem]{Example}
\theoremstyle{remark}
\newtheorem{Remark}[Theorem]{Remark}
\newcommand{\Pp} {\mathcal{P}}
\newcommand{\Inv} {{\rm Inv}}
\newcommand{\inv}{{\rm inv}}
\newcommand{\A} {\mathcal{A}}
\newcommand{\B} {\mathcal{B}}
\newcommand{\G}{\mathcal{G}}
\newcommand{\U}{\mathcal{U}}
\newcommand{\SSF}{{\rm SSF}}
\title[A new family of posets]{A new family of posets generalizing the weak order on some Coxeter groups}
\author{Fran\c{c}ois Viard}
\date{\today}
\begin{document}
\maketitle

\begin{abstract}
We construct a poset from a simple acyclic digraph together with a valuation on its vertices, and we compute the values of its M\"obius function. We show that the weak order on Coxeter groups $A_{n-1}$, $B_n$, $\widetilde{A}_n$, and the flag weak order on the wreath product $\mathbb{Z} _r \wr S_n$ introduced by Adin, Brenti and Roichman, are special instances of our construction. We conclude by associating a quasi-symmetric function to each element of these posets. In the $A_{n-1}$ and $\widetilde{A}_n$ cases, this function coincides respectively with the classical Stanley symmetric function, and with Lam's affine generalization.
\end{abstract}

\section{Introduction}

The weak order on a Coxeter group $W$ is a partial order on $W$ which plays a significant role in many areas of algebra and algebraic combinatorics as Grassmannian geometry and Schubert calculus (see \cite{FGR}). Moreover, it is closely related to the geometry of the root system associated with a Coxeter group (see \cite{Dy11}, \cite{HD}, \cite{Pil}, or \cite{DHR}), and to the theory of quasi-symmetric functions (see \cite{BM} for a general survey) thanks to the Stanley symmetric functions. These functions were introduced by Stanley in \cite{S1}, in order to enumerate the reduced decompositions of any permutation $\sigma$ in the symmetric group $S_n$, equivalently enumerating the maximal chains from the identity to $\sigma$ in the weak order on $S_n$, and turned out to be of fundamental importance in many areas of algebra (see \cite{BJS}). In \cite{Lam}, Lam generalized Stanley's work to the affine Coxeter group of type $A$. 

In this paper we introduce a new family of posets, defined from a digraph together with a valuation on its vertices. Here, we focus exclusively on the case where the digraph is simple and acyclic, in which case the corresponding poset has a rich combinatorial structure. It appears that many well-known posets can be described within this theory, and after a careful case by case study, we show that the weak order on Coxeter groups of type $A$, $\widetilde{A}$ and $B$, the flag weak order on the wreath product $\mathbb{Z} _r \wr S_n$ (see \cite{ABR}), and the up-set (resp. down-set) lattice of any finite poset, admit such a description. 

The study of this family of posets will be further developed in \cite{FV2}, in which we will show how they can be used to study two long-standing conjectures of Matthew Dyer on the geometry of root systems in infinite Coxeter groups (see \cite{Dy93} and \cite{Dy11}). Moreover, in another subsequent publication we will highlight connections which exist between our theory and Tamari and Cabrian lattices.
Note that except for the case of Coxeter groups of type $A$, the content of \cite{FV2} will not overlap the content of the current paper. Indeed, \cite{FV2} is mainly centred on algebraic and geometric aspects of this construction, while here we develop the combinatorial ones: we give a new formula for the values of the M\"obius function and we provide a new combinatorial model for the maximal chains in the weak order on Coxeter groups of type $A$, $B$ and $\widetilde{A}$.

Our construction relies on a generalization of the notion of linear extension of a finite poset to simple acyclic digraph, and leads us to associate quasi-symmetric functions with each element of our posets, as it is the case for linear extensions in the context of $P$-partitions (see \cite{S3}). It seems that most of the functions associated with an element do not give much insights about the underlying poset structure. However, the form of the underlying digraph sometimes leads to a canonical choice among these quasi-symmetric functions, which occurs when considering the digraphs associated with types $A$ and $\widetilde{A}$. In these two cases, we show (following a similar methods as in \cite{FGR} and \cite{YY}) that the canonical quasi-symmetric functions which arise are exactly the Stanley and Lam's symmetric functions.

In the author's opinion, the connections between our construction and quasi-symmetric functions presented in Section~\ref{SecQuasi} just scratch the surface and would require a more exhaustive study. Moreover, our results suggest that the construction presented here could be generalized so as to obtain the weak order on any Coxeter group, and thus may lead to a generalization of Stanley symmetric functions to a wider class of Coxeter groups. A good starting point would be to look for a combinatorial description of type $B$ Stanley symmetric functions, using the digraph introduced here. \\

The paper is organised as follows: in Section~\ref{SecDef} we define the family of posets from \emph{valued digraphs}, which are couples of a simple acyclic digraph together with a valuation on its vertices. We exhibit some general properties of these posets, namely they are \emph{graded complete meet semi-lattices} in the general case, \emph{graded complete lattices} when the underlying digraph is finite, and we give a simple and explicit formula to compute the values of their \emph{M\"obius function}. In Section~\ref{SecWeak}, we show that the (right) weak order on Coxeter groups $A_{n-1}$, $B_n$ and $\widetilde{A}_n$, the flag weak order on $\mathbb{Z} _r \wr S_n$, and the up-set (resp. down-set) lattice of any finite poset can be described thanks to this theory.
In Section~\ref{SecQuasi}, we exhibit a link between these posets and the theory of quasi-symmetric functions. More precisely, we explain how the series associated with any \emph{$P$-partition} (see \cite{BM}), the Stanley symmetric functions, and Lam's generalization naturally arise from this description. \\

\noindent {\bf Acknowledgements.} Parts of the current paper have been accepted to FPSAC~2015 as an extended abstract (see \cite{FV3}).

\section{Definition of a new family of posets}\label{SecDef}
We begin with some standard definitions about poset theory and graph theory. A poset is a couple $\Pp=(P,\leq)$, where $P$ is a set and $\leq$ is a binary relation which is \emph{reflexive}, \emph{antisymmetric}, and \emph{transitive}. A poset $\Pp$ is called a \emph{complete meet (resp. join) semi-lattice} if and only if every subset $S$ of $\Pp$ has an \emph{infimum} (resp. \emph{supremum}) in $\Pp$, \emph{i.e.} there exists $z$ in $\Pp$ such that if $y \in \Pp$ and $y \leq x$ (resp. $x \leq y$) for all $x\in S$, then $y \leq z$ (resp. $z\leq y$). If $\Pp$ is both a complete join and meet semi-lattice, then we say that $\Pp$ is a \emph{complete lattice}. A \emph{lower set} of $\Pp$ is a subset $A$ of $\Pp$ such that for all $x \in A$ and $y$ in $P$, if $y \leq x$ then $y \in A$. It is classical that the lower sets of $\Pp$ ordered by inclusion form a lattice (see $\mathsection$3 in \cite{S2}).

A \emph{simple digraph} is a couple $G=(V,E)$, where $V$ is the set of \emph{vertices} of $G$, and $E$ is a subset of $V \times V$ called the set of \emph{arcs} of $G$. A \emph{cycle} of $G$ is a finite sequence $x_1,\ldots,x_n$ of vertices of $G$ such that $(x_i,x_{i+1})\in E$ for all $i \in \mathbb{N}$, where the indices are taken modulo $n$. A graph $G$ is called \emph{acyclic} if it does not have any cycle. On each vertex $x$ of $G$, we define the statistic $$d^+(x)=|\{ y \ | \ (x,y)\in E \}|,$$ called the \emph{out-degree} of $x$, and which is possibly infinite. \\

We now present a method to obtain all lower sets of a finite poset $\Pp$.
 Since $\Pp$ is finite, there exists $a_1$ in $\Pp$ which is a minimum, that is if $x \leq a_1$ in $\Pp$, then $x=a_1$. Let $\Pp_2=(P\setminus \{ a_1\},\leq)$ be the finite poset obtained by removing $a_1$ from $\Pp$. Then, there exists $a_2$ in $\Pp_2$ which is a minimum and we can define the poset $\Pp_3$ obtained by removing $a_2$ in $\Pp_2$, and so on. Finally, we end with an injective sequence $[a_1,\ldots,a_n]$ of elements of $\Pp$. This sequence is a linear extension of $\Pp$ by construction. Furthermore, one can easily prove by induction that all the linear extensions of $\Pp$ can be obtained by this way. 

In a certain sense, this method ``peels'' a finite poset element by element, in order to obtain a family of sequences which give rise to an interesting family of sets (here, the lower sets). Here, we propose to apply a similar principle to a simple acyclic digraph. Namely, we will peel the digraph vertex by vertex, with respect to a constraint given by a valuation on its vertices. It will give rise to a family of sequences of vertices of the graph, then to a family of subsets of vertices having an interesting poset structure once ordered by inclusion. 

We start with the definition of the valuation on the vertices of a simple acyclic digraph.

\begin{definition}
Let $G=(V,E)$ be a simple acyclic digraph. A valuation $\theta:V \rightarrow \mathbb{N}$ is called an \emph{out-degree compatible valuation} on $G$ (OCV) if and only if for all $x \in V$, we have 
\[0 \leq \theta(x) \leq d^+(x).\]
A pair $\G=(G,\theta)$, where $G$ is a simple acyclic digraph and $\theta$ is an OCV, is called a \emph{valued digraph}.
\end{definition}

In what follows, $\mathcal{G}=(G,\theta)$ will denote a valued digraph. Recall that our aim is to generalize the method which peels finite posets to valued digraphs. Thus, we first need to specify which vertices of a valued digraph can be peeled. This is the point of the following definition.

\begin{definition}[Erasable vertex]
A vertex $x$ of $G$ is called \emph{erasable} in $\mathcal{G}$ if and only if:
\begin{itemize}
\item $\theta(x)=0$;
\item for all $z \in V$ such that $(z,x) \in E$, we have $\theta(z) \neq 0$.
\end{itemize}
\end{definition}

We now introduce the \emph{peeling process}, which is indeed a generalisation of the process on finite posets presented in the introduction of this section.

\begin{definition}[Peeling process and peeling sequences]\label{DefPeelSeq}
Given $\G=(G,\theta)$ a valued digraph, we construct recursively two sequences: a sequence $L=[x_1,x_2,\ldots]$ of elements of $V$, and a sequence $(\G_i=(G_i,\theta_i))_{1\leq i}$ of valued digraphs as follows. 
\begin{enumerate}
\item Let $\G_1=\G$.
\item If there is not any erasable vertex in $\G_i$, the process stop. Otherwise, choose $x$ a vertex of $G_i$ which is erasable in $\G_i$, and set $x_i=x$.
\begin{enumerate}
\item Let $G_{i+1}$ be the simple acyclic directed graph obtained by removing the vertex $x_i$ in $G_i$ and all the arcs of the form $(z,x_i)$ or $(x_i,z)$ in $G_i$.
\item Let $\theta_{i+1}$ be the OCV on $G_{i+1}$ such that $\theta_{i+1}(y)=\theta_{i}(y)-1$ if $(y,x)$ is an arc of $G_i$, and $\theta_{i+1}(y)=\theta_i(y)$ otherwise. Then set $\G_{i+1}=(G_{i+1},\theta_{i+1})$ and iterate Step 2.
\end{enumerate}
\end{enumerate}
A sequence $L$ coming from this process is called a \emph{peeling sequence} of $\G$, and we denote by $PS(\G)$ the set of all peeling sequences of $\G$.
\end{definition}

Recall that the lower sets of any finite poset $\Pp$ are the initial sections of some linear extension $\Pp$, and we can extend this notion to valued digraph in the natural way.

\begin{definition}
Let $L=[x_1,x_2,\ldots]$ be a peeling sequence of $\G$. The \emph{initial sections} of $L$ are the sets of the form $\{x_1,x_2\ldots,x_k \}$, $k\in \mathbb{N}^*$. By convention, $\emptyset$ is an initial section of $L$. The set of the initial sections of all the peeling sequences of $\G$ will be denoted by $IS(\G)$.
\end{definition}

Finally, recall that the lower sets of any finite poset gives rise, once ordered by inclusion, to a classical lattice called its down-set lattice.
Once again, this concept naturally generalizes to valued digraphs, and the posets we will consider all along this paper are the posets $(IS(\G),\subseteq)$ for some valued digraphs $\G$.

\begin{example}
Consider $\G$ as depicted in the upper left corner of Figure~\ref{FigFPSACex}. The peeling sequences of $\G$ are $L_1=[a,c,b]$ and $L_2=[b,c,a]$, thus $$IS(\G)=\{ \emptyset, \{a \},\{b \},\{a,c \},\{b,c \},\{a,b,c \} \}.$$
\begin{figure}[!h]
\includegraphics[scale=1]{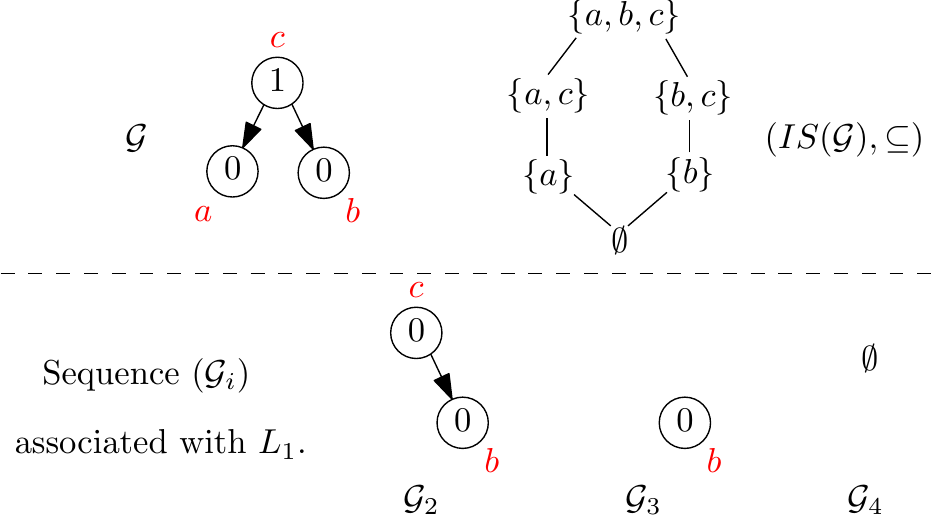}
\caption{}\label{FigFPSACex}
\end{figure}
\end{example}

We finish this section with stating our main results concerning the properties of $(IS(\G),\subseteq)$. The proofs are given in Section~\ref{SectionProof}.

\begin{Theorem}\label{Lattice}
Let $\G=(G,\theta)$ be a valued digraph, the poset $(IS(\G),\subseteq)$ is a graded complete meet semi-lattice, and its rank function is $\rho : A\rightarrow |A|$. Moreover, if $G$ is finite, then $(IS(\G),\subseteq)$ is a complete lattice.
\end{Theorem}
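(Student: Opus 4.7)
I begin by establishing a key preliminary: for any $A \in IS(\G)$ the valued digraph $\G_A$ obtained after peeling $A$ is independent of the peeling order, with explicit formula
\[
\theta_A(y) \;=\; \theta(y) \;-\; \bigl|\{\,x\in A : (y,x)\in E\,\}\bigr|\qquad (y\in V\setminus A)
\]
obtained by unwinding the recursion of Definition~\ref{DefPeelSeq}. A direct check confirms that $\theta_A$ is again an OCV on $G_A$, which immediately yields the local description of covers: $A\lessdot B$ in $(IS(\G),\subseteq)$ iff $B = A\cup\{x\}$ for some $x$ erasable in $\G_A$, so every cover increases cardinality by one.

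I would next prove the central local characterization: $A\in IS(\G)$ if and only if $\theta(x)\leq|\{y:(x,y)\in E,\,y\in A\}|$ for every $x\in A$ (condition~(C1)) and $\theta(y)\geq|\{z:(y,z)\in E,\,z\in A\}|$ for every $y\in V\setminus A$ (condition~(C2)). Necessity follows by tracking the valuation during any peeling of $A$: for $y \notin A$, if $m$ children of $y$ lie in $A$ then $\theta(y)$ takes the values $\theta(y),\theta(y)-1,\ldots,\theta(y)-(m-1)$ just before the successive peelings of these children, each of which must be nonzero by the parent condition of erasability, forcing $\theta(y)\geq m$; the bound for $x\in A$ comes from $\theta(x)$ having to drop from its initial value to zero before $x$ is peeled. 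For sufficiency, the set $K=\{x\in A : \theta(x)=0\}$ is non-empty (every sink of the subgraph induced on $A$ lies in $K$ by (C1)), and any source $s$ of the subgraph induced on $K$ is erasable in $\G$: its parents outside $A$ are handled by (C2), its parents in $A\setminus K$ have positive $\theta$ by definition, and $s$ has no parent in $K$ by the choice of source; a short verification shows (C1) and (C2) are preserved for $A\setminus\{s\}$ in $\G_{\{s\}}$, and the induction on $|A|$ concludes.

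With the characterization in hand, the gradedness lemma---if $A\subseteq B$ are both in $IS(\G)$, then $B\setminus A\in IS(\G_A)$---reduces to a few lines: substituting $\theta_A(y) = \theta(y) - |\text{children of }y\text{ in }A|$ into (C1) and (C2) for $B\setminus A$ in $\G_A$ yields exactly (C1) and (C2) for $B$ in $\G$, both of which hold by assumption. This produces saturated chains of length $|B|-|A|$ from $A$ to $B$, so $\rho(A)=|A|$ is the rank function. For the infimum of a family $\{A_i\}_{i\in I}\subseteq IS(\G)$, let $S=\bigcap_i A_i$ and iteratively remove from $S$ any vertex $x$ with $\theta(x)$ strictly greater than the number of its children in the current set, stopping when no such $x$ remains; call the result $M$. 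Condition (C1) holds on $M$ by construction; (C2) holds for $y \in S \setminus M$ because its removal at some stage witnessed $\theta(y)$ strictly exceeding a children-count at least $|\text{children of }y\text{ in }M|$, and for $y \notin S$ it holds because $y \notin A_i$ for some $i$, whence $\theta(y) \geq |\text{children of }y\text{ in }A_i| \geq |\text{children of }y\text{ in }M|$. For maximality, if $N\in IS(\G)$ is any lower bound then an induction on the iteration shows $N\subseteq N_k$ at every stage, since (C1) for $N$ precludes any $x\in N$ from ever being a valid removal candidate; hence $N\subseteq M$, so $M$ is the infimum.

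Finally, in the finite case an induction on $|V|$ shows $V\in IS(\G)$: the set $\{v:\theta(v)=0\}$ is non-empty (every sink has $\theta=0$ by the OCV condition), finite and acyclic, so it has a source in its induced subgraph, which is erasable in $\G$; peeling it preserves OCV, and the induction hypothesis completes the peeling of the rest. Therefore $V$ is the top element of $(IS(\G),\subseteq)$, and combined with the complete meet semi-lattice structure this upgrades to a complete lattice, joins being realized as meets of upper bounds. The main obstacle in the plan is the sufficiency part of the local characterization in the second paragraph, where one must correctly identify the candidate set $K$ of erasable vertices and verify that a source of the subgraph induced on $K$ is both erasable in $\G$ and preserves (C1) and (C2) upon peeling; once that is in place, every remaining step in the proof is a short computation with the two inequalities.
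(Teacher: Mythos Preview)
Your proof is correct and shares its backbone with the paper: the local characterization you call (C1)/(C2) is exactly the paper's Proposition~\ref{Technic}, and your gradedness argument (showing $B\setminus A\in IS(\G_A)$) is the paper's Proposition~\ref{PropGradedPoset}.

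The meet construction is where you genuinely diverge. The paper builds the infimum \emph{from below}: starting from $\emptyset$, it repeatedly peels an erasable vertex lying in $X=\bigcap_i A_i$ (using Lemma~\ref{TechLem} to guarantee one exists whenever some $x\in X$ has $\theta(x)=0$), and the resulting set $C$ is shown to contain every lower bound. You instead work \emph{from above}: starting from $S=\bigcap_i A_i$, you iteratively discard (C1)-violators until none remain. Your approach is self-contained once the characterization is in hand and avoids isolating an analogue of Lemma~\ref{TechLem}; the paper's approach has the advantage of producing the meet directly as an initial section of an explicit peeling sequence, which ties more visibly to the definition of $IS(\G)$. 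Both arguments hinge on the same monotonicity (any lower bound survives every stage of the process), just run in opposite directions.

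One minor point: in the finite case you prove $V\in IS(\G)$ by inductively locating an erasable vertex, but the paper observes that this is immediate from the characterization---condition (C2) is vacuous and (C1) is just the OCV inequality $\theta(x)\le d^+(x)$. Your argument is correct, but the shortcut is worth noting.
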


We also have an explicit formula for the values of the M\"obius function of the poset $(IS(\G),\subseteq)$.
For the sake of clarity, we give the formula only for the couples of the form $(\emptyset,A)$, $A \in IS(\G)$, but a similar one can be stated for all couples in $IS(\G)$.

\begin{Theorem}\label{mobius}
Let $A \in IS(\G)$, $\mathcal{N}(A)=\{x \in A \ | \ \theta(x)=0 \}$ and $\mathcal{F}(A)=\{x \in A \ | \ A\setminus \{x\} \in IS(\G) \}$, we have two cases.
\begin{enumerate}
\item If $\mathcal{F}(A)=\mathcal{N}(A)$, then $\mu(\emptyset,A)=(-1)^{|\mathcal{N}(A)|}$.
\item Otherwise, $\mu(\emptyset,A)=0$.
\end{enumerate}
\end{Theorem}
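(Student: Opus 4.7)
The plan is to prove the formula by invoking Rota's cross-cut theorem on the interval $[\emptyset,A]$, which is a finite lattice by Theorem~\ref{Lattice}. After disposing of the base cases $|A|\leq 1$ by direct verification, I would take as the cross-cut the set of coatoms of $[\emptyset,A]$: by gradedness (again Theorem~\ref{Lattice}) these coatoms are precisely $\{A\setminus\{x\}:x\in\mathcal{F}(A)\}$, and they form an antichain meeting every maximal chain from $\emptyset$ to $A$.

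Rota's theorem then expresses $\mu(\emptyset,A)$ as an alternating sum over those $T\subseteq\mathcal{F}(A)$ for which the associated family of coatoms has join $A$ and meet $\emptyset$. The join condition is automatic once $|T|\geq 2$, since $\bigcup_{x\in T}(A\setminus\{x\})=A$ already at the level of subsets. Writing $X_0:=\{y\in A:\{y\}\in IS(\G)\}$ for the set of atoms of $[\emptyset,A]$ (equivalently, the initially erasable vertices inside $A$), the meet $\bigwedge_{x\in T}(A\setminus\{x\})$ is the largest element of $IS(\G)$ contained in $A\setminus T$, and hence vanishes exactly when $X_0\subseteq T$. A standard binomial cancellation then collapses the sum to $(-1)^{|X_0|}$ when $\mathcal{F}(A)=X_0$ and to $0$ otherwise.

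It remains to identify this dichotomy with the one in the theorem, that is, to show the equivalence $\mathcal{F}(A)=\mathcal{N}(A)\iff \mathcal{F}(A)=X_0$, from which $|X_0|=|\mathcal{N}(A)|$ in the positive case. For $(\Rightarrow)$, if $y\in\mathcal{N}(A)\setminus X_0$, then $y$ admits an in-neighbor $z$ with $\theta(z)=0$; necessarily $z\in A$ (else $z$ is a permanent obstruction to the erasure of $y$ in any peeling of $A$, contradicting $A\in IS(\G)$), so $z\in\mathcal{N}(A)=\mathcal{F}(A)$; and peeling $A\setminus\{z\}$ then $z$ (legitimate by gradedness) forces $\theta(z)$ to equal the number of out-neighbors of $z$ lying in $A$, which is $0$, contradicting the existence of $y$ as such an out-neighbor. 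For $(\Leftarrow)$, the same analysis applied iteratively produces, from any hypothetical $y\in\mathcal{N}(A)\setminus X_0$, an in-neighbor $y_1\in\mathcal{N}(A)\setminus X_0$ (if instead $y_1\in X_0=\mathcal{F}(A)$, the forward contradiction would already apply at $y_1$), then $y_2$, and so on, yielding an arbitrarily long directed path inside the finite acyclic subgraph induced on $A$, which is impossible. I expect this iterated in-neighbor chain, and in particular the verification that each $y_i$ remains in $\mathcal{N}(A)\setminus X_0$, to be the main technical obstacle; the cross-cut reduction and the binomial cancellation are essentially routine.
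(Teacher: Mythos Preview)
Your approach is correct and its skeleton coincides with the paper's: both reduce $\mu(\emptyset,A)$ to an alternating sum over those $T\subseteq\mathcal{F}(A)$ for which $\bigwedge_{x\in T}(A\setminus\{x\})=\emptyset$, and then collapse that sum by binomial cancellation. The paper derives this identity by a bare inclusion--exclusion on $[\emptyset,A]\setminus\{A\}=\bigcup_{x\in\mathcal{F}(A)}[\emptyset,A\setminus\{x\}]$ rather than by citing Rota's cross-cut theorem by name, but the mechanism is identical.

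Where you genuinely diverge is in how you characterize the vanishing of that meet. You use the atom set $X_0$, which is natural from the lattice point of view but then forces the extra final step of proving $\mathcal{F}(A)=\mathcal{N}(A)\Longleftrightarrow\mathcal{F}(A)=X_0$. The paper instead shows directly (its Lemma~\ref{LemMob}, resting on Lemma~\ref{TechLem} and the explicit meet construction) that the meet of $\{A\setminus\{x\}:x\in T\}$ is $\emptyset$ if and only if $\mathcal{N}(A)\subseteq T$; the formula then drops out already in the form stated in the theorem, with no translation layer needed. Your iterated in-neighbor argument for the equivalence is correct, but if you look at it closely, it is essentially a specialized reproof of Lemma~\ref{TechLem}: an element of $\mathcal{N}(A)$ lying in an intersection of initial sections forces an erasable vertex there. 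Had you invoked that lemma up front to get the $\mathcal{N}(A)$-characterization of the vanishing meet, what you flag as the ``main technical obstacle'' would simply disappear. So your route is sound but takes a detour the paper avoids.
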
 

\section{Proofs}\label{SectionProof}

In this section, we provide the proofs of Theorems~\ref{Lattice} and \ref{mobius}. Both proofs rely on an intrinsic characterization of the elements of $IS(\G)$ (Proposition~\ref{Technic}) and on a technical lemma (Lemma~\ref{TechLem}), which we give here.

\begin{proposition}\label{Technic}
Let $\G=(G,\theta)$ be a valued digraph and $A$ be a finite subset of vertices of $G$, then $A \in IS(\G)$ if and only if:
\begin{enumerate}
\item[(1)] for all $x\in A$, $\theta(x) \leq |\{ \ y \ | \ y\in A \ {\rm and} \ (x,y) \in E \}|$;
\item[(2)] for all $x \in V \setminus A$, $\theta(x) \geq |\{ \ y \ | \ y\in A \ {\rm and} \ (x,y) \in E \}|$.
\end{enumerate}
\end{proposition}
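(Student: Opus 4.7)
The plan is to split the proof into two directions: the ``only if'' direction follows directly from unwinding the peeling process, and the ``if'' direction is where the substance lies.

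For the forward direction, I would suppose $A\in IS(\G)$ appears as the initial section $\{y_1,\dots,y_k\}$ of a peeling sequence $[y_1,y_2,\dots]$ in that order of erasure. Unfolding the update rule for $\theta_i$ yields $\theta_j(y_j)=\theta(y_j)-|\{i<j:(y_j,y_i)\in E\}|$, which equals $0$ because $y_j$ is erasable in $\G_j$; since $G$ has no loops, the set $\{i<j:(y_j,y_i)\in E\}$ is contained in $\{y\in A\setminus\{y_j\}:(y_j,y)\in E\}$, yielding condition (1). For condition (2), the OCV constraint $\theta_{k+1}(z)\ge 0$ on the valued digraph $\G_{k+1}$ rewrites as $\theta(z)\ge|\{y\in A:(z,y)\in E\}|$ for every $z\in V\setminus A$.

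For the reverse direction I would argue by induction on $|A|$, with the base case $A=\emptyset$ immediate from the convention. In the inductive step the key task is to exhibit an erasable vertex $x\in A$; once such an $x$ is found, I would let $\G'$ be the valued digraph obtained from $\G$ by peeling $x$ and run a case analysis on whether $(y,x)\in E$, observing that removing $x$ decreases $\theta(y)$ and $|\{z\in A:(y,z)\in E\}|$ by the same amount, so both (1) and (2) pass to $A\setminus\{x\}$ with respect to $\G'$. The inductive hypothesis then furnishes a peeling sequence of $\G'$ with initial section $A\setminus\{x\}$, and prepending $x$ yields the desired peeling sequence of $\G$ with initial section $A$.

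To produce the erasable vertex I would set $A_0=\{y\in A:\theta(y)=0\}$. Any sink of the induced sub-DAG on $A$ lies in $A_0$ by condition (1) (a sink has no successor in $A$, forcing $\theta\le 0$ and hence $=0$), so $A_0\neq\emptyset$. Pick $x$ to be a source of the finite acyclic induced sub-DAG on $A_0$; then $\theta(x)=0$ and every predecessor $z$ of $x$ in $G$ has $\theta(z)>0$, for one of three reasons: if $z\in V\setminus A$ then condition (2) yields $\theta(z)\ge|\{y\in A:(z,y)\in E\}|\ge 1$ with $x$ itself as witness; if $z\in A\setminus A_0$ then $\theta(z)>0$ by definition of $A_0$; and $z\in A_0$ is impossible by the choice of $x$ as a source of the sub-DAG on $A_0$. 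Thus $x$ is erasable in $\G$. The main obstacle I anticipate is precisely this selection of $x$: neither a sink of the induced sub-DAG on $A$ nor an arbitrary vertex with $\theta=0$ needs to be erasable in $\G$, and the two-stage restriction---first to the zero-valuation set $A_0$, then to a source within it---is exactly what makes all three cases for predecessors simultaneously work out.
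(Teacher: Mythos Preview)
Your proof is correct and follows essentially the same approach as the paper: both directions match, and in the reverse direction the paper also first observes (via a sink of the induced DAG on $A$) that some vertex has valuation zero, then selects a source among those zero-valuation vertices and checks erasability by splitting predecessors into the three cases you list. Your write-up is if anything more explicit---you name the set $A_0$ and phrase the selection as ``source of the induced sub-DAG on $A_0$,'' whereas the paper leaves this implicit in the phrase ``since $G$ is acyclic and $A$ is finite''---but the argument is the same.
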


\begin{proof}
Assume that $A \in IS(\G)$, \emph{i.e.} there exists $L=[x_1,x_2,\ldots]\in PS(\G)$ and $k \in \mathbb{N}^*$ such that $\{x_1,\ldots,x_k \}=A$. Let $x$ be a vertex of $G$, then we divide our study into two cases. 
\begin{itemize}
\item If $x \in A$, then there exists $i \leq k$ such that $x_i=x$, and we obviously have $$\{ \ x_j \ | \ j<i \ {\rm and} \ (x_i,x_j) \in E \} \subseteq \{ \ y \ | \ y\in A \ {\rm and} \ (x_i,y) \in E \}.$$ Furthermore, by definition of the peeling process, we have $\theta(x_i)=|\{ \ x_j \ | \ j<i \ {\rm and} \ (x_i,x_j) \in E \}|$. Hence, $x$ satisfies point (1).
\item If $x \notin A$, then set $\G_i=(G_i,\theta_i)$ the sequence associated with $L$. By definition of the peeling process, we have $\theta_{k+1}(x)=\theta(x)-|\{ \ y \ | \ y\in A \ {\rm and} \ (x,y) \in E \}|\geq 0$. Hence, $x$ satisfies Point (2).
\end{itemize}

Conversely, assume that $A$ satisfies both points (1) and (2). We will prove that $A \in IS(\G)$ recursively on $k:=|A|$. If $k=1$, then set $x$ the vertex of $G$ such that $A=\{ x\}$. We have $\theta(x)=0$ thanks to Point (1), and Point (2) implies that for all $y \neq x$ such that $(y,x) \in E$ we have $\theta(y) \geq 1>0$. Hence, $x$ is erasable in $\G$, so that $A \in IS(\G)$. Let $k$ be such that the property is true and assume that $|A|=k+1$. We first prove that there exists a vertex in $A$ which is erasable. Since $G$ is acyclic and $A$ is finite, there exists $z \in A$ such that for all $y \in A$, $(z,y) \notin E$. Then, by Point (1), we have $\theta(z)=0$. Again, since $G$ is acyclic and $A$ is finite, there exists $x$ in $A$ such that $\theta(x)=0$ and for all $y \in A$, if $(y,x)\in E$ then $\theta(y)>0$. Furthermore, for all $z \in A$ such that $\theta(z)=0$, if there exists $y \in V \setminus A$ such that $(y,z) \in E$, then $\theta(y) > 0$ by Point (2). Consequently, this vertex $x$ is erasable and can be peeled at the first step of the peeling process. Thus, if we set $\G'$ the valued digraph obtained with the peeling process after we peeled the vertex $x$, then $|A\setminus \{x\}|=n$ and $A \setminus \{ x \}$ clearly satisfies Points (1) and (2) in $\G'$. Therefore, by induction we have $A \in IS(\G)$. 
\end{proof}

We finish with a technical lemma.

\begin{lemma}\label{TechLem}
Let $S \subseteq IS(\G)$ and denote by $X$ the set $\displaystyle{\bigcap_{A \in S}A}$. If there exists $x \in X$ such that $\theta(x)=0$, then there exists $z \in X$ which is erasable in $\G$.
\end{lemma}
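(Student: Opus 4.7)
The plan is to argue by a sort of upstream walk: starting from the given $x \in X$ with $\theta(x)=0$, either $x$ itself is erasable (and we are done), or we can find a predecessor with valuation $0$ that still lies in $X$, and then iterate. The main point is showing (a) that the predecessor we locate is again forced into $X$, and (b) that the walk must stop.

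For step (a), suppose $x \in X$ has $\theta(x)=0$ but is not erasable in $\G$. Then by definition there is some $w \in V$ with $(w,x)\in E$ and $\theta(w)=0$. I claim $w \in X$. Fix any $A \in S$; then $x \in A$ (since $x \in X \subseteq A$). If $w$ were not in $A$, Proposition~\ref{Technic}(2) would give
\[
\theta(w) \;\geq\; \bigl|\{y \in A \mid (w,y)\in E\}\bigr| \;\geq\; 1,
\]
contradicting $\theta(w)=0$. Hence $w \in A$ for every $A \in S$, i.e.\ $w \in X$, and moreover $\theta(w)=0$. So the hypothesis of the lemma is preserved at $w$, and we can apply the same dichotomy to $w$.

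For step (b), iterating yields a sequence $x=x_0, x_1, x_2, \ldots$ of vertices in $X$ with $\theta(x_i)=0$ and $(x_{i+1},x_i) \in E$ for all $i$. I note the $x_i$'s are pairwise distinct: a repetition $x_i = x_j$ with $i<j$ would produce the directed cycle $x_j \to x_{j-1} \to \cdots \to x_{i+1} \to x_i = x_j$ in $G$, contradicting acyclicity. On the other hand, taking any $A \in S$ (the lemma is vacuous when $S=\emptyset$ unless we read the intersection as $V$, in which case we reduce to the finite case by starting the argument inside a single initial section), we have $X \subseteq A$, and $A$ is finite since it is an initial section of a peeling sequence. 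Hence $X$ is finite, the sequence $(x_i)$ cannot go on forever, and the process must halt at some $x_n$ which is erasable. This $x_n$ is the desired $z \in X$.

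The only delicate point is the termination argument, which hinges on combining acyclicity of $G$ (to force distinctness along the backward walk) with finiteness of the initial sections in $S$ (to force finiteness of $X$). Everything else is a direct application of the intrinsic characterization in Proposition~\ref{Technic}.
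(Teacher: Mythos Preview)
Your proof is correct and follows essentially the same approach as the paper: both use Proposition~\ref{Technic}(2) to show that any valuation-zero predecessor of a vertex in $X$ must itself lie in $X$, and then invoke finiteness of $X$ together with acyclicity of $G$ to force termination. The only cosmetic difference is that the paper phrases the termination step as a one-line contradiction (``if every such vertex had a valuation-zero predecessor in $X$, finiteness would yield a cycle''), whereas you spell out the explicit backward walk; the underlying argument is identical.
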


\begin{proof}
Let $z \in X$ be such that $\theta(z) =0$. For all $y \notin X$, there exists $B \in S$ such that $y \notin B$. Therefore, if $(y,z)\in E$, then by Proposition~\ref{Technic} we have $\theta(y) \geq 1$. Assume by contradiction that for all $x \in X$ such that $\theta(x)=0$, there exists $y \in X$ such that $\theta(y)=0$ and $(y,x) \in E$. Since $X$ is finite, this implies that there is a cycle in $G$, which is absurd. Hence, there exists an erasable vertex of $\G$ in $X$, and this ends the proof. 
\end{proof}

\subsection{Proof of Theorem~\ref{Lattice}}

We divide the proof of Theorem~\ref{Lattice} into two distinct steps. First, we prove that $(IS(\G),\subseteq)$ is a graded poset (Proposition~\ref{PropGradedPoset}). Then, we prove that it is a meet semi-lattice (Corollary~\ref{CoroMeet}), constructing explicitly the infimum of any subset of $IS(\G)$. 

Let us begin with a proposition, which immediately implies that $(IS(\G),\subseteq)$ is graded.

\begin{proposition}\label{PropGradedPoset}
Let $A$ and $B$ be two elements of $IS(\G)$, and denote by $k$ and $q$ the cardinality of $A$ and $B$, respectively. If $A \subseteq B$, then there exists $L=[x_1,x_2,\ldots]\in PS(\G)$ such that $A=\{x_1,\ldots,x_k \}$ and $B=\{x_1,\ldots,x_q \}$. Consequently, $(IS(\G), \subseteq)$ is graded with rank function $A \mapsto |A|$.
\end{proposition}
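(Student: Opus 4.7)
The plan is to reduce the statement to the intrinsic characterization given by Proposition~\ref{Technic}, which lets us sidestep any direct manipulation of peeling sequences. Concretely, start with any peeling sequence realizing $A$, say $[x_1,\dots,x_k]$ with $\{x_1,\dots,x_k\}=A$, which exists because $A\in IS(\G)$. Let $\G'=(G',\theta')$ be the valued digraph $\G_{k+1}$ obtained after peeling $A$ off, so that $G'$ is the induced subdigraph on $V\setminus A$ and $\theta'(y)=\theta(y)-|\{a\in A:(y,a)\in E\}|$ for every $y\in V\setminus A$. The key claim is

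\begin{center}
$B\setminus A\in IS(\G')$.
\end{center}

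I would prove this claim by checking the two conditions of Proposition~\ref{Technic} for $B\setminus A$ inside $\G'$, using that the same conditions hold for $B$ inside $\G$. For $x\in B\setminus A$, condition (1) applied to $B$ in $\G$ gives $\theta(x)\le|\{y\in B:(x,y)\in E\}|$, and since $A\subseteq B$ this set splits as $\{y\in A:(x,y)\in E\}\sqcup\{y\in B\setminus A:(x,y)\in E\}$; subtracting the cardinality of the first part from both sides yields exactly condition (1) for $x$ in $\G'$. For $x\in V\setminus B$, an identical bookkeeping using condition (2) for $B$ in $\G$ gives condition (2) for $x$ in $\G'$. Note that here I never touch the erasability of individual vertices; the intrinsic criterion does all the work.

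Once the claim is established, by the definition of $IS(\G')$ there is a peeling sequence $[y_1,y_2,\dots]$ of $\G'$ whose initial section of length $q-k$ equals $B\setminus A$. The concatenation $[x_1,\dots,x_k,y_1,y_2,\dots]$ is then a peeling sequence of $\G$ because the peeling process on $\G$ after step $k$ is literally the peeling process on $\G'$: the erasability condition at each subsequent step depends only on the current valued digraph, which is $\G'$ (then the successive $\G'_i$'s). This sequence realizes $A$ as its initial section of length $k$ and $B$ as its initial section of length $q$, which is the first assertion.

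The gradedness and the explicit rank function then follow formally. Given a saturated chain $A_0\subsetneq A_1\subsetneq\cdots\subsetneq A_m$ in $(IS(\G),\subseteq)$, applying the first assertion iteratively to the pairs $(A_i,A_{i+1})$ produces a single peeling sequence whose initial sections of lengths $|A_0|,|A_1|,\dots,|A_m|$ are exactly $A_0,\dots,A_m$. Since the cardinalities along a peeling sequence increase by exactly one at each step, saturation of the chain forces $|A_{i+1}|=|A_i|+1$, so every saturated chain between $A$ and $B$ has length $|B|-|A|$, proving that $(IS(\G),\subseteq)$ is graded with rank function $A\mapsto|A|$. The only real content is the verification of the claim above; the rest is formal manipulation, and I expect the main obstacle to be setting up the cardinality bookkeeping in that verification carefully enough that the transition from $(G,\theta)$ to $(G',\theta')$ is transparent.
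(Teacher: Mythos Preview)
Your proof is correct and follows essentially the same route as the paper: peel off $A$, verify via Proposition~\ref{Technic} that $B\setminus A$ lies in $IS(\G_{k+1})$ by the same set-splitting bookkeeping you describe, then concatenate peeling sequences. Your final paragraph on gradedness is slightly more elaborate than necessary (applying the first assertion once to the pair $A\subsetneq B$ already yields intermediate initial sections of every cardinality between $|A|$ and $|B|$), but it is sound.
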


\begin{proof}
Since the case $k=q$ is obvious, we assume that $k<q$. Let us now perform the peeling process on $\G$, in order to construct the claimed peeling sequence. Since $A \in IS(\G)$, we begin with constructing the sequence $L$ by peeling the first $k$ elements $x_1,\ldots,x_k$ in $A$. Consequently, $A=\{ x_1, \ldots , x_k \}$. 

Let us now consider the valued digraph $\G_{k+1}=(G_{k+1},\theta_{k+1})$ coming from the peeling process after we peeled $x_1,\ldots,x_k$. By definition of $\G_{k+1}$, $C=B \setminus A $ is a subset of vertices of $G_{k+1}$. We will prove that $C$ is in $IS(\G_{k+1})$ checking that $C$ satisfies both Points (1) and (2) of Proposition~\ref{Technic}. By construction, for all $z \in C$ we have $\theta_{k+1}(z)=\theta(z)-|\{ y \in A \ | \ (z,y) \in E \}|$, and 
$$\{ y \in C \ | \ (z,y) \in E \}= \{  y \in B \ | \ (z,y) \in E \} \setminus \{  y \in A \ | \ (z,y) \in E \}.$$
Since $B \in IS(\G)$, we have by Proposition~\ref{Technic} that $\theta(z) \leq |\{  y \in B \ | \ (z,y) \in E \}|$, so that $$\theta_{k+1}(z) \leq |\{  y \in B \ | \ (z,y) \in E \}|-|\{ y \in A \ | \ (z,y) \in E \}|=|\{ y \in C \ | \ (z,y) \in E \}|.$$ Then, $C$ satisfies Point (1) of Proposition~\ref{Technic}. Using similar arguments, we show that $C$ also satisfies Point (2) of Proposition~\ref{Technic}. Hence, there exists a peeling sequence $[x_{k+1},x_{k+2},\ldots]$ of $\G_{k+1}$ such that $C=\{ x_{k+1}, \ldots , x_q\}$, and finally, the sequence $L=[x_1,x_2,\ldots]$ is a peeling sequence of $\G$ such that $A=\{x_1,\ldots , x_k \}$ and $B=\{x_1,\ldots , x_q \}$. This ends the proof of the proposition.
\end{proof}

We end the proof of Theorem~\ref{Lattice} showing that $(IS(\G),\subseteq)$ is a meet semi-lattice. For that purpose,  we construct explicitly the infimum (also called the \emph{meet}) of a set $S \subseteq IS(\G)$. \\

\noindent {\bf Construction of the meet.}
Let $S$ be a subset of $IS(\G)$ and $X$ be the intersection of all the elements of $S$, we will construct recursively a set $C \in IS(\G)$ as follows. 

If for all $x \in X$, $\theta(x) \neq 0$, we set $C=\emptyset$. Otherwise, let $z_1 \in X$ be an erasable vertex of $\G$ and start the peeling process by peeling this vertex. We denote by $\G_2=(G_2,\theta_2)$ the obtained valued digraph. Then, for all $A \in S$, we have $A\setminus \{ z_1\} \in IS(\G_2)$. Therefore, we can again apply Lemma~\ref{TechLem} to $X\setminus \{z_1\}$ seen as a subset of vertices of $G_2$: if for all $x \in X\setminus \{z_1\}$ we have $\theta_2(x)\neq 0$, then we set $C=\{ z_1 \}$; otherwise, let $z_2\in X\setminus \{z_1\}$ be an erasable vertex of $\G_2$ and perform the peeling process peeling this $z_2$ in $\G_2$. We repeat this procedure until there is not any erasable vertex left (this process always ends, since $X$ is finite), and we set $C$ the resulting set. By construction, $C \in IS(\G)$. \\

At first glance, this set $C$ does not appear to be well defined, and seems to depend heavily on the choices of vertices made at each step of its construction. The next proposition shows that is not the case.

\begin{proposition}\label{PropVerifMeet}
Let $A \in IS(\G)$. If $A \subseteq X$, then $A \subseteq C$.
\end{proposition}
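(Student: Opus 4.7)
The plan is to argue by contradiction, assuming $A\setminus C\neq\emptyset$, and to exhibit a vertex of $X\setminus C$ whose valuation in the terminal valued digraph $\G_{n+1}$ (obtained from $\G$ by peeling off the elements of $C$ one by one in the construction above) is zero. Via Lemma~\ref{TechLem} this produces an erasable vertex of $\G_{n+1}$ inside $X\setminus C$, which directly contradicts the criterion under which the recursive construction of $C$ terminated.

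First I would extract the candidate vertex from $A\setminus C$ itself. Since $A\setminus C$ is a finite subset of the acyclic digraph $G$, there exists $x\in A\setminus C$ with $(x,y)\notin E$ for every $y\in A\setminus C$, i.e.\ a ``sink'' of $A\setminus C$ in $G$. Applying point~(1) of Proposition~\ref{Technic} to $A$ at $x$ gives $\theta(x)\le |\{y\in A:(x,y)\in E\}|$, and the sink property identifies this set with $\{y\in A\cap C:(x,y)\in E\}$, which is contained in $\{z\in C:(x,z)\in E\}$. An immediate induction on the peeling steps leading to $\G_{n+1}$ yields
\[
\theta_{n+1}(x)=\theta(x)-|\{z\in C:(x,z)\in E\}|,
\]
so combining the two inequalities gives $\theta_{n+1}(x)\le 0$, and nonnegativity of the valuation forces $\theta_{n+1}(x)=0$.

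Because $A\subseteq X$, the vertex $x$ lies in $X\setminus C$. To apply Lemma~\ref{TechLem} in $\G_{n+1}$ I would note that $C\subseteq X\subseteq B$ for every $B\in S$, so Proposition~\ref{PropGradedPoset} ensures that each $B\setminus C$ lies in $IS(\G_{n+1})$; then $X\setminus C=\bigcap_{B\in S}(B\setminus C)$ is the intersection of a family in $IS(\G_{n+1})$, and $x$ is an element of this intersection of valuation zero. Lemma~\ref{TechLem} therefore produces an erasable vertex of $\G_{n+1}$ inside $X\setminus C$, contradicting the very stopping criterion of the construction of $C$. Hence $A\setminus C=\emptyset$, that is, $A\subseteq C$.

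The delicate step is the bookkeeping combining Proposition~\ref{Technic}(1) with the sink property: identifying $\{y\in A:(x,y)\in E\}$ as a subset of $C$ is what makes the upper bound on $\theta(x)$ compatible with the count of peeled neighbors of $x$, and this is precisely where the hypothesis $A\in IS(\G)$ is used. Everything else reduces to unwinding the definition of the peeling process and a direct application of the two tools, Proposition~\ref{Technic} and Lemma~\ref{TechLem}, established before the statement.
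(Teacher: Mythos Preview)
Your argument is correct and follows essentially the same line as the paper: both proofs exhibit a vertex of $A\setminus C\subseteq X\setminus C$ whose valuation in $\G_{n+1}$ vanishes, contradicting the stopping rule of the construction of $C$. The paper locates this vertex as the first entry $z_j$ of a peeling sequence of $A$ that falls outside $C$ (so that $\theta(z_j)$ equals, not just is bounded by, the number of out-neighbours already in $C$), whereas you pick a sink of $A\setminus C$ and use Proposition~\ref{Technic}(1); the two choices are interchangeable.

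One remark: your final paragraph invoking Proposition~\ref{PropGradedPoset} and Lemma~\ref{TechLem} to upgrade ``$\theta_{n+1}(x)=0$'' to ``there is an erasable vertex in $X\setminus C$'' is superfluous. The construction of $C$ terminates precisely when \emph{no} element of $X\setminus C$ has valuation zero in the current valued digraph, so having produced $x\in X\setminus C$ with $\theta_{n+1}(x)=0$ is already the desired contradiction. You can drop that last step entirely.
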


\begin{proof}
We split the proof into two cases.
\begin{itemize}
\item If for all $z \in X$ we have $\theta(z) \neq 0$, then, by definition of the peeling process, we have that $A = \emptyset=C$. 
\item If there exists $z \in X$ such that $\theta(z)=0$, then $C \neq \emptyset$. Thus there exists $L=[x_1,x_2,\ldots] \in PS(\G)$ and $k \in \mathbb{N}^*$ such that $C=\{x_1,\ldots,c_k \}$. Let us denote by $\G_i=(G_i,\theta_i)_i$ the sequence of valued digraphs associated with $L$ and let $L'=[z_1,z_2,\ldots]$ be in $PS(\G)$ such that $A=\{z_1,\ldots,z_{|A|}\}$. 

Assume by contradiction that $A \not\subset C$ and consider $j\leq|A|$ minimal such that $z_j \notin C$. We have that $z_j$ is a vertex of $G_{k+1}$ and, by minimality, for all $q<j$ there exists $1 \leq i_q \leq k$ such that $z_q=x_{i_q}$. 
Let us now compute the value of $\theta_{k+1}(z_j)$. By definition of the peeling process, we have
$$\theta(z_j)=|\{q<j \ | \ (z_j,z_q) \in E \}|=|\{q<j \ | \ (z_j,x_{i_q})\in E \}|.$$
However, for all $q<j$ we have $i_q \leq k$, so that
$$\theta_{k+1}(z_j)=\theta(z_j)-|\{ p \leq k \ | \ (z_j,x_p)\in E \}| \leq 0.$$
Thus, $\theta_{k+1}(z_j)=0$ and this is absurd by construction of $C$, hence $A \subseteq C$.
\end{itemize}
This ends the proof
\end{proof}

As $C \subseteq X$ by construction, Proposition~\ref{PropVerifMeet} implies the following corollary.

\begin{corollary}\label{CoroMeet}
The set $C$ is the infimum of $S$.
\end{corollary}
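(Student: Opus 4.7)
The plan is to package Corollary~\ref{CoroMeet} as the straightforward combination of three facts, two of which are already established and one that is built into the construction of $C$. What needs to be checked is (i) that $C$ belongs to $IS(\G)$, (ii) that $C$ is a lower bound of $S$ in the poset $(IS(\G),\subseteq)$, and (iii) that any other lower bound of $S$ is contained in $C$.

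First I would note (i) directly from the construction: $C$ is produced by performing a finite number of steps of the peeling process of $\G$, so $C$ is an initial section of some peeling sequence of $\G$, hence $C \in IS(\G)$. For (ii), the crucial observation is already flagged in the remark preceding the corollary: every vertex $z_i$ added to $C$ at stage $i$ is chosen inside the (decreasing) intersection $X \setminus \{z_1,\dots,z_{i-1}\}$, so $C \subseteq X = \bigcap_{A \in S} A$; in particular $C \subseteq A$ for every $A \in S$, which is exactly the statement that $C$ is a lower bound of $S$ in $(IS(\G),\subseteq)$.

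For (iii), I would invoke Proposition~\ref{PropVerifMeet} directly. Let $A \in IS(\G)$ be any lower bound of $S$, i.e.\ $A \subseteq B$ for every $B \in S$. Then $A \subseteq \bigcap_{B \in S} B = X$, and Proposition~\ref{PropVerifMeet} immediately gives $A \subseteq C$. Combined with (i) and (ii), this shows that $C$ is the greatest lower bound of $S$ in $(IS(\G),\subseteq)$, i.e.\ its infimum.

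There is no real obstacle here: the substantive content of the meet construction has been absorbed into Proposition~\ref{PropVerifMeet} (which settles the maximality of $C$ among lower bounds contained in $X$) and Lemma~\ref{TechLem} (which guarantees that erasable vertices exist inside $X$ at each stage, so that the recursive construction of $C$ is never prematurely blocked). The only thing that could go wrong would be a hidden dependence of $C$ on the sequence of erasable vertices chosen during the construction, but that ambiguity is resolved by the same Proposition~\ref{PropVerifMeet}: any two sets produced by the construction are each contained in the other.
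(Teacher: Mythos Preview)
Your proof is correct and follows exactly the same approach as the paper: the paper's argument is the single sentence ``As $C \subseteq X$ by construction, Proposition~\ref{PropVerifMeet} implies the following corollary,'' which is precisely your points (i)--(iii) unpacked. Your closing remark on well-definedness of $C$ also mirrors the paper's comment preceding Proposition~\ref{PropVerifMeet}.
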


Finally, note that if the underlying graph $G$ is finite, then we obviously have that $V$, the set of all the vertices of $G$, satisfies Points (1) and (2) of Proposition~\ref{Technic}. Consequently, $V \in IS(\G)$ and the poset $(IS(\G),\subseteq)$ is bounded. Thus, $(IS(\G),\subseteq)$ is a lattice since it is a meet semi-lattice (more precisely, it is a complete lattice), and this ends the proof of Theorem~\ref{Lattice}.

\subsection{Proof of Theorem~\ref{mobius}}

The proof of this formula is purely combinatorial, and is based on the well-known \emph{Inclusion-Exclusion Principle} (see $\mathsection$2 in \cite{S2}). We first introduce some notations: for all $A \in IS(\G)$, we denote by $[\emptyset,A]$ the set $\{B \in IS(\G) \ | \ \emptyset \subseteq B \subseteq A \}$.
Let $A$, $\mathcal{N}(A)$ and $\mathcal{F}(A)$ be as defined in Theorem~\ref{mobius}, for all $S \subseteq \mathcal{F}(A)$, let us denote by $A_S$ the infimum of $\{A\setminus \{x\} \ | \ x\in S \}\subseteq IS(\G)$.

We begin the proof with a technical lemma.

\begin{lemma}\label{LemMob}
Let $S \subseteq \mathcal{F}(A)$, we have $A_S \neq \emptyset$ if and only if $\mathcal{N}(A) \not\subset S$.
\end{lemma}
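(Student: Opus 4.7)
The core observation is that the intersection $X := \bigcap_{x \in S}(A\setminus\{x\}) = A\setminus S$, so the hypothesis ``there exists $x \in X$ with $\theta(x) = 0$'' appearing in Lemma~\ref{TechLem} is exactly the condition $\mathcal{N}(A)\not\subseteq S$. I therefore plan to translate the two directions of the equivalence into statements about the explicit meet construction from Section~\ref{SectionProof} applied to the family $\{A\setminus\{x\} : x \in S\}$, which lies in $IS(\G)$ precisely because $S \subseteq \mathcal{F}(A)$.

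For the implication $\mathcal{N}(A)\not\subseteq S \Rightarrow A_S \neq \emptyset$, I would pick some $x \in \mathcal{N}(A) \setminus S$; since $x \in A \setminus S = X$ and $\theta(x)=0$, this $x$ witnesses the hypothesis of Lemma~\ref{TechLem} for the family above. The lemma then produces a vertex $z \in X$ which is erasable in $\G$, and by the very definition of the meet construction $z$ is peeled as the first vertex $z_1$ of $C$. Hence $z_1 \in A_S$ and $A_S \neq \emptyset$.

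For the converse, I would argue by contraposition: assume $\mathcal{N}(A) \subseteq S$. Then $\mathcal{N}(A) \cap X = \emptyset$, so every $x \in X = A \setminus S$ satisfies $\theta(x) > 0$. Since an erasable vertex of $\G$ must in particular have valuation $0$, no vertex of $X$ is erasable in $\G$; by inspection of the meet construction, its first step immediately declares $C = \emptyset$, so $A_S = \emptyset$.

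Given Lemma~\ref{TechLem} and the explicit construction of the meet, the argument is essentially bookkeeping. The only point that requires a moment of care is noticing that the initial valuation $\theta$ alone governs both directions---there is no need to track the evolution of the $\theta_i$---because in one direction a single peeling step suffices to conclude, and in the other direction no peeling step occurs at all.
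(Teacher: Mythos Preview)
Your proposal is correct and follows essentially the same approach as the paper: both identify $\bigcap_{x\in S}(A\setminus\{x\}) = A\setminus S$, invoke Lemma~\ref{TechLem} to produce an erasable vertex in $A\setminus S$ when $\mathcal{N}(A)\not\subseteq S$, and read off $A_S=\emptyset$ from the first step of the meet construction when $\mathcal{N}(A)\subseteq S$. Your write-up is in fact more explicit than the paper's, which dispatches the converse direction with ``similar arguments''.
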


\begin{proof}
Obviously, $\displaystyle{\bigcap_{x \in S} (A\setminus\{x\})= A \setminus S}$. Thanks to Lemma~\ref{TechLem}, if $\mathcal{N}(A) \not\subset S$, then there exists $z \in A\setminus S$ which is erasable in $\G$, so that the infimum of $\{A\setminus \{x\} \ | \ x\in S \}$ is not $\emptyset$. The proof of the converse implication is based on similar arguments.
\end{proof}

An immediate consequence of the meet semi-lattice structure of $\Pp(\G)$ is that, for all $A$ and $B$ in $IS(\G)$, $[\emptyset,A]\cap[\emptyset,B]=[\emptyset,A \wedge B]$ where $A\wedge B$ is the infimum of $\{A,B\}$. This basic remark leads to the claimed formula.

First, we have $$\displaystyle[\emptyset,A]\setminus\{A\} =\bigcup_{x \in \mathcal{F}(A)}[\emptyset,A_{\{x\}}].$$ Then, by the Inclusion-Exclusion Principle, we have $$\left| [\emptyset,A]\setminus\{A\}\right| =\sum_{\emptyset \neq S\subseteq \mathcal{F}(A)}(-1)^{|S|+1}\ \left| \bigcap_{x \in S}[\emptyset,A_{\{x\}}] \right|=\sum_{\emptyset \neq S\subseteq \mathcal{F}(A)}(-1)^{|S|+1}\ \left| [\emptyset,A_{S}] \right|.$$
Once applied to the M\"obius function of $\Pp(\G)$, this gives rise to the following identity: 
\begin{equation}\label{eqMob}
\mu(\emptyset,A)=-\sum_{\emptyset \neq S\subseteq \mathcal{F}(A)}(-1)^{|S|+1}\sum_{B\in[\emptyset,A_S]}\mu(\emptyset,B).
\end{equation}
By definition of the M\"obius function, $\displaystyle \sum_{B\in[\emptyset,A_S]}\mu(\emptyset,B)=1$ if $A_S=\emptyset$, and 0 otherwise. Hence, thanks to Lemma~\ref{LemMob}, if $\mathcal{N}(A)\not\subset\mathcal{F}(A)$, then $\mu(\emptyset,A)=0$. Otherwise, Equation~(\ref{eqMob}) becomes
\begin{align*}
\mu(\emptyset,A)&=-\sum_{\mathcal{N}(A) \subseteq S\subseteq \mathcal{F}(A)}(-1)^{|S|+1}=(-1)^{|\mathcal{N}(A)|}\sum_{S \subseteq \mathcal{F}(A)\setminus\mathcal{N}(A)}(-1)^{|S|} \\
 &=(-1)^{|\mathcal{N}(A)|}(1-1)^{|\mathcal{F}(A)\setminus\mathcal{N}(A)|}.
\end{align*}
Theorem~\ref{mobius} follows immediately.

\section{Link with the weak order}\label{SecWeak}

In this section, we show several examples of classical posets which can be described using valued digraphs. We first recall the definition of weak order on a Coxeter groups.

Let $W$ be a Coxeter group with generating set $S$, the \emph{weak order} on $W$ is the poset
$(W,\leq_R)$, defined as follows: we say that $w \leq_R \tau$ if and only if there exists $s_1,\ldots,s_k$ in $S$ such that $\tau=\omega s_1 \cdots s_k$ and $\ell(\tau)=\ell(w)+k$. 

It is well-known that $(W,\leq_R)$ is a complete meet semi-lattice when $W$ is infinite, a complete lattice when $W$ is finite, and that its M\"obius function takes values into $\{\pm 1,0 \}$ (see \cite{Bjor} and \cite{BB}). Hence, it is natural to look for an interpretation of the weak order through the theory developed in previous sections. Indeed, such an interpretation exists in some cases, and we give an explicit description for the following list of posets.

\begin{Theorem}\label{TheoListPoset}
For each poset $(\Pp,\leq)$ in the following list, there exists an explicit valued digraph $\G$ such that $(\Pp,\leq)$ is isomorphic to $(IS(\G),\subseteq)$:
\begin{itemize}
\item $(W,\leq_R)$ where  $W=A_{n-1},B_n,\widetilde{A}_n$ and $\leq_R$ is the (right) weak order on $W$;
\item $(\mathbb{Z} _r \wr S_n,\leq_f)$, called the flag weak order on $\mathbb{Z} _r \wr S_n$;
\item the up-set (resp. down-set) lattice of any finite poset.
\end{itemize}
\end{Theorem}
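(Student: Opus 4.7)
I would treat the five families case by case, following a common three-step template: (i) write down the valued digraph $\G$ explicitly, (ii) define a bijection $\varphi$ from $(\Pp,\leq)$ to $IS(\G)$, and (iii) check that $\varphi$ is order preserving. For the Coxeter-group cases, $\varphi$ sends $w\in W$ to its (left) inversion set $\Inv(w)\subseteq\Phi^+$, using the classical identification of $(W,\leq_R)$ with $(\{\Inv(w):w\in W\},\subseteq)$; for the flag weak order I use the inversion-set description from \cite{ABR}; and for the up-set (resp.\ down-set) lattice of a finite poset, $\varphi$ is the identity on subsets. The substance of the proof is to choose $\G$ so that the two conditions of Proposition~\ref{Technic} recover exactly the biclosure (convexity plus co-convexity) characterisation of these distinguished subsets.

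\textbf{Constructions.} For type $A_{n-1}$ I would take $V=\{(i,j):1\leq i<j\leq n\}$, arcs $(i,k)\to(i,j)$ and $(i,k)\to(j,k)$ for every $i<j<k$, and valuation $\theta((i,j))=j-i-1$. Condition~(1) of Proposition~\ref{Technic} then encodes the co-convexity ``$(i,k)\in A \Rightarrow (i,j)\in A$ or $(j,k)\in A$'', while condition~(2) encodes the convexity ``$(i,j),(j,k)\in A\Rightarrow (i,k)\in A$''. For $B_n$ the vertex set is the positive root system $\Phi^+=\{e_i\}\cup\{e_i\pm e_j:i<j\}$; the arcs and the valuation encode all rank-two decompositions of each positive root into sums of positive roots (notably $e_i+e_j=(e_i-e_k)+(e_k+e_j)=(e_i+e_k)+(e_j-e_k)$, together with $e_i=(e_i-e_k)+e_k$ for short roots), chosen so that Proposition~\ref{Technic} again matches biclosure. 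For $\widetilde A_n$ the digraph is the natural locally-finite affine extension of the type~$A$ construction, and the infinite-digraph version of Theorem~\ref{Lattice} applies. The flag weak order on $\mathbb{Z}_r\wr S_n$ is handled analogously, using the signed/coloured inversion triples of \cite{ABR}. Finally, for the up-set lattice of $(P,\leq_P)$ the construction is immediate: take $V=P$, arcs $x\to y$ whenever $x\lessdot_P y$, and $\theta\equiv 0$; condition~(2) of Proposition~\ref{Technic} then says exactly that the complement of $A$ is an order ideal, i.e.\ that $A$ is an up-set. The down-set lattice is dual.

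\textbf{Main obstacle.} The principal technical work, in each non-trivial case, is the forward/reverse verification of the bijection. Forward: for each $w\in W$, a reduced decomposition $w=s_{i_1}\cdots s_{i_\ell}$ produces a sequence of positive roots $\alpha_1,\dots,\alpha_\ell$ whose $k$-th prefix is $\Inv(s_{i_1}\cdots s_{i_k})$; I would then show by induction on $k$ that $\alpha_k$ is erasable in the digraph obtained after peeling $\alpha_1,\dots,\alpha_{k-1}$, so that the sequence is a peeling sequence of $\G$. Reverse: for any $A\in IS(\G)$ one must show that $A$ is biclosed in $\Phi^+$ and hence equals $\Inv(w)$ for a unique $w\in W$; by the classical reduction to rank-two subsystems, this amounts to verifying the closure conditions pair-by-pair, which is precisely what the arcs and $\theta$-values were designed to enforce. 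The bookkeeping is most intricate for $B_n$, where the short/long-root asymmetry produces asymmetric arc patterns, and for $\widetilde A_n$, where one must track periodicity to ensure that any $A$ satisfying the two conditions of Proposition~\ref{Technic} remains finite (so that it can be the inversion set of an element of finite length). That careful dihedral-pair analysis is where the bulk of the combinatorial work of the proof will lie.
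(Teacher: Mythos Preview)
Your type-$A$ valued digraph is exactly the paper's (the hooks in the staircase give precisely your arcs, and $\theta((i,j))=j-i-1=d^+/2$), and the overall case-by-case template matches. Where you diverge is the proof mechanism for the bijection, and there is a genuine issue in your reverse direction.

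The paper does \emph{not} argue via biclosure. For each of $A_{n-1}$, $B_n$, $\widetilde A_n$ it introduces a statistic $d_\sigma(a,b)$ (the number of values between $a$ and $b$ that also sit positionally between them in $\sigma$), proves $\theta_A=d_\sigma$ whenever $A=\Inv(\sigma)$, and then establishes the key local lemma: \emph{$(a,b)$ is erasable in $\G_A$ if and only if $a$ and $b$ are adjacent in $\sigma$}. Both directions of the bijection follow by induction along peeling sequences: every peeling step corresponds to a cover in weak order and conversely. Your ``forward'' direction is essentially this; the point is that the paper uses the \emph{same} adjacency lemma for the reverse direction as well, never invoking a static biclosure check.

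Your proposed reverse step---that conditions (1) and (2) of Proposition~\ref{Technic} reduce to rank-two subsystems and hence yield biclosure directly---does not work as stated. Those conditions are aggregate counting inequalities over \emph{all} intermediate $j$ simultaneously (for $(i,k)$, at least or at most $k-i-1$ of its $2(k-i-1)$ out-neighbours lie in $A$); they do not localise to a single triple $\{(i,j),(j,k),(i,k)\}$. A violation of co-convexity at one $j$ can be compensated numerically by having both $(i,j')$ and $(j',k)$ in $A$ for some other $j'$, and ruling this out requires propagating the constraints through several other vertices---which is exactly what the paper's inductive adjacency argument accomplishes. So the equivalence you want is true, but its proof is the paper's argument, not a dihedral-pair verification.

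Two smaller discrepancies. First, for $B_n$ and $\widetilde A_n$ the paper explicitly avoids root systems (see the Remarks opening \S\ref{SectB} and \S\ref{SectAffineA}) and works with signed and affine permutations, using \emph{shifted} hooks on $\lambda_n^s$ and \emph{cylindrical} hooks on $\lambda_n^{cyl}$; the $B$-inversion sets and the adjacency lemma are redeveloped by hand in each case. Your root-system encoding may well be equivalent, but you would still need to exhibit the correspondence and carry out the adjacency argument in that language. Second, your up-set construction is off by a dualisation: with arcs $x\to y$ for $x<y$ (or $x\lessdot y$) and $\theta\equiv 0$, condition~(2) says that if $x\notin A$ then no $y>x$ lies in $A$, i.e.\ $A$ is a \emph{down}-set, not an up-set. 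The paper obtains down-sets with $\theta\equiv 0$ and up-sets with $\theta(z)=d^+(z)$.
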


We prove Theorem~\ref{TheoListPoset} with a careful case-by-case study, which is done in the following sections. 
More precisely, in Section~\ref{SectSym} we provide a candidate of valued digraph associated with the weak order on $A_{n-1}$, and we prove that this candidate indeed provides a description of the weak order on $A_{n-1}$ in Section~\ref{SectionSymProof}. Similarly, in Section~\ref{SectB} we construct a valued digraph associated with $B_n$, and we prove in Section~\ref{SectionTypeBProof} that this valued digraph describe the weak order on $B_n$. Section~\ref{SectAffineA} is devoted to the study of the weak order on $\widetilde{A_n}$. Finally, in Sections~\ref{SectionFlagWeak} and \ref{SectionDownSet} we study the cases of the flag weak order and the up-set lattice, respectively.

\subsection{Weak order on $A_{n-1}$}\label{SectSym}

Recall that $A_{n-1}$ is the Coxeter group with generating set $S=\{s_1,\ldots,s_{n-1} \}$ and with Coxeter matrix $M = (m_{st})_{s,t\in S}$ given by $m_{s_is_{i+1}}=3$ for all $1 \leq i \leq n-2$, and $m_{st}=2$ otherwise. As usual, we identify $A_{n-1}$ with the symmetric group $S_n$, identifying the generator $s_i$ with the \emph{simple transposition} of $S_n$ which exchanges the integers $i$ and $i+1$.\\

When we try to find a valued digraph $\G=(G,\theta)$ such that $(A_{n-1},\leq_R)$ is isomorphic to $(IS(\G),\subseteq)$, the first problem arising is that, on the one hand we have a poset whose elements are permutations, and on the other hand we have a poset whose elements are sets. In order to overcome this difficulty, let us consider a canonical set associated with each permutation $\sigma \in S_n$, its \emph{inversion set}:
\begin{equation}\label{DefInv}
{\rm Inv}(\sigma)=\{(a,b) \in [n]^2\ | \ a<b \ {\rm and} \ \sigma^{-1}(a) > \sigma^{-1}(b) \}.
\end{equation}

There is a deep connection between inversion sets and the weak order on $S_n$. That is, we have the following well-known property (see \cite{BB}). For any $\sigma,\omega \in S_n$,
\begin{equation}\label{InvWeakSym}
 \sigma \leq_R \omega {\rm \ if \ and \ only \ if \ } {\rm Inv}(\sigma) \subseteq {\rm Inv}(\omega).
\end{equation}
This property allows us to clarify our goal: we are looking for a valued digraph $\G=(G,\theta)$ such that,
\begin{enumerate}
\item the vertices of the graph are indexed by couples of integers $(a,b)\in [n]^2$ such that $a<b$;
\item the digraph structure of $G$, together with the valuation $\theta$, imply that $IS(\G)$ is constituted exactly of the sets of the form ${\rm Inv}(\sigma)$, $\sigma \in S_n$. 
\end{enumerate}

There is a convenient way to represent the set $\{(a,b)\in [n]^2 \ | \ a<b \}$, considering the \emph{$n$-th staircase diagram}, namely the Ferrers diagram of the partition $\lambda_n=(n-1,n-2,\ldots,1)$ of size $N={n \choose 2}$. On the left of Figure~\ref{FigFPSAC03}, the diagram associated to the case $n=5$ is represented. The coordinates of each box can be read thanks to the circled integers on the diagonal. From now on, we identify $\lambda_n$ with the set $\{(a,b)\in [n]^2 \ | \ a<b \}$.
\begin{figure}[!h]
\includegraphics[scale=1]{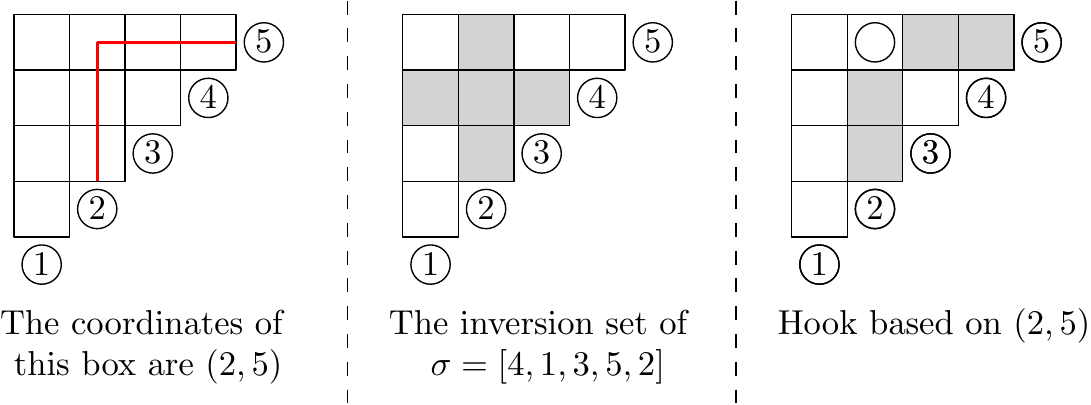}
\caption{}\label{FigFPSAC03}
\end{figure}
As shown in the middle of Figure~\ref{FigFPSAC03}, one can easily visualize the inversion set of any element of $S_n$ as a subset of boxes in $\lambda_n$. Note that the set made of all the boxes of the diagram corresponds to the inversion set of the \emph{reverse permutation} $[n,n-1,\ldots,1] \in S_n$, which is the maximal element in the weak order. 

We can define a digraph structure $G$ on the staircase diagram $\lambda_n$ (where the vertices are the boxes of the diagram), thanks to a classical combinatorial object associated to each box $\mathfrak{c}\in \lambda_n$, \emph{the hook based on $\mathfrak{c}$}, denoted $H(\mathfrak{c})$, consisting of $\mathfrak{c}$ and all the boxes which are on the right and below $\mathfrak{c}$ (see Figure~\ref{FigFPSAC03}, on the right): we say that there is an arc from $\mathfrak{c}$ to $\mathfrak{d}$ if and only if $\mathfrak{c} \neq \mathfrak{d}$ and $\mathfrak{d} \in H(\mathfrak{c})$. Obviously, the resulting digraph $G$ is acyclic, and the out-degree of any box is an even number. Thus, if we set $\theta$ the function defined by $\theta(\mathfrak{c}):=\frac{d^+(\mathfrak{c})}{2}$, then the couple $\A=(G,\theta)$ is a valued digraph. 
Let us summarize this construction in a definition.

\begin{definition}\label{DefValGrSym}
Let $G=(V,E)$ be the digraph such that $$V:=\lambda_n=\{(a,b)\in[n]^2 \ | \ a<b \} \ {\rm and} \ E:=\{(\mathfrak{c},\mathfrak{d})\in \lambda_n^2 \ | \ \mathfrak{c}\neq \mathfrak{d} \ {\rm and} \ \mathfrak{d} \in H(\mathfrak{c}) \}.$$
 We denote by $\A:=(G,\theta)$ the valued digraph such that for all $\mathfrak{c} \in \lambda_n$
 \[  \theta(\mathfrak{c}):=\frac{d^+(\mathfrak{c})}{2}.\]
\end{definition}

One can check that the posets $(IS(\A),\subseteq)$ obtained in the cases $n=2$, 3 and 4 are isomorphic to the weak order on $A_1$, $A_2$ and $A_3$, respectively. As stated in the following theorem, this situation is in fact general.

\begin{Theorem}\label{SymCase}
The posets $(S_n,\leq_R)$ and $(IS(\A),\subseteq)$ are isomorphic.
\end{Theorem}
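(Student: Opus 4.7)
The plan is to prove that the map $\phi : S_n \to 2^{\lambda_n}$ defined by $\phi(\sigma) = \Inv(\sigma)$ restricts to an order isomorphism from $(S_n, \leq_R)$ onto $(IS(\A), \subseteq)$. By~(\ref{InvWeakSym}), $\phi$ is automatically an injective order-embedding of $(S_n, \leq_R)$ into $(2^{\lambda_n}, \subseteq)$, so the entire content of the theorem reduces to the set-theoretic equality $\phi(S_n) = IS(\A)$, i.e.\ to matching the combinatorial criterion of Proposition~\ref{Technic} with the classical criterion for a subset of $\lambda_n$ to be an inversion set.

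The first step is to unpack the hook structure of Definition~\ref{DefValGrSym}. A direct inspection of the staircase diagram gives
\[
H((a,c)) \;=\; \{(a,c)\} \cup \{(a,b) : a<b<c\} \cup \{(b,c) : a<b<c\},
\]
so $\theta((a,c)) = c-a-1$. Writing $X_{ac}(A) = \{b : a<b<c,\ (a,b) \in A\}$ and $Y_{ac}(A) = \{b : a<b<c,\ (b,c) \in A\}$, the two conditions of Proposition~\ref{Technic} translate into $|X_{ac}(A)| + |Y_{ac}(A)| \geq c-a-1$ when $(a,c) \in A$, and the reverse inequality when $(a,c) \notin A$.

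For the inclusion $\phi(S_n) \subseteq IS(\A)$, I would fix $\sigma \in S_n$ and a pair $(a,c)$, and for each intermediate $b$ inspect the relative order of $\sigma^{-1}(a)$, $\sigma^{-1}(b)$, $\sigma^{-1}(c)$. A short case check shows that when $(a,c) \in \Inv(\sigma)$ every intermediate $b$ lies in $X_{ac} \cup Y_{ac}$ (so the sum is at least $c-a-1$), and when $(a,c) \notin \Inv(\sigma)$ no intermediate $b$ lies in $X_{ac} \cap Y_{ac}$ (so the sum is at most $c-a-1$).

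For the reverse inclusion I would invoke the classical fact that $A \subseteq \lambda_n$ equals $\Inv(\sigma)$ for some $\sigma \in S_n$ if and only if $A$ is \emph{biclosed}: for all $a<b<c$, (C1) $\{(a,b),(b,c)\} \subseteq A \Rightarrow (a,c) \in A$, and (C2) $(a,c) \in A \Rightarrow (a,b) \in A$ or $(b,c) \in A$. It then suffices to show that any $A$ satisfying the Proposition~\ref{Technic} inequalities is biclosed. I would argue by minimal counterexample: pick a triple $a<b<c$ violating (C1) or (C2) with $c-a$ minimal. If $c-a=2$ then $b=a+1$ is the unique intermediate index, and either violation directly contradicts the Proposition~\ref{Technic} inequality at $(a,c)$. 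If $c-a \geq 3$, the same inequality at $(a,c)$ forces the existence of a second intermediate index $b^*$ exhibiting the opposite behaviour, lying in $I_{ac}$ if (C2) fails at $(a,b,c)$ and in $U_{ac}$ if (C1) fails; a case split on whether $b < b^*$ or $b^* < b$, followed by the minimality hypothesis applied to the two strictly shorter triples spanned by $\{a,b,b^*,c\}$, then forces a single pair to lie both in and outside $A$. The main obstacle is precisely this last step: the Proposition~\ref{Technic} inequality at a single pair is strictly weaker than the biclosedness conditions involving that pair, so biclosedness can only be recovered through a global induction on the width of a putative bad triple.
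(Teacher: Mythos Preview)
Your approach is correct but genuinely different from the paper's. The paper works \emph{dynamically} through the peeling process: it introduces a statistic $d_\sigma(a,b)$ counting intermediate values lying between $a$ and $b$ in the one-line notation of $\sigma$, shows (Lemma~\ref{LemSymTech} and Proposition~\ref{PropSymCombinatInt}) that $d_\sigma$ coincides with the updated valuation $\theta_A$ after peeling $A=\Inv(\sigma)$, and then proves (Proposition~\ref{TechSym}) that a box $(a,b)$ is erasable in $\A_A$ exactly when $a$ and $b$ are adjacent in $\sigma$. This yields $IS(\A)=\{\Inv(\sigma)\}$ by walking up one cover relation at a time (Corollary~\ref{CorolFinalA}).

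You instead work \emph{statically} through Proposition~\ref{Technic}, translating its inequalities at $(a,c)$ into a count over intermediate indices, and then matching $IS(\A)$ with the classical biclosed characterisation of inversion sets via a minimal-width counterexample. Your inductive step is sound: in the (C2)-failure case, the inequality at $(a,c)$ forces some $b^*\in X_{ac}\cap Y_{ac}$; applying (C2) to the shorter triple $(a,b,b^*)$ (or $(a,b^*,b)$) puts $(b,b^*)$ (resp.\ $(b^*,b)$) into $A$, and then (C1) on the remaining shorter triple gives the contradiction; the (C1)-failure case is dual. Two small presentational points: the symbols $I_{ac}$ and $U_{ac}$ appear without definition (you mean $X_{ac}\cap Y_{ac}$ and the complement of $X_{ac}\cup Y_{ac}$ in $\{a{+}1,\dots,c{-}1\}$), and the biclosed criterion should be cited explicitly since the paper does not state it. The trade-off is that your argument is shorter and more self-contained for type $A$, while the paper's erasable-equals-adjacent lemma is what carries over verbatim to the $B_n$ and $\widetilde{A}_n$ cases treated later, where no equally clean biclosed criterion is available off the shelf.
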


The next section is dedicated to the proof of Theorem~\ref{SymCase}.

\subsection{Proof of Theorem~\ref{SymCase}}\label{SectionSymProof}

In this section, we will show that $$IS(\A)=\{ \Inv(\sigma) \ | \ \sigma \in S_n \},$$ which immediately implies Theorem~\ref{SymCase}. We divide our proof into three steps. First, for any permutation $\sigma \in S_n$ we define a statistic $d_{\sigma}$ on $\lambda_n\setminus \Inv(\sigma)$ (Definition~\ref{DefStatisticSym}), which we characterize using the notion of \emph{adjacency} (see Lemma~\ref{LemSymTech}). We then use this to give a combinatorial interpretation of the valuations $\theta_i$ appearing when we perform the peeling process on $\A$ (see Proposition~\ref{PropSymCombinatInt}). Finally, using this combinatorial interpretation, we prove that $IS(\A)=\{ \Inv(\sigma) \ | \ \sigma$ $ \in S_n \}$ (see Proposition~\ref{TechSym} and Corollary~\ref{CorolFinalA}). 

We begin with the definition of the statistic $d_{\sigma}$.

\begin{definition}\label{DefStatisticSym}
Let $\sigma \in S_n$ and $(a,b) \in \lambda_n \setminus \Inv(\sigma)$. Then, we set: 
$$d_{\sigma}(a,b):=|\{ a<k<b \ | \ \sigma^{-1}(a) < \sigma^{-1}(k)<\sigma^{-1}(b) \}|.$$
\end{definition}

Let $\sigma \in S_n$ and $(a,b) \in \lambda_n$, we say that $a$ and $b$ are \emph{adjacent} in $\sigma$ if and only if $\sigma^{-1}(b)=\sigma^{-1}(a)+1$. 
One can clearly visualize two adjacent entries of a permutation $\sigma\in S_n$, using the \emph{window notation} of $\sigma$. That is, $a$ and $b$ are adjacent in $\sigma$ if and only if $(a,b) \in \lambda_n$ and 
\[\sigma=[\sigma(1),\sigma(2),\ldots,a,b,\ldots,\sigma(n-1),\sigma(n)]. \]
This notion is linked to the weak order thanks to the following well-known property: for all $\sigma,\omega \in S_n$ we have $\sigma \leq_R \omega$ and $\ell(\omega)=\ell(\sigma)+1$ if and only if $\omega$ can be obtained from $\sigma$ by swapping positions of two adjacent entries of $\sigma$ and we say that $\omega$ \emph{covers} $\sigma$, denoted by $\sigma \lhd_R \omega$. This can be transposed to the context of inversion sets using Equation~\eqref{InvWeakSym} as follows: for all $\sigma,\omega \in S_n$, we have $$\sigma \lhd_R \omega {\rm \ if \ and \ only \ if \ } {\rm Inv}(\omega)={\rm Inv}(\sigma) \cup \{(a,b) \}$$ for $a$ and $b$ two adjacent entries of $\sigma$.

\begin{Remark}
Note that if $a$ and $b$ are two adjacent entries of $\sigma$, then $d_{\sigma}(a,b)=0$, but the converse is not true.
\end{Remark}

We now provide a characterization of the statistic $d_{\sigma}$.

\begin{lemma}\label{LemSymTech}
Let $\sigma \in S_n$ and $(a,b)\in \lambda_n \setminus \Inv(\sigma)$. Then, we have $$d_{\sigma}(a,b)=|\{ a<k<b \ | \ k \in \mathbb{N} \}|-|\{ a<k<b \ | \ (a,k) \in \Inv(\sigma) \ {\rm or} \ (k,b)\in \Inv(\sigma) \}|.$$
\end{lemma}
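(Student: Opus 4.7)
The plan is to unpack the definition of the inversion set in (\ref{DefInv}) and rewrite the counting of $d_\sigma(a,b)$ by complementation inside $\{k \in \mathbb{N} \mid a < k < b\}$. Since the hypothesis $(a,b) \in \lambda_n \setminus \Inv(\sigma)$ gives $a<b$ and $\sigma^{-1}(a) < \sigma^{-1}(b)$, the definition
\[ d_\sigma(a,b) = |\{a<k<b \mid \sigma^{-1}(a) < \sigma^{-1}(k) < \sigma^{-1}(b) \}| \]
counts exactly those $k \in (a,b)$ whose image under $\sigma^{-1}$ lies \emph{strictly} between $\sigma^{-1}(a)$ and $\sigma^{-1}(b)$. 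So $d_\sigma(a,b)$ equals the total number $|\{a<k<b\}|$ minus the number of $k \in (a,b)$ for which $\sigma^{-1}(k) < \sigma^{-1}(a)$ or $\sigma^{-1}(k) > \sigma^{-1}(b)$.

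Next I would translate each of these two failure conditions back into the language of inversion sets. For $a<k$, the condition $\sigma^{-1}(k) < \sigma^{-1}(a)$ is by (\ref{DefInv}) exactly $(a,k) \in \Inv(\sigma)$; similarly, for $k<b$, the condition $\sigma^{-1}(k) > \sigma^{-1}(b)$ is exactly $(k,b) \in \Inv(\sigma)$. Hence the set to subtract is precisely
\[ \{ a<k<b \mid (a,k) \in \Inv(\sigma) \text{ or } (k,b) \in \Inv(\sigma)\}. \]

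The only point that needs a remark (and this is where the hypothesis $(a,b) \notin \Inv(\sigma)$ is actually used) is that the ``or'' in the above set is exclusive: if both $\sigma^{-1}(k) < \sigma^{-1}(a)$ and $\sigma^{-1}(k) > \sigma^{-1}(b)$ held simultaneously, we would get $\sigma^{-1}(b) < \sigma^{-1}(a)$, contradicting $(a,b) \notin \Inv(\sigma)$. So the two subconditions describe disjoint subsets of $(a,b)$, and their union is counted without double-counting, which is what makes the complement-count rigorous.

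There is no real obstacle here: the statement is essentially a reformulation of the definition of $d_\sigma$, and once the $k \in (a,b)$ failing the defining inequality are split into the two disjoint cases above and re-expressed via $\Inv(\sigma)$, the identity follows by a one-line subtraction. In writing the proof I would simply present the partition of $\{a<k<b\}$ into the three subsets, read off the cardinalities, and conclude.
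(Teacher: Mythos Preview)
Your argument is correct and is in fact more direct than the paper's. The paper proves the lemma by induction along a reduced decomposition $s_1\cdots s_q$ of $\sigma$: setting $\sigma_i=s_1\cdots s_i$, it checks the base case $\sigma_0=Id$ and then analyses how both sides change when one passes from $\sigma_i$ to $\sigma_{i+1}$, according to whether the new inversion $(a_{i+1},b_{i+1})$ is of the form $(a,k)$, $(k,b)$, or neither. Your proof bypasses this entirely by partitioning $\{a<k<b\}$ into the three disjoint pieces determined by the position of $\sigma^{-1}(k)$ relative to $\sigma^{-1}(a)<\sigma^{-1}(b)$, and then translating the two ``failure'' pieces into the inversion conditions $(a,k)\in\Inv(\sigma)$ and $(k,b)\in\Inv(\sigma)$.

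One minor expository point: the disjointness of the two failure conditions is not really what makes the complement count rigorous --- the complement of $\{k:\sigma^{-1}(a)<\sigma^{-1}(k)<\sigma^{-1}(b)\}$ inside $\{a<k<b\}$ is \emph{equal as a set} to $\{k:\sigma^{-1}(k)<\sigma^{-1}(a)\text{ or }\sigma^{-1}(k)>\sigma^{-1}(b)\}$ simply by trichotomy and injectivity of $\sigma^{-1}$; the exclusivity of the ``or'' is a pleasant consequence rather than a needed ingredient. As for what each approach buys: yours is shorter and conceptually cleaner here, while the paper's inductive template is the one that is reused verbatim for the type $B$ and $\widetilde{A}$ analogues (Lemmas~\ref{LemAlterDefStatB} and \ref{LemTechAffin}), where the hook is no longer a simple interval and a one-line partition argument is less immediate.
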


\begin{proof}
Let $s_1\cdots s_q$ be a reduced decomposition of $\sigma$ and denote by $\sigma_i$ the permutation $s_1\cdots s_i$, $0 \leq i \leq q$ (with the convention that $\sigma_0=Id$). We will prove by induction on $i$ that the lemma is true for $\sigma_i$.

Note that the property is obviously true for $\sigma_0$. Let $i\geq 0$ be such that the property is true.
For the sake of clarity, let us denote by $\delta_j$ the integer $d_{\sigma_j}(a,b)$. Since $s_1 \cdots s_q$ is reduced, we have $\sigma_i \lhd_R \sigma_{i+1}$, thus there exists a unique $(a_{i+1},b_{i+1})$ in $\Inv(\sigma_{i+1}) \setminus \Inv(\sigma_{i})$. We now show how one can deduce the value of $\delta_{i+1}$ from both $\delta_i$ and $(a_{i+1},b_{i+1})$.
\begin{itemize}
\item (case $(a_{i+1},b_{i+1})=(a,k)$ with $a<k<b$) the permutation $\sigma_{i+1}$ is obtained from $\sigma_i$ by exchanging the position of the integer $a$ with the position of the integer $k$. Moreover, since $\sigma_{i}\lhd_R \sigma_{i+1}$, $a$ and $k$ are adjacent in $\sigma_{i}$. However, $(a,b) \notin \Inv(\sigma_i)$, thus $k$ lies strictly between $a$ and $b$ in the window notation of $\sigma_i$, \emph{i.e.} we have 
\[\sigma_i=[\sigma(1),\ldots,a,k,\ldots,b,\ldots,\sigma(n)].\] 
Hence, it is no longer the case in $\sigma_{i+1}$, so that $\delta_{i+1}=\delta_i-1$.
\item If $(a_{i+1},b_{i+1})=(k,b)$ with $a<k<b$, then with similar arguments we show that $\delta_{i+1}=\delta_i-1$.
\item In all other cases, both $a_{i+1}$ and $b_{i+1}$ either lie between $a$ and $b$ in $\sigma_i$, or they do note, and this is also true for $\sigma_{i+1}$. Therefore, we have $\delta_{i+1}=\delta_i$.
\end{itemize}
Finally, by induction hypothesis, $\sigma_{i+1}$ satisfies the property, and this ends the proof.
\end{proof}

For the sake of clarity, we introduce the following useful notation.

\begin{definition}\label{DefUltraImp}
Let $\G=(G,\theta)$ be a valued digraph and $A \in IS(\G)$, we denote by $\G_A=(G_A,\theta_A)$ the valued digraph obtained after removing all the elements of $A$ in $\G$ using the peeling process.
\end{definition}

We are now able to provide a combinatorial interpretation of $\theta_A$ for some $A \in IS(\A)$.

\begin{proposition}\label{PropSymCombinatInt}
Let $A \in IS(\A)$, if there exists $\sigma \in S_n$ such that $A=\Inv(\sigma)$, then for all $(a,b) \in \lambda_n \setminus \Inv(\sigma)$, we have $\theta_A(a,b)=d_{\sigma}(a,b)$.
\end{proposition}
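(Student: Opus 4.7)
The plan is to reduce the proposition to a short computation combining the concrete graph structure on $\lambda_n$, Lemma~\ref{LemSymTech}, and the transitivity of inversions.

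First I would unpack the digraph at the vertex $(a,b)$. The hook description of $\A$ shows that the out-neighbours of $(a,b)$ are exactly the $2(b-a-1)$ boxes
\[\{(a,k) : a<k<b\}\ \cup\ \{(k,b) : a<k<b\},\]
and consequently $\theta(a,b)=b-a-1$. From Definition~\ref{DefPeelSeq}, every time a vertex $x$ is peeled along a peeling sequence, the valuation at $(a,b)$ drops by $1$ if $((a,b),x)\in E$ and is unchanged otherwise; this behaviour is manifestly independent of the order in which the elements of $A$ are peeled. Applied to any peeling sequence realising $A$ as an initial section, this yields
\[\theta_A(a,b)\ =\ (b-a-1)\ -\ |\{k : a<k<b,\ (a,k) \in A\}|\ -\ |\{k : a<k<b,\ (k,b) \in A\}|.\]

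Next I would rewrite Lemma~\ref{LemSymTech}, using that $A=\Inv(\sigma)$, in the form
\[d_\sigma(a,b)\ =\ (b-a-1)\ -\ |\{k : a<k<b,\ (a,k) \in A\ \text{or}\ (k,b) \in A\}|.\]
Comparing the two displays, it suffices to check that the two sets counted separately in the first expression are disjoint, so that the cardinality of their union equals the sum of their cardinalities.

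This disjointness is exactly where the hypothesis $(a,b) \notin \Inv(\sigma)$ enters, via transitivity of inversions: if some $k$ belonged to both sets, then $\sigma^{-1}(a)>\sigma^{-1}(k)>\sigma^{-1}(b)$, forcing $(a,b) \in \Inv(\sigma)$, a contradiction. I do not expect a serious obstacle. The only mildly non-routine step is verifying that the closed-form expression for $\theta_A(a,b)$ really holds regardless of which peeling sequence for $A$ one uses; but this follows at once from the local, additive way in which $\theta_{i+1}$ is defined from $\theta_i$ in Definition~\ref{DefPeelSeq}. Once this is in place, the proposition reduces to a one-line calculation together with the transitivity observation.
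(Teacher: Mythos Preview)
Your argument is correct and follows essentially the same route as the paper: compute $\theta_A(a,b)$ from the hook description of the out-neighbours and compare with the formula in Lemma~\ref{LemSymTech}. Your explicit transitivity check that no $k$ can satisfy both $(a,k)\in\Inv(\sigma)$ and $(k,b)\in\Inv(\sigma)$ is a point the paper glosses over when it writes the subtracted term directly as $|\{a<k<b \mid (a,k)\in\Inv(\sigma)\ \text{or}\ (k,b)\in\Inv(\sigma)\}|$, so your version is in fact slightly more careful.
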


\begin{proof}
Let $(a,b) \in \lambda_n \setminus \Inv(\sigma)$, by construction of $\A=(G,\theta)$, there is an arc from $(a,b)$ to $(c,d)$ if and only if $(c,d)=(a,k)$ or $(k,b)$ with $a<k<b$. Thus, by definition of the peeling process, we have 
$$\theta_A(a,b)=\theta(a,b)-|\{ a<k<b \ | \ (a,k) \in \Inv(\sigma) \ {\rm or} \ (k,b)\in \Inv(\sigma) \}|.$$ 
Moreover, we obviously have $\theta(a,b)=b-a-1=|\{a<k<b \ | \ k \in \mathbb{N} \}|$. Consequently, thanks to Lemma~\ref{LemSymTech}, we have 
\begin{align*}
\theta_A(a,b)&=|\{a<k<b \ | \ k \in \mathbb{N} \}|-|\{ a<k<b \ | \ (a,k) \in \Inv(\sigma) \ {\rm or} \ (k,b)\in \Inv(\sigma) \}| \\
&=d_{\sigma}(a,b),
\end{align*}
which ends the proof.
\end{proof}

Finally, we are now able to prove the main property of this section, which immediately leads to the proof of Theorem~\ref{SymCase} (see Corollary~\ref{CorolFinalA}).

\begin{proposition}\label{TechSym}
Let $A \in IS(\A)$, $\sigma \in S_n$ such that $A=\Inv(\sigma)$ and $(a,b)\in \lambda_n\setminus A$. Then, $a$ and $b$ are adjacent in $\sigma$ if and only if $(a,b)$ is erasable in $\A_A$.
\end{proposition}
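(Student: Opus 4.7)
My plan is to translate the erasability condition into statements about $d_\sigma$ via Proposition~\ref{PropSymCombinatInt}. Since $(a,b)\notin A=\Inv(\sigma)$, that proposition gives $\theta_A(a,b)=d_\sigma(a,b)$, and it applies equally to every vertex that remains in $G_A$. So $(a,b)$ is erasable in $\A_A$ if and only if $d_\sigma(a,b)=0$ and $d_\sigma(c,d)>0$ for each predecessor $(c,d)$ of $(a,b)$ still present in $G_A$. The in-arcs of $(a,b)$ in $G$ are, by the hook construction described just before Proposition~\ref{PropSymCombinatInt}, exactly $(a,d)\to(a,b)$ with $b<d\leq n$ and $(c,b)\to(a,b)$ with $1\leq c<a$; the predecessors still present in $G_A$ are those not in $\Inv(\sigma)$, i.e.\ $(a,d)$ with $\sigma^{-1}(a)<\sigma^{-1}(d)$ and $(c,b)$ with $\sigma^{-1}(c)<\sigma^{-1}(b)$.

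For the forward direction, I would assume $a,b$ adjacent in $\sigma$, so $\sigma^{-1}(b)=\sigma^{-1}(a)+1$, and check the translated conditions directly. The equality $d_\sigma(a,b)=0$ is immediate from Definition~\ref{DefStatisticSym}. For a predecessor $(a,d)$ with $d>b$, the integer $k=b$ witnesses $d_\sigma(a,d)\geq 1$: we have $a<b<d$, $\sigma^{-1}(a)<\sigma^{-1}(b)$, and $\sigma^{-1}(d)>\sigma^{-1}(a)$ combined with $d\neq b$ force $\sigma^{-1}(d)>\sigma^{-1}(b)$. Symmetrically, for a predecessor $(c,b)$ with $c<a$, the integer $k=a$ witnesses $d_\sigma(c,b)\geq 1$ because $\sigma^{-1}(c)<\sigma^{-1}(b)=\sigma^{-1}(a)+1$ together with $c\neq a$ imply $\sigma^{-1}(c)<\sigma^{-1}(a)$.

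For the reverse direction, I would argue by contradiction: assume $(a,b)$ is erasable and $\sigma^{-1}(b)>\sigma^{-1}(a)+1$. Set $S=\{\sigma(j):\sigma^{-1}(a)<j<\sigma^{-1}(b)\}$, a nonempty set; since $d_\sigma(a,b)=0$, one has $S\cap(a,b)=\emptyset$, so every $m\in S$ satisfies $m<a$ or $m>b$. If some $m>b$ lies in $S$, pick $m^\star$ minimal in $S\cap(b,n]$; the vertex $(a,m^\star)$ is a predecessor of $(a,b)$ in $G_A$, so $d_\sigma(a,m^\star)>0$ produces some $k$ with $a<k<m^\star$ and $\sigma^{-1}(a)<\sigma^{-1}(k)<\sigma^{-1}(m^\star)<\sigma^{-1}(b)$; then $k\in S$ and $k>a$ force $k>b$, giving $b<k<m^\star$ with $k\in S$, contradicting the minimality of $m^\star$. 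Otherwise $S\subseteq[1,a)$; here I would choose $m^\star=\sigma(\sigma^{-1}(b)-1)$, which lies in $S$ by non-adjacency and hence satisfies $m^\star<a$. The predecessor $(m^\star,b)$ then belongs to $G_A$, so $d_\sigma(m^\star,b)>0$ would demand a $k$ with $\sigma^{-1}(b)-1=\sigma^{-1}(m^\star)<\sigma^{-1}(k)<\sigma^{-1}(b)$, which is impossible. The main obstacle is the choice of $m^\star$ in this second case: a naive choice such as the maximum of $S$ does not produce any contradiction, whereas selecting the element of $S$ sitting immediately before $b$ in the window notation of $\sigma$ squeezes the required witness $k$ into an empty interval of positions.
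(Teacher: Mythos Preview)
Your proof is correct and follows essentially the same approach as the paper's: translate erasability via Proposition~\ref{PropSymCombinatInt} into conditions on $d_\sigma$, verify the forward direction by exhibiting the witness $k=b$ (resp.\ $k=a$) for each remaining predecessor $(a,d)$ (resp.\ $(c,b)$), and handle the reverse direction by contradiction through an extremal choice of predecessor. The only substantive difference is in your second sub-case of the reverse direction. The paper, in its case $c<a$, takes $d=\max\bigl(S\cap[1,a)\bigr)$ and argues that any $k$ counted by $d_\sigma(d,b)$ would lie in $S$ and exceed $d$, violating maximality; your choice $m^\star=\sigma(\sigma^{-1}(b)-1)$ is slicker, since it makes the position interval $(\sigma^{-1}(m^\star),\sigma^{-1}(b))$ empty and forces $d_\sigma(m^\star,b)=0$ immediately. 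Your closing remark that the maximum of $S$ ``does not produce any contradiction'' is not accurate, though: that is precisely the paper's choice, and it does yield $d_\sigma(d,b)=0$ after the short argument just described. This is only side commentary and does not affect the validity of your argument.
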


\begin{proof}
Assume that $a$ and $b$ are adjacent in $\sigma$, then $d_{\sigma}(a,b)=0$. Let $(c,d) \in \lambda_n \setminus A$ be such that there is an arc from $(c,d)$ to $(a,b)$, thus we have $(c,d)=(a,p)$ with $p>b$ or $(c,d)=(q,b)$ with $q<a$. Since $a$ and $b$ are adjacent in $\sigma$, we have in the first case that $b$ is between $a$ and $p$ in the window notation of $\sigma$, \emph{i.e} we have 
\[ \sigma=[\sigma(1),\ldots,a,b,\ldots,p,\ldots,\sigma(n)],\]
and we have in the second case that $a$ is between $q$ and $b$ in $\sigma$. In both cases, $\theta_{A}(c,d)=d_{\sigma}(c,d) \geq 1$. Consequently, $(a,b)$ is erasable in $\A_A$.

We now prove the converse implication. Let $(a,b) \in \lambda_n \setminus A$ be erasable in $\A_A$, and assume by contradiction that $a$ and $b$ are not adjacent in $\sigma$. Then, there exists $1 \leq c \leq n$ which is between $a$ and $b$ in $\sigma$ and since $\theta_A(a,b)=d_{\sigma}(a,b)=0$, we have $c<a$ or $c>b$. 
\begin{itemize}
\item Case $c<a$. Let $d$ be maximal such that $d<a$ and $d$ is between $a$ and $b$ in $\sigma$, and let $k$ be an integer which is between $d$ and $b$ in $\sigma$ (if such a $k$ exists), we have:
\begin{itemize}
\item by maximality of $d$, $k \notin \{d,d+1,\ldots,a\}$;
\item since $d_{\sigma}(a,b)=0$, $k \notin \{a,a+1,\ldots,b\}$.
\end{itemize}
Thus, $d_{\sigma}(d,b)=0=\theta(d,b)$, which is absurd since $(a,b)$ is erasable and there is an arc from $(d,b)$ to $(a,b)$.
\item The case $c>b$ leads to a similar contradiction.
\end{itemize}
This proves that $a$ and $b$ are adjacent in $\sigma$, and this ends the proof.
\end{proof}

\begin{corollary}\label{CorolFinalA}
$IS(\A)=\{ \Inv(\sigma) \ | \ \sigma \in S_n \}$.
\end{corollary}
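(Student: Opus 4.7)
The plan is to prove the two inclusions by parallel inductions, with Proposition~\ref{TechSym} doing the real work at the inductive step of each direction. The bridge Proposition~\ref{TechSym} provides --- ``erasable in $\A_{\Inv(\omega)}$'' is equivalent to ``$a$ and $b$ adjacent in $\omega$'' --- combined with the well-known fact that covers in the weak order correspond exactly to swapping two adjacent entries, makes both implications essentially automatic.

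For $\{\Inv(\sigma) \mid \sigma \in S_n\} \subseteq IS(\A)$, I would induct on $\ell(\sigma)$. The base case $\sigma = e$ is $\Inv(e) = \emptyset \in IS(\A)$. For $\sigma \neq e$, pick any $\omega$ with $\omega \lhd_R \sigma$; then $\Inv(\sigma) = \Inv(\omega) \cup \{(a,b)\}$ for some pair $a,b$ adjacent in $\omega$. By the inductive hypothesis $\Inv(\omega) \in IS(\A)$, and Proposition~\ref{TechSym} applied to $A = \Inv(\omega)$ yields that $(a,b)$ is erasable in $\A_{\Inv(\omega)}$. Extending a peeling sequence realizing $\Inv(\omega)$ by peeling $(a,b)$ then produces $\Inv(\sigma) \in IS(\A)$.

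For the reverse inclusion $IS(\A) \subseteq \{\Inv(\sigma) \mid \sigma \in S_n\}$, I would induct on $|A|$. The base case is $\emptyset = \Inv(e)$. For $A \in IS(\A)$ of size $k \geq 1$, choose a peeling sequence $L = [x_1, x_2, \ldots]$ of $\A$ with $A = \{x_1, \ldots, x_k\}$, set $A' = \{x_1, \ldots, x_{k-1}\} \in IS(\A)$, and write $x_k = (a,b)$. By construction $x_k$ is erasable in $\A_{A'}$, and by the inductive hypothesis $A' = \Inv(\omega)$ for some $\omega \in S_n$; Proposition~\ref{TechSym} then forces $a$ and $b$ to be adjacent in $\omega$. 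Letting $s$ denote the simple transposition swapping the adjacent positions of $a$ and $b$ in $\omega$, the permutation $\sigma = \omega s$ satisfies $\Inv(\sigma) = \Inv(\omega) \cup \{(a,b)\} = A$, closing the induction. No real obstacle arises here: all of the combinatorial content has been absorbed into Proposition~\ref{TechSym}, and what remains is a careful matching of cover relations on the two sides.
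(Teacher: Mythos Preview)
Your proposal is correct and follows essentially the same approach as the paper: both arguments are inductions that use Proposition~\ref{TechSym} to match each peeling step with the swap of two adjacent entries. The paper's proof is terser --- it walks along a single peeling sequence $L$ and invokes Proposition~\ref{TechSym} recursively to produce permutations $\sigma_k$ with $\Inv(\sigma_k)=\{(a_1,b_1),\ldots,(a_k,b_k)\}$, then declares ``this is enough'' --- whereas you separate the two inclusions into two explicit inductions; but the underlying mechanism is identical.
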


\begin{proof}
Let $L=[(a_1,b_1),\ldots,(a_N,b_N)] \in PS(\A)$, since $\Inv(Id)=\emptyset$, $a_1$ and $b_1$ are adjacent in $Id$ by Proposition~\ref{TechSym}. Let $\sigma_1$ be the permutation which has $\{(a_1,b_1) \}$ as inversion set, then, using recursively Proposition~\ref{TechSym}, we show that for all $1 \leq k \leq N$, there exists a permutation $\sigma_k$ which has $\{(a_1,b_1),\ldots,(a_k,b_k) \}$ as inversion set. This is enough to prove the corollary.
\end{proof}

This concludes the proof of Theorem~\ref{SymCase}.

\subsection{Weak order on $B_n$}\label{SectB}

Recall that $B_n$ is the Coxeter group with generating set $S=\{s_0,s_1,\ldots, s_{n-1} \}$, and with Coxeter matrix $M = (m_{st})_{s,t\in S}$ given by $m_{s_is_{i+1}}=3$ for all $1 \leq i \leq n-1$, $m_{s_0s_1}=4$ and $m_{st}=2$ otherwise. This group can be seen as the group of the \emph{signed permutations} $\omega$ of the set $[\pm n]:=\{-n,\ldots,-1,1,\ldots,n \}$ satisfying $\omega(-m)=-\omega(m)$ for all $m$. Within this interpretation, $s_0$ is the signed permutation such that $s_0(1)=-1$ and $s_0(j)=j$ for all $j>1$, and $s_i$ is the permutation which exchange the positions of $i$ and $i+1$ (and also the positions of $-i$ and $-i-1$). In what follows, we will sometimes represent an element $\omega$ in $B_n$ by its \emph{full window notation}, that is:
\[ [\omega(-n),\omega(-(n-1)),\ldots,\omega(-1),\omega(1),\ldots,\omega(n-1),\omega(n)]. \]

Our aim in this section is to provide an interpretation of $(B_n,\leq_R)$ using our theory. First, we need to find a candidate of valued digraph. For that purpose, we follow the same method as in Section~\ref{SectSym}, using a good notion of inversion set.

\begin{Remark}
It is important to note that we will not use the notion of inversion set coming from root systems here. Indeed, the combinatorial techniques we use here heavily depend on the interpretation of $B_n$ as a set of permutations, and not as a set of reflections. The drawback of this approach is that we will have to relate by ourselves these inversion sets to the weak order on $B_n$. Fortunately, most of the technical points have already been accomplished in \cite{BB}.
\end{Remark}

We begin with associating to each element $\omega$ of $B_n$ a $B$-inversion set, defined by:
\begin{equation}\label{EqInvB} 
\Inv_B(\omega)=\{(a,b) \in [\pm n]^2 \ | \ a<b, \ |a| \leq b \ {\rm and} \ \omega^{-1}(a) > \omega^{-1}(b) \}.
\end{equation}

Let us now relate $B$-inversion set to the weak order on $B_n$. For that purpose, we will need a definition and a result coming from \cite{BB}, which we now give.

\begin{definition}see~\cite[Eq.(8.2)]{BB}
Let $\omega \in B_n$, the $B$-inversion number of $\omega$ is the quantity
\begin{align*}
\inv_B(\omega):=&|\{(a,b)\in[n]^2\ |\ a<b \ {\rm and} \ \omega(a)>\omega(b) \}| \\ & + \ |\{(a,b)\in[n]^2\ |\ a\leq b \ {\rm and} \ \omega(-a)>\omega(b) \}.|
\end{align*}
\end{definition}

\begin{lemma}\label{LemBrentBjornTypeB}
(see \cite[Eq. (8.6)]{BB})
Let $\omega \in B_n$ and $i\in [n-1]$, we have
\begin{equation*}
\inv_B(\omega s_i)=\left\{
\begin{aligned}
\inv_B(\omega)+1, \ & \ \text{if} \ \omega(i)<\omega(i+1), \\
\inv_B(\omega)-1, \ & \ \text{if} \ \omega(i)>\omega(i+1).
\end{aligned}
\right.
\end{equation*}
We also have $\inv_B(\omega s_0)=\inv_B(\omega)+\text{sign}\left(\omega(1)\right)$.
\end{lemma}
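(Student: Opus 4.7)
The plan is to verify both identities by a direct case analysis, tracking which pairs in the two sums defining $\inv_B$ are affected by right multiplication by $s_i$. The key observation is that in window notation, $\omega s_i$ agrees with $\omega$ outside positions $\pm i,\pm(i{+}1)$ (for $i\geq 1$) or outside position $\pm1$ (for $i=0$), so only a very small collection of pairs $(a,b)$ can change status.

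First I would handle $s_i$ with $i \in [n-1]$. Multiplying on the right by $s_i$ swaps the values at positions $i$ and $i{+}1$, and (because we are in $B_n$) simultaneously swaps the values at positions $-i$ and $-i{-}1$ with the corresponding sign change. I would then split the count $\inv_B(\omega)$ according to the two sums in the definition. For the first (``type I'') sum $|\{a<b\leq n:\omega(a)>\omega(b)\}|$, only pairs with $\{a,b\}\cap\{i,i{+}1\}\neq\emptyset$ may change; the pairs $(i,b)$ with $b>i{+}1$ get matched with $(i{+}1,b)$ and simply exchange their truth values, and symmetrically for pairs $(a,i),(a,i{+}1)$ with $a<i$. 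So the only net change in the type I sum comes from the pair $(i,i{+}1)$, which contributes $+1$ exactly when $\omega(i)<\omega(i{+}1)$ and $-1$ otherwise. For the ``type II'' sum $|\{a\leq b\leq n:\omega(-a)>\omega(b)\}|$, the affected pairs are $(i,i),(i{+}1,i{+}1),(i,i{+}1)$ and pairs $(i,b),(i{+}1,b)$ with $b>i{+}1$, as well as $(a,i),(a,i{+}1)$ with $a<i$. The diagonal pairs $(i,i)$ and $(i{+}1,i{+}1)$ exchange roles (one records $\omega(i)<0$ and the other $\omega(i{+}1)<0$); the pair $(i,i{+}1)$ is preserved thanks to the equivalence $-\omega(i{+}1)>\omega(i)\iff\omega(i)+\omega(i{+}1)<0\iff-\omega(i)>\omega(i{+}1)$; and the remaining pairs match up in symmetric couples exactly as in type I. So the type II sum is unchanged, and the total variation is governed by $(i,i{+}1)$ alone, giving the first identity.

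Next I would handle $s_0$. Here $\omega s_0$ differs from $\omega$ only by $\omega(1)\mapsto -\omega(1)$ (and $\omega(-1)\mapsto\omega(1)$). The only pairs whose status can change are those with $a=1$ in either sum, and those with $b=1$ in type II. For $b\geq 2$, the type I pair $(1,b)$ toggles between the conditions $\omega(1)>\omega(b)$ and $-\omega(1)>\omega(b)$, while the type II pair $(1,b)$ toggles between $-\omega(1)>\omega(b)$ and $\omega(1)>\omega(b)$; adding these two contributions shows that the $b$-th summand is preserved. The only remaining pair is the type II diagonal $(1,1)$: under $\omega$ it contributes $[\omega(1)<0]$, under $\omega s_0$ it contributes $[\omega(1)>0]$, producing a net change of exactly $\mathrm{sign}(\omega(1))$. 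This establishes the second identity.

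The main obstacle is the careful bookkeeping for the type II sum in the $s_i$ case: one must verify that each affected pair either cancels with a unique partner under the swap, or is preserved by a sign-balancing argument (as with $(i,i{+}1)$). All the other cases reduce to straightforwardly matching the two summations. Alternatively, one could deduce the lemma from the identification $\inv_B=\ell$ between $\inv_B$ and the Coxeter length on $B_n$ combined with the standard descent characterization, but proving that identification essentially reproduces the same case analysis, so the direct approach above is the most economical.
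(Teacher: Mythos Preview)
Your direct case analysis is correct. Note, however, that the paper does not give its own proof of this lemma: it simply quotes the result from Bj\"orner--Brenti \cite[Eq.~(8.6)]{BB} and uses it as a black box. So there is no ``paper's own proof'' to compare against; you have supplied a self-contained verification where the paper relies on an external reference. Your bookkeeping for the type~II sum (the diagonal swap $(i,i)\leftrightarrow(i{+}1,i{+}1)$, the sign-balancing for $(i,i{+}1)$ via $\omega(i)+\omega(i{+}1)<0$, and the pairing of the remaining terms) is exactly what is needed, and the $s_0$ case is handled cleanly by matching the type~I and type~II contributions at each $b\geq 2$ and isolating the diagonal $(1,1)$.
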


The statistic $\inv_B$ is related to $B$-inversion sets, thanks to the following lemma.

\begin{lemma}\label{LemInvB-invB}
For all $\omega \in B_n$, we have $\inv_B(\omega)=|\Inv_B(\omega)|$.
\end{lemma}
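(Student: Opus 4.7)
\medskip
\noindent\emph{Proof plan.} The strategy is to massage the definition of $\Inv_B(\omega)$ so that $|\Inv_B(\omega)|$ is recognised as $\inv_B(\omega^{-1})$, and then to argue that $\inv_B$ is invariant under taking inverses. First I would observe that the two constraints $a<b$ and $|a|\leq b$ together with $a,b\in[\pm n]$ force $b>0$, and split $\Inv_B(\omega)$ according to the sign of $a$. When $a>0$ the constraints reduce to $1\leq a<b\leq n$, producing the summand
\[ |\{(a,b)\in[n]^2\,:\,a<b,\ \omega^{-1}(a)>\omega^{-1}(b)\}|. \]
When $a<0$, the substitution $a'=-a$ (so that $1\leq a'\leq b\leq n$) gives the summand
\[ |\{(a',b)\in[n]^2\,:\,a'\leq b,\ \omega^{-1}(-a')>\omega^{-1}(b)\}|. \]
Comparing with the definition of $\inv_B$ applied to the signed permutation $\omega^{-1}$ (i.e.\ substitute $\omega$ by $\omega^{-1}$ in the displayed formula), one sees that each summand agrees term by term, hence $|\Inv_B(\omega)|=\inv_B(\omega^{-1})$.

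The second step is to prove $\inv_B(\omega^{-1})=\inv_B(\omega)$, and I would do this by showing that $\inv_B(\omega)$ coincides with the Coxeter length $\ell(\omega)$. This is a short induction on $\ell(\omega)$: one has $\inv_B(e)=0=\ell(e)$, and for the inductive step the well-known recursion for $\ell$ in type $B$ states that $\ell(\omega s_i)=\ell(\omega)+1$ precisely when $\omega(i)<\omega(i+1)$ (for $i\geq1$) or $\omega(1)>0$ (for $i=0$). Lemma~\ref{LemBrentBjornTypeB} shows that $\inv_B$ satisfies exactly this recursion. Since $\ell(\omega)=\ell(\omega^{-1})$ holds in any Coxeter group, we deduce $\inv_B(\omega)=\inv_B(\omega^{-1})$, and combining with the first step yields the desired identity $|\Inv_B(\omega)|=\inv_B(\omega)$.

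The only part that requires any real care is the bookkeeping in the case split of the first step, in particular keeping track of the strict vs. non-strict inequality on $a'$ when $a<0$ (this is why the second summand of $\inv_B$ allows $a\leq b$ rather than $a<b$). Everything else is essentially standard Coxeter-theoretic material that can be quoted from \cite{BB}; no genuine obstacle is expected.
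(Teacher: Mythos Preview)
Your argument is correct, and in fact more complete than the paper's own proof. The paper performs the same sign-split on $a$ that you describe, rewriting $\inv_B(\omega)$ as
\[
|\{(a,b)\in[\pm n]^2 \mid a<b,\ |a|\leq b,\ \omega(a)>\omega(b)\}|,
\]
and then simply declares this equal to $|\Inv_B(\omega)|$. But by the definition of $\Inv_B$ (which uses $\omega^{-1}$), that set is literally $\Inv_B(\omega^{-1})$, not $\Inv_B(\omega)$; the two sets generally differ as sets, though they have the same cardinality. The paper silently absorbs this step, whereas you make it explicit.

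Your way of bridging the gap---showing $\inv_B=\ell$ by induction using Lemma~\ref{LemBrentBjornTypeB} and then invoking $\ell(\omega)=\ell(\omega^{-1})$---is perfectly sound and is the standard Coxeter-theoretic route. An alternative, should you prefer to stay purely combinatorial, is to exhibit a direct bijection: send $(a,b)$ with $\omega(a)>\omega(b)$ to whichever of $(\omega(b),\omega(a))$ or $(-\omega(a),-\omega(b))$ satisfies the constraint $|\cdot|\leq\cdot$ (exactly one does, unless $\omega(a)=-\omega(b)$, in which case they coincide). Either approach works; yours has the advantage of yielding $\inv_B=\ell$ as a by-product, which is useful later anyway.
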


\begin{proof}
We have 
\begin{align*}
\inv_B(\omega)=& \ |\{(a,b)\in[n]^2\ |\ a<b \ {\rm and} \ \omega(a)>\omega(b) \}| \\ 
& \qquad + \ |\{(a,b)\in[n]^2\ |\ a\leq b \ {\rm and} \ \omega(-a)>\omega(b) \}.| \\ 
=& \ |\{(a,b)\in[\pm n]^2 \ | \ a<b, \ |a|\leq b \ \text{and} \ \omega(a)>\omega(b) \} \\
=& \ |\Inv_B(\omega)|,
\end{align*}
which is the expected result.
\end{proof}

We now begin to prove that $B$-inversion sets can be used to study $(B_n,\leq_R)$. That is, we will show that for all $\omega,\tau \in B_n$ we have
\begin{equation}\label{EqInvBWeak}
 \omega \leq_R \tau \ \text{if and only if} \ \Inv_B(\omega) \subseteq \Inv_B(\tau).
\end{equation}

We start with defining the equivalent of the notion of adjacency in type $B$.

\begin{definition}\label{DefAdjaB}
Let $\omega \in B_n$ and $a<b$ be in $[\pm n]$, we say that $a$ and $b$ are \emph{$B$-adjacent} in $\omega$ if and only if the following two conditions are true:
\begin{enumerate}
\item $|a| \leq b$,
\item $a$ and $b$ are adjacent in $\omega$ (seen as a permutation of $[\pm n]$, \emph{i.e.} the full window notation of $\omega$ is of the form
\[ \omega=[\omega(-n),\ldots,a,b,\ldots,\omega(n)]. \]
\end{enumerate}
\end{definition}

It appears that the notion of $B$-adjacency plays the same role in type $B$ as the usual notion of adjacency do in type $A$, as shown in the next proposition.

\begin{proposition}\label{propTechInvB}
Let $\omega \in B_n$ and $0 \leq j \leq n-1$, there exists $(a,b)$ such that $|a|\leq b$, and such that $\omega s_j$ is obtained from $\omega$ by swapping the positions of $a$ and $b$ and the positions of $-b$ and $-a$ in $\omega$. Then, we have two possibilities:
\begin{itemize}[leftmargin=0.5cm]
\item if $a$ and $b$ are $B$-adjacent in $\omega$, then $\ell(\omega s_j)=\ell(\omega)+1$ and $\Inv_B(\omega s_j)=\Inv_B(\omega)\cup \{(a,b) \}$;
\item if $a$ and $b$ are not $B$-adjacent in $\omega$, then $\ell(\omega s_j)= \ell(\omega)-1$ and $\Inv_B(\omega s_j)=\Inv_B(\omega)\setminus \{(a,b) \}$.
\end{itemize}
\end{proposition}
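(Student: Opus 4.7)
The plan is first to identify the pair $(a,b)$ explicitly from the action of $s_j$, then to combine Lemma~\ref{LemBrentBjornTypeB} with a direct position analysis to deduce both equivalences.

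I would first identify $(a,b)$. For $j\geq 1$, set $c = \omega(j)$ and $d = \omega(j+1)$: right multiplication by $s_j$ realises the double transposition $(c\leftrightarrow d)(-c\leftrightarrow -d)$ on the values carried by $\omega$. Writing this as $(a\leftrightarrow b)(-a\leftrightarrow -b)$ with $a<b$ leaves the two possibilities $\{a,b\}=\{c,d\}$ and $\{a,b\}=\{-c,-d\}$, and a short case check on the signs of $c,d$ shows that exactly one of them satisfies $|a|\leq b$ (the two choices coincide precisely when $c=-d$). For $j=0$, the action of $s_0$ simply negates $\omega(1)$, so the pair is forced to be $(a,b)=(-|\omega(1)|,|\omega(1)|)$, which satisfies $|a|=b$.

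Next, I would apply Lemma~\ref{LemBrentBjornTypeB} to determine when $\ell(\omega s_j)=\ell(\omega)+1$: for $j\geq 1$ this is the condition $c<d$, and for $j=0$ it is $\omega(1)>0$. In parallel, I would read off the positions of $a$ and $b$ in the full window of $\omega$ from the explicit description above. In each subcase the two positions of $a,b$ are $\{j,j+1\}$, $\{-j-1,-j\}$, or $\{-1,1\}$, all of which are adjacent in the full window, so $B$-adjacency of $a,b$ reduces to checking which of the two sits in the left-hand position. A direct inspection then shows that $a$ precedes $b$ in $\omega$ iff $c<d$ (respectively $\omega(1)>0$), yielding the equivalence ``$a$ and $b$ are $B$-adjacent in $\omega$'' $\Longleftrightarrow$ ``$\ell(\omega s_j)=\ell(\omega)+1$''.

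For the statement about $\Inv_B$, I would rely on the key observation that $s_j$, viewed as a permutation of the totally ordered set $[\pm n]$, preserves the usual order except on the single pair of positions $\{j,j+1\}$ (and $\{-j-1,-j\}$) for $j\geq 1$, or $\{-1,1\}$ for $j=0$. Since whether $(p,q)\in \Inv_B(\omega)$ depends only on the order of $\omega^{-1}(p)$ and $\omega^{-1}(q)$, the pairs $(p,q)$ that can lie in $\bigl(\Inv_B(\omega)\setminus \Inv_B(\omega s_j)\bigr)\cup \bigl(\Inv_B(\omega s_j)\setminus \Inv_B(\omega)\bigr)$ are exactly those whose positions form one of these reversed pairs. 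Enumerating the candidates $\{p,q\}=\{c,d\}$ and $\{p,q\}=\{-c,-d\}$ (resp.\ $\{-\omega(1),\omega(1)\}$) and imposing $|p|\leq q$ singles out precisely one pair, namely $(a,b)$. Combined with the position analysis of the previous paragraph, which decides whether $(a,b)\in \Inv_B(\omega)$, this gives the two claimed formulas for $\Inv_B(\omega s_j)$.

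The main obstacle is the sign bookkeeping needed to pin down $(a,b)$ in the mixed-sign case, where both $\{c,d\}$ and $\{-c,-d\}$ are \emph{a priori} candidates: the constraint $|a|\leq b$ is what singles one out, and the consistency of this choice has to be tracked across the length, $B$-adjacency and $B$-inversion analyses.
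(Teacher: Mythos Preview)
Your argument is correct and follows essentially the same route as the paper, which dispatches the proposition in one line as ``a consequence of Lemma~\ref{LemInvB-invB} together with Lemma~\ref{LemBrentBjornTypeB} and Definition~\ref{DefAdjaB}.'' You have simply unpacked those citations: Lemma~\ref{LemBrentBjornTypeB} for the length change, Definition~\ref{DefAdjaB} for the adjacency check, and a direct position analysis in place of invoking Lemma~\ref{LemInvB-invB} to pin down the single element in the symmetric difference of the two $B$-inversion sets.

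The only minor deviation is that you avoid Lemma~\ref{LemInvB-invB} altogether by arguing directly that the symmetric difference $\Inv_B(\omega)\,\triangle\,\Inv_B(\omega s_j)$ consists of exactly the one pair $(a,b)$, whereas the paper implicitly uses $|\Inv_B(\omega s_j)|=|\Inv_B(\omega)|\pm 1$ to know that only one element changes and then identifies it. Both work; yours is marginally more self-contained.
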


\begin{proof}
This is a consequence of Lemma~\ref{LemInvB-invB} together with Lemma~\ref{LemBrentBjornTypeB} and Definition~\ref{DefAdjaB}.
\end{proof}

An immediate consequence of Proposition~\ref{propTechInvB} is the following proposition.

\begin{proposition}\label{PropBAdja}
Let $\omega,\tau \in B_n$. Then, $\omega \lhd_R \tau$ if and only if there exists $a,b\in[\pm n]$ $B$-adjacent in $\omega$ such that $\Inv_B(\tau)=\Inv_B(\omega)\cup \{(a,b)\}$. 
\end{proposition}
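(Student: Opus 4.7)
The plan is to derive Proposition~\ref{PropBAdja} directly from Proposition~\ref{propTechInvB}, together with the fact that a signed permutation is uniquely determined by its $B$-inversion set.

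For the forward implication, suppose $\omega \lhd_R \tau$, so that $\tau = \omega s_j$ for some $j \in \{0,\ldots,n-1\}$ with $\ell(\tau) = \ell(\omega) + 1$. Combining Lemma~\ref{LemBrentBjornTypeB} and Lemma~\ref{LemInvB-invB} one obtains $|\Inv_B(\omega s_j)| = |\Inv_B(\omega)| + 1$, which rules out the second alternative in Proposition~\ref{propTechInvB}. Hence the first alternative must hold, producing a pair $(a,b)$ which is $B$-adjacent in $\omega$ and satisfies $\Inv_B(\tau) = \Inv_B(\omega) \cup \{(a,b)\}$.

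For the converse, suppose that $a,b \in [\pm n]$ are $B$-adjacent in $\omega$ and $\Inv_B(\tau) = \Inv_B(\omega) \cup \{(a,b)\}$. The first step is to read off from the position of the pair $(a,b)$ in the full window notation of $\omega$ a simple generator $s_j$ whose right multiplication swaps $a$ with $b$ (and simultaneously $-b$ with $-a$). Writing $(p,p+1)$ for the consecutive positions occupied by $a$ and $b$, one takes $s_j := s_p$ if $p \geq 1$; $s_j := s_0$ if $p = -1$, which forces $a = -b$; and $s_j := s_{-p-1}$ if $p \leq -2$ (where the relevant swap on the positive half of the window is the exchange of $-b$ and $-a$). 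Proposition~\ref{propTechInvB} then applies to this $s_j$ and yields both $\ell(\omega s_j) = \ell(\omega)+1$ and $\Inv_B(\omega s_j) = \Inv_B(\omega) \cup \{(a,b)\} = \Inv_B(\tau)$. Invoking the standard fact that $\Inv_B$ is injective on $B_n$ (cf.~\cite{BB}), we conclude $\tau = \omega s_j$, so $\omega \lhd_R \tau$.

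I expect the main point requiring care to be the short case analysis in the converse direction that pairs each $B$-adjacent $(a,b)$ with the correct simple generator, since one must track signs and the special role of $s_0$; once this matching is set up, the statement is a formal consequence of Proposition~\ref{propTechInvB}. If the injectivity of $\Inv_B$ is not taken as known, it can be substituted by a short induction on $\ell(\tau)$ using the length identity $\ell = \inv_B$ from Lemma~\ref{LemBrentBjornTypeB} and Lemma~\ref{LemInvB-invB}, which keeps the proof entirely self-contained.
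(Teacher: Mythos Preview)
Your proof is correct and follows the same route as the paper, which simply records that Proposition~\ref{PropBAdja} is ``an immediate consequence of Proposition~\ref{propTechInvB}'' without further detail. You have spelled out what the paper leaves implicit: the forward direction is exactly the observation that $\ell(\omega s_j)=\ell(\omega)+1$ forces the first alternative of Proposition~\ref{propTechInvB}, and for the converse you supply the short case analysis matching a $B$-adjacent pair to the appropriate generator $s_j$, together with the injectivity of $\Inv_B$. Both additions are sound; in particular the injectivity is elementary (by sign-symmetry, $\Inv_B(\omega)$ determines the relative order of all $\omega^{-1}(k)$ for $k\in[\pm n]$, hence determines $\omega$) and does not rely on the later Corollary~\ref{CorolEquivInvWeakB}, so no circularity arises.
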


Proposition~\ref{PropBAdja} implies the direct implication ($\Rightarrow$) of \eqref{EqInvBWeak}. 
Note that a proof of the converse implication of \eqref{EqInvBWeak}, which is of fundamental importance for our purpose, will follow from the results of Section~\ref{SectionTypeBProof} (see Corollary~\ref{CorolEquivInvWeakB}), and we will postpone till there. 

Let us now introduce a way to visualize $B$-inversion sets. First, note that the $B$-inversion set of any element of $B_n$ is a subset of $\{(a,b) \in [\pm n]^2\ | \ |a|<b \}$.
One can easily represent the set $\{(a,b) \in [\pm n]^2\ | \ |a|<b \}$ considering the \emph{shifted diagram} $\lambda^s_n$ of the partition $(2n-1,2n-3,\ldots,1)$, as depicted on Figure~\ref{Fig1}. The coordinates of each box can be read thanks to the circled integers. 

From now on, we identify $\lambda_n^s$ with the set $\{(a,b) \in [\pm n]^2\ | \ |a|<b \}$.

\begin{figure}[!h] 
\includegraphics[width=0.65\textwidth]{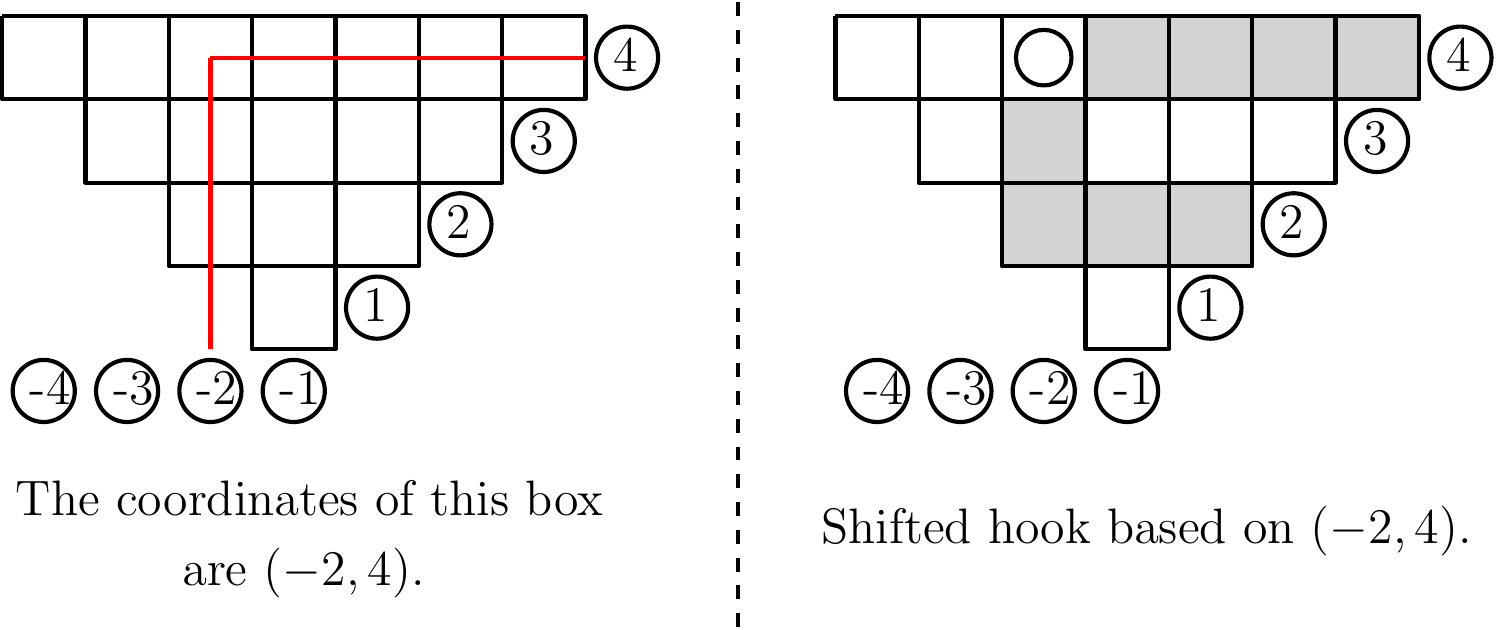}
\caption{}\label{Fig1}
\end{figure}

We now define a digraph structure $G$ on $\lambda_n^s$ (where the vertices are the boxes of the diagram), using the equivalent of hooks of Ferrers diagrams in the shifted case, namely \emph{shifted hooks} (as depicted on the right of Figure~\ref{Fig1}). The shifted hook based on $(a,b)$ in $\lambda_n^s$ is formally defined by 
\begin{align*}
\widetilde{H}(a,b):=\{ (a,b)\}\cup
\{(x,y)\in \lambda_n^s \ | \ \exists k \in &\mathbb{N} \ \text{such that} \ a<k<b \ \text{and} \\ 
(x,y)&=(k,b) \ {\rm or} \ (a,k) \ {\rm or} \ (-k,-a) \}.
\end{align*}
Following the methods of Section~\ref{SectSym}, we define a digraph structure $G$ on $\lambda_n^s$ by saying that there is an arc from $\mathfrak{c}$ to $\mathfrak{d}$ in $G$ if and only if $\mathfrak{c} \neq \mathfrak{d}$ and $\mathfrak{d}$ is in the shifted hook based on $\mathfrak{c}$. It appears that $G$ is acyclic and the out-degree of any box is an even number, so that the valuation $\theta(\mathfrak{c})=\frac{d^+(\mathfrak{c})}{2}$ is an OCV on $G$. Thus, $\B:=(G,\theta)$ is a valued digraph. Let us summarize this construction in a definition.

\begin{definition}
Let $G=(V,E)$ be the digraph such that 
\[V:=\lambda_n^s \ {\rm and} \ E:=\{(\mathfrak{c},\mathfrak{d}) \in (\lambda_n^s)^2 \ | \ \mathfrak{c} \neq \mathfrak{d} \ {\rm and} \ \mathfrak{d} \in \widetilde{H}(\mathfrak{c}) \}.\]
We denote by $\B=(G,\theta)$ the valued digraph such that for any $\mathfrak{c} \in \lambda_n^s$
\[ \theta(\mathfrak{c}):=\frac{d^+(\mathfrak{c})}{2}.\]
\end{definition}

One can easily check that the poset $(IS(\B),\subseteq)$ is isomorphic to the weak order on $(B_n,\leq_R)$ when $n=2$ or 3.
This situation is in fact general, as stated in the following theorem.

\begin{Theorem}\label{TheoTypB}
The posets $(B_n,\leq_R)$ and $(IS(\B),\subseteq)$ are isomorphic.
\end{Theorem}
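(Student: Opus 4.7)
The strategy is to follow the three-step program of Section~\ref{SectionSymProof}: first identify $IS(\B)$ with $\{\Inv_B(\omega) : \omega \in B_n\}$, then combine this identification with \eqref{EqInvBWeak} and Proposition~\ref{PropBAdja} to conclude the isomorphism. Concretely, I would introduce a type-$B$ analogue of the statistic $d_\sigma$ of Definition~\ref{DefStatisticSym}, show that it computes the peeled valuation $\theta_{\Inv_B(\omega)}$, and then translate $B$-adjacency in $\omega$ into erasability in $\B_{\Inv_B(\omega)}$.

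A natural candidate for $(a,b) \in \lambda_n^s \setminus \Inv_B(\omega)$ is
\[
d_\omega^B(a,b) \;:=\; \bigl|\{k \in \mathbb{N} \;:\; a<k<b,\ \omega^{-1}(a)<\omega^{-1}(k)<\omega^{-1}(b)\}\bigr|,
\]
the number of \emph{positive} integers $k$ strictly between $a$ and $b$ whose position in the full window of $\omega$ lies strictly between the positions of $a$ and $b$; restricting to $k>0$ matches the convention $k\in\mathbb{N}$ appearing in the definition of the shifted hook $\widetilde{H}(a,b)$. Following Lemma~\ref{LemSymTech}, I would prove by induction along a reduced expression $\omega = s_{j_1}\cdots s_{j_q}$ that, at each covering step $\sigma_i \lhd_R \sigma_{i+1}$, the variation of $d_{\sigma_i}^B(a,b)$ exactly matches the decrease of the peeled valuation. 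Proposition~\ref{propTechInvB} tells us that this step swaps one pair $(a_{i+1},b_{i+1})$ at positions $p,p+1$ and, by the mirror symmetry $\omega(-m) = -\omega(m)$, also the pair $(-b_{i+1},-a_{i+1})$ at positions $-p-1,-p$ (or produces a single swap across positions $-1,1$ when $j_{i+1}=0$); the cancellations between the three arc-families of $\widetilde{H}(a,b)$ and the collision at the diagonal box $(a,-a)$ are precisely what make the simple one-term expression for $d_\omega^B$ work. This yields the analogue of Proposition~\ref{PropSymCombinatInt}: $\theta_{\Inv_B(\omega)}(a,b) = d_\omega^B(a,b)$ for every $(a,b) \in \lambda_n^s \setminus \Inv_B(\omega)$.

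The analogue of Proposition~\ref{TechSym} then reads: for $A=\Inv_B(\omega)$ and $(a,b) \in \lambda_n^s \setminus A$, $(a,b)$ is erasable in $\B_A$ iff $a$ and $b$ are $B$-adjacent in $\omega$. The reverse direction is immediate from the combinatorial interpretation: $B$-adjacency forces $d_\omega^B(c,d) \geq 1$ at each predecessor $(c,d)$ of $(a,b)$ in $G$; such predecessors come in three types, namely $(c,b)$ with $c<a$ (from arcs of the form $(k,b)$, only when $a>0$), $(a,d)$ with $d>b$ (from arcs of the form $(a,k)$), and $(-b,d)$ with $d>-a$ (from arcs of the form $(-k,-a)$, only when $a<0$), and in each case the witness $k=b$, $k=a$ or $k=-a$ (respectively) gives the required positive contribution. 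For the forward direction, assuming $(a,b)$ erasable but not $B$-adjacent, some entry $c$ of the full window of $\omega$ lies strictly between $a$ and $b$ in position; since $d_\omega^B(a,b)=0$, the value $c$ must satisfy $c>b$, $0<c<a$, or $c<0$. In each case, choosing $c$ extremal in its region (minimal position for $c>b$, maximal position for $0<c<a$, and using the mirror symmetry together with the Type III predecessors for $c<0$) produces a predecessor of $(a,b)$ whose $d_\omega^B$-value is $0$, contradicting erasability. Corollary~\ref{CorolFinalA} then transfers verbatim to give $IS(\B) = \{\Inv_B(\omega) : \omega \in B_n\}$.

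To close the argument I would finally establish the outstanding converse of \eqref{EqInvBWeak} (Corollary~\ref{CorolEquivInvWeakB}): given $\Inv_B(\omega) \subseteq \Inv_B(\tau)$, Proposition~\ref{PropGradedPoset} produces a peeling sequence of $\B$ whose initial sections form a chain $\Inv_B(\omega) = A_0 \subset A_1 \subset \cdots \subset A_r = \Inv_B(\tau)$, each step of which is a cover relation in $(B_n,\leq_R)$ by Proposition~\ref{PropBAdja}; thus $\omega \leq_R \tau$. Together with the forward direction already proved, this yields the isomorphism $(B_n,\leq_R) \cong (IS(\B),\subseteq)$. The main technical obstacle is expected to be the converse of the adjacency/erasability equivalence, because the third family of arcs $(-k,-a)$ introduces a genuine type-$B$ subcase ($c<0$) that has no type-$A$ analogue and must be handled via the Type III predecessors and the mirror symmetry; particular care is also required at the diagonal boxes $(a,-a)$, where the $(a,k)$ and $(-k,-a)$ arc-families collide and threaten to disturb the bookkeeping.
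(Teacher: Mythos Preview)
Your overall strategy---define a statistic $d_\omega^B$, identify it with the peeled valuation, prove ``erasable $\Leftrightarrow$ $B$-adjacent'', and close via Proposition~\ref{PropGradedPoset} and Proposition~\ref{PropBAdja}---is exactly the paper's approach in Section~\ref{SectionTypeBProof}.

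The substantive discrepancy, and a genuine gap, is your restriction of $k$ to positive integers. The paper's $d_\omega$ (Definition~\ref{DefStatB}) counts \emph{all} $k\in[\pm n]$ with $a<k<b$ in the off-diagonal case $|a|<b$, and treats the diagonal $a=-b$ separately; your single formula agrees with the paper only on the diagonal and when $a>0$. You justify the restriction by the ``$k\in\mathbb{N}$'' appearing in the definition of $\widetilde H(a,b)$, but that is a slip in the paper: the subsequent arguments (notably the predecessor $(-b,-p)$ used in the proof of Proposition~\ref{PropBAdjEras}, which must reach $(a,b)$ via the arc-type $(-k,-c)$ with $k=-a$ even when $a>0$) only make sense if $k$ ranges over all of $[\pm n]$. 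With the full hook one has $\theta(a,b)=d_{Id}(a,b)$ in the paper's sense, not in yours; for instance $\theta(-2,3)=3$ for $n=3$, whereas your $d^B_{Id}(-2,3)=2$.

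Worse, with the literal $k\in\mathbb{N}$ hook your key implication ``erasable $\Rightarrow$ $B$-adjacent'' fails. For $n=3$ take $\omega$ with full window $[-2,3,-1,1,-3,2]$, so that $A=\Inv_B(\omega)=\{(1,3),(2,3),(-1,3),(-3,3)\}$. Then $d_\omega^B(1,2)=0$, and the only predecessors of $(1,2)$ lying outside $A$ are $(-1,2)$ and $(-2,2)$ (Type~I; Type~II is absent since $(1,3)\in A$; Type~III does not exist because $a=1>0$), both with $d_\omega^B$-value $1$. Hence $(1,2)$ is erasable in $\B_A$, yet $1$ and $2$ are not adjacent in $\omega$, and one checks that $A\cup\{(1,2)\}$ is not a $B$-inversion set. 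With the full hook the additional predecessor $(-2,3)$ (reached via $k=-1$) has value $0$ and correctly blocks erasability. This is precisely the hole in your sketch of the converse: when $a>0$ there are no Type~III predecessors in your digraph, so ``mirror symmetry together with the Type~III predecessors'' cannot handle a negative entry sitting between $a$ and $b$; the paper dispatches this subcase via the predecessor $(-b,-p)$, which exists only with the full hook.

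A minor aside: your witnesses for the forward direction are transposed---for Type~I the witness is $k=a$, and for Type~II it is $k=b$.
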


The proof of this theorem follows the exact same pattern as the one of Theorem~\ref{SymCase}. However, many technical difficulties appear in the $B_n$ case, so that we detail completely the proofs in the following section.

\subsection{Proof of Theorem~\ref{TheoTypB}}\label{SectionTypeBProof}

In this section, we will show that $$IS(\B)=\{ \Inv_B(\omega) \ | \  \omega  \in B_n \},$$ which will imply Theorem~\ref{TheoTypB}. We follow the same method as in Section~\ref{SectionSymProof} and we divide our proof into three steps. First, for any $\omega \in B_n$ we define a statistic $d_{\omega}$ on $\lambda_n^s\setminus \Inv_B(\omega)$ (Definition~\ref{DefStatB}). Then by using the notion of adjacency (see \ref{DefAdjaB}) in $B_n$ we provide an alternative definition of $d_{\omega}$ (Lemma~\ref{LemAlterDefStatB}), leading to a combinatorial interpretation of the valuations appearing when one performs the peeling process on $\B$ (Proposition~\ref{TypeBPropTech}). Finally, we prove that $IS(\B)=\{ \Inv_B(\omega) \ | \ \omega \in B_n \}$ by using this combinatorial interpretation (Corollary~\ref{CorolPrincB}). Moreover, as a consequence we will obtain the converse implication of \eqref{EqInvBWeak} (Corollary~\ref{CorolEquivInvWeakB}), so that $(IS(\B),\subseteq)$ and $(B_n,\leq_R)$ are isomorphic. \\

We begin with the definition of the statistic $d_{\omega}$.

\begin{definition}\label{DefStatB}
Let $\omega \in B_n$ and $(a,b) \in \lambda_n^s \setminus \Inv_B(\omega)$. We define the statistic $d_{\omega}(a,b)$ as follows:
\begin{itemize}
\item if $|a|<b$, then $d_{\omega}(a,b):=|\{a<k<b \ | \ \omega^{-1}(a)<\omega^{-1}(k)<\omega^{-1}(b) \}|$;
\item if $-a=b$, then $d_{\omega}(a,-a):=|\{1 \leq k <-a \ | \ \omega^{-1}(a)<\omega^{-1}(k)<\omega^{-1}(-a) \}|$.
\end{itemize}
\end{definition}

The statistic $d_{\omega}$ admits the following characterization.

\begin{lemma}\label{LemAlterDefStatB}
Let $\omega \in B_n$, and $(a,b) \in \lambda_n^s \setminus \Inv_B(\omega)$, then we have $$d_{\omega}(a,b)=d_{Id}(a,b)-|\widetilde{H}(a,b) \cap \Inv_B(\omega)|.$$ 
\end{lemma}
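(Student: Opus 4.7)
My plan is to prove the identity by induction on $\ell(\omega)$, mirroring the strategy used for Lemma~\ref{LemSymTech}. The base case $\omega = Id$ is immediate: $\Inv_B(Id) = \emptyset$, so the right-hand side collapses to $d_{Id}(a, b)$, as does the left-hand side. For the inductive step, I would fix a reduced decomposition $s_{j_1} \cdots s_{j_q}$ of $\omega$, set $\omega_i := s_{j_1} \cdots s_{j_i}$, and induct on $i$. Each $\omega_i$ covers $\omega_{i-1}$ in the right weak order, so Proposition~\ref{propTechInvB} produces a unique $B$-adjacent pair $(a_i, b_i)$ in $\omega_{i-1}$ with $\Inv_B(\omega_i) = \Inv_B(\omega_{i-1}) \sqcup \{(a_i, b_i)\}$. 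The problem then reduces to establishing the one-step identity
$$d_{\omega_{i-1}}(a, b) - d_{\omega_i}(a, b) = \mathbf{1}\bigl[(a_i, b_i) \in \widetilde{H}(a, b)\bigr]$$
for every $(a, b) \in \lambda_n^s \setminus \Inv_B(\omega_i)$; combined with the inductive hypothesis this yields the statement for $\omega_i$.

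This one-step identity I would verify by direct case analysis on $(a_i, b_i)$. Right multiplication by $s_{j_i}$ exchanges, in the full window notation, the consecutive entries $a_i, b_i$ and, via the signed-permutation symmetry $\omega(-p) = -\omega(p)$, simultaneously exchanges $-b_i, -a_i$ at the mirrored positions (when $a_i = -b_i$, the single exchange at positions $-1, 1$ is the whole operation). Since only the four values in $\{a_i, b_i, -a_i, -b_i\}$ change position, only these can affect the counted set
$$K_{\omega_{i-1}}(a,b) := \{k \in \mathbb{N} : a < k < b,\; \omega_{i-1}^{-1}(a) < \omega_{i-1}^{-1}(k) < \omega_{i-1}^{-1}(b)\}$$
(and its analogue with $1 \le k < -a$ when $a = -b$). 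Running through the three shapes $(a_i, b_i) = (k, b)$, $(a, k)$, $(-k, -a)$ with $k \in \mathbb{N}$, $a < k < b$ that exhaust $\widetilde{H}(a, b) \setminus \{(a, b)\}$, I would check that in each case exactly one element leaves the counted set, yielding $d_{\omega_{i-1}}(a,b) - d_{\omega_i}(a,b) = 1$; for all other covers one verifies that the counted set is preserved.

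The main obstacle will be the careful bookkeeping of the simultaneous symmetric swap. In particular, the $s_0$-case (exchange at positions $-1, 1$ across the sign boundary) and the boundary case $k = -a$ (where the hook entries $(a, k)$ and $(-k, -a)$ coincide into a single box, so they must account for a single decrement rather than two) each require individual attention. The $a = -b$ case of Definition~\ref{DefStatB} must also be run in parallel with the generic $|a| < b$ case: the shifted hook $\widetilde{H}(a, -a)$ contains no $(a, k)$ box for $k < -a$, which is exactly matched by the restriction of the counted range to $\{1, \ldots, -a - 1\}$ on the left-hand side. The technical heart of the proof lies in verifying that none of the symmetric mirror swaps accidentally slips some $\pm k$ into or out of the counted set beyond the single decrement already accounted for by the canonical pair $(a_i, b_i)$.
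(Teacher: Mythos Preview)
Your proposal is correct and follows essentially the same approach as the paper's own proof: both argue by induction along a reduced decomposition of $\omega$, reducing to a one-step case analysis on whether the new inversion $(a_i,b_i)$ lies in $\widetilde{H}(a,b)$, with the sub-cases $|a|<b$ versus $b=-a$ treated separately. Your write-up is in fact slightly more explicit than the paper's about the delicate points (the $k=-a$ coincidence and the simultaneous mirror swap), which the paper handles more briefly under ``similar arguments.''
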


\begin{proof}
Let $s_1\cdots s_q$ be a reduced decomposition of $\omega$ and denote by $\omega_i$ the signed permutation $s_1\cdots s_i$, $0 \leq i \leq q$. We will prove by induction on $i$ that the lemma is true for $\omega_i$.

The property is obviously true for $Id$. Let $i\geq 0$ be such that the property is true.
For the sake of clarity, let us denote by $\delta_j$ the integer $d_{\omega_j}(a,b)$. Since $s_1 \cdots s_q$ is reduced, we have $\sigma_i \lhd_R \sigma_{i+1}$, hence there exists a unique $(a_{i+1},b_{i+1})$ in $\Inv_B(\omega_{i+1}) \setminus \Inv_B(\omega_{i})$. We now show how one can deduce the value of $\delta_{i+1}$ from both $\delta_i$ and $(a_{i+1},b_{i+1})$. We split our study into three cases
\begin{itemize}
\item (Case $(a_{i+1},b_{i+1})=(a,k)$ with $a<k<b$ and $|a|\leq k$) $\omega_{i+1}$ is obtained from $\omega_i$ by swapping the positions of $a$ and $k$ and the positions of $-k$ and $-a$. Furthermore, $a$ and $k$ are adjacent in $\omega_i$ and $(a,b) \notin \Inv_B(\omega_i)$, so that we have 
\[ \omega_i=[\ldots, \ a \ , \ k \ ,\ldots, \ b \ ,\ldots]. \] 
We now distinguish two sub-cases.
\begin{itemize}
\item If $|a|<b$, then the full window notation of $\omega$ has one of the three following forms:
\begin{align*}
&[\ldots, \ a \ , \ k \ , \ldots , \ -k \ , \ -a \ , \ldots, \ b \ ,\ldots], \\
{\rm or} \ &[\ldots, \ a \ , \ k \ , \ldots , \ b \ ,\ldots, \ -k \ , \ -a \ , \ldots], \\
{\rm or} \ &[\ldots, \ -k \ , \ -a \ , \ldots , \ a \ , \ k \ , \ldots, \ b \ ,\ldots].
\end{align*}
Therefore, in all cases either both $-k$ and $-a$ are between $a$ and $b$ in $\omega_i$, or both $-k$ and $-a$ are not between $a$ and $b$. Hence, it is again the case in $\omega_{i+1}$, so we have $\delta_{i+1}=\delta_i-1$. Moreover, we also have $$|\widetilde{H}(a,b) \cap \Inv_B(\omega_{i+1})|=|\widetilde{H}(a,b) \cap \Inv_B(\omega_i)|+1.$$
\item If $b=-a$, then we have 
\[ \omega_i=[\ldots, \ a \ , \ k \ , \ldots, \ -k \ , \ -a \ ,\ldots ], \]
so that both $k$ and $-k$ lie between $a$ and $-a$ in $\omega_i$, and it is no longer the case in $\omega_{i+1}$. Hence, by definition of $d_{\omega}(a,-a)$, we have $\delta_{i+1}=\delta_i-1$. Furthermore, we also have $$|\widetilde{H}(a,b) \cap \Inv_B(\omega_{i+1})|=|\widetilde{H}(a,b) \cap \Inv_B(\omega_i)|+1.$$
\end{itemize}
\item (case $(a_{i+1},b_{i+1})=(k,b)$ or $(-k,-a)$ with $a<k<b$) using similar arguments as in the previous case, we show that $\delta_{i+1}=\delta_i-1$ (notice that the case $(a_{i+1},b_{i+1})=(-b,-k)$ cannot occur thanks to the condition $|a| \leq b$) and $$|\widetilde{H}(a,b) \cap \Inv_B(\omega_{i+1})|=|\widetilde{H}(a,b) \cap \Inv_B(\omega_i)|+1.$$
\item Otherwise, we have that both $a_{i+1}$ and $b_{i+1}$ either lie between $a$ and $b$ in $\omega_i$, or do not lie between $a$ and $b$ (and similarly for $-b_{i+1}$ and $-a_{i+1}$). Thus, it is still true in $\omega_{i+1}$, so that $\delta_{i+1}=\delta_i$ and $$|\widetilde{H}(a,b) \cap \Inv_B(\omega_{i+1})|=|\widetilde{H}(a,b) \cap \Inv_B(\omega_i)|.$$
\end{itemize}
By induction hypothesis $\omega_{i+1}$ satisfy the property, so that the lemma is proved.
\end{proof}

We now give a combinatorial interpretation of $\theta_A$ for some $A \in IS(\B)$

\begin{proposition}\label{TypeBPropTech}
Let $A \in IS(\A)$, if there exists $\omega \in B_n$ such that $A=\Inv_B(\omega)$, then for all $(a,b) \in \lambda_n^s \setminus \Inv_B(\omega)$, we have $\theta_A(a,b)=d_{\omega}(a,b)$.
\end{proposition}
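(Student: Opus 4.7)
The plan is to follow the same three-step pattern as the proof of Proposition~\ref{PropSymCombinatInt} in the type $A$ case: first translate $\theta_A(a,b)$ into a count involving $\Inv_B(\omega)$ via the peeling process, then identify the initial valuation $\theta(a,b)$ with $d_{Id}(a,b)$ by directly enumerating the shifted hook, and finally invoke Lemma~\ref{LemAlterDefStatB}.

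The first step is essentially formal. By Definition~\ref{DefPeelSeq}, each time a vertex $z$ with an arc from $(a,b)$ to $z$ is peeled, the value $\theta$ at $(a,b)$ drops by $1$. The out-neighbours of $(a,b)$ in $\B$ are exactly the boxes of $\widetilde{H}(a,b)\setminus\{(a,b)\}$, and since by hypothesis $(a,b)\notin A=\Inv_B(\omega)$, we obtain
\[ \theta_A(a,b) \;=\; \theta(a,b) \;-\; \bigl|\widetilde{H}(a,b) \cap \Inv_B(\omega)\bigr|. \]

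The main work lies in Step 2: showing $\theta(a,b) = d_{Id}(a,b)$. Since $\theta(a,b) = d^+(a,b)/2$, this reduces to proving $|\widetilde{H}(a,b)|-1 = 2\,d_{Id}(a,b)$ by a careful enumeration of the three families $(k,b)$, $(a,k)$, $(-k,-a)$ defining the shifted hook. The analysis splits into three sub-cases: (i) $a>0$, where the hook is an ordinary Ferrers hook with arm and leg of length $b-a-1$; (ii) $a<0$ and $b>-a$, where all three families contribute, one has to exclude the integer $k=0$ (which is not in $[\pm n]$) and, crucially, observe that the diagonal pair $(a,-a)$ is produced both by the $(a,k)$ family (with $k=-a$) and by the $(-k,-a)$ family (with $k=-a$), so is double-counted; and (iii) $b=-a$, where the arm $(a,k)$ is empty (one would need $-a\leq k<-a$), while the families $(k,-a)$ and $(-k,-a)$ describe identical sets of boxes under $k\mapsto -k$. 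In each case the arithmetic comes out to exactly $2\,d_{Id}(a,b)$, where one uses that $d_{Id}(a,b)$ counts integers in $(a,b)\cap[\pm n]$ satisfying the obvious identity-inversion condition.

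The third step is then a one-line combination: substituting $\theta(a,b)=d_{Id}(a,b)$ into the Step 1 identity and applying Lemma~\ref{LemAlterDefStatB} (which states exactly $d_{\omega}(a,b) = d_{Id}(a,b) - |\widetilde{H}(a,b)\cap\Inv_B(\omega)|$) yields $\theta_A(a,b)=d_{\omega}(a,b)$. I expect the main obstacle to be the bookkeeping in Step 2(ii): the interplay between the $k=0$ exclusion and the diagonal double-count must be handled correctly both for $\theta$ to remain integer-valued and for it to match $d_{Id}$. This is precisely the combinatorial reason the choice $\theta=d^+/2$ on the shifted diagram is the right one for reproducing the type $B$ weak order.
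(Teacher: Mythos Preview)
Your proposal is correct and follows exactly the same structure as the paper's proof. The paper's argument is extremely terse: it simply asserts that $\theta(a,b)=d_{Id}(a,b)$, observes that the peeling process and the definition of $\B$ via shifted hooks give $\theta_A(a,b)=\theta(a,b)-|\widetilde{H}(a,b)\cap\Inv_B(\omega)|$, and then invokes Lemma~\ref{LemAlterDefStatB}. Your Steps~1 and~3 are identical to this, and your Step~2 supplies the case analysis behind the bare assertion $\theta(a,b)=d_{Id}(a,b)$ that the paper leaves to the reader; in particular, the diagonal double-count of $(a,-a)$ in case~(ii) and the coincidence of the $(k,b)$ and $(-k,b)$ families in case~(iii) are precisely the bookkeeping that makes $d^+(a,b)$ come out even and equal to $2\,d_{Id}(a,b)$.
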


\begin{proof}
Note that $\theta(a,b)=d_{Id}(a,b)$. Thus, by the definitions of the underlying digraph of $\B$ using shifted hooks and of the peeling process, and thanks to Lemma~\ref{LemAlterDefStatB}, the property follows.
\end{proof}

Proposition~\ref{TypeBPropTech} allows us to link the poset $(IS(\B),\subseteq)$ and the weak order on $B_n$, as it is shown in the next proposition.

\begin{proposition}\label{PropBAdjEras}
Let $A \in IS(\B)$. If there exists $\omega \in B_n$ such that $A=\Inv_B(\omega)$, then for all $(a,b)\in \lambda_n^s \setminus A$, we have that $(a,b)$ is erasable in $\B_A$ if and only if $a$ and $b$ are adjacent in $\omega$.
\end{proposition}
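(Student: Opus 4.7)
The proof follows the blueprint of Proposition~\ref{TechSym} from the type $A$ case, with Proposition~\ref{TypeBPropTech} playing the central role: it lets me translate the condition $\theta_A(c,d) = 0$ into the purely combinatorial statement $d_\omega(c,d) = 0$ for any $(c,d) \notin A$. Throughout I use the symmetry $\omega^{-1}(-x) = -\omega^{-1}(x)$ of signed permutations.

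Forward direction: assume $a, b$ are $B$-adjacent in $\omega$. Then nothing lies strictly between them in the full window notation, so $d_\omega(a,b) = 0$ and hence $\theta_A(a,b) = 0$. I must verify that $\theta_A(c,d) \geq 1$ for every predecessor $(c,d)\notin A$ of $(a,b)$. Unpacking the shifted hook, these predecessors split into three families: type (1) is $(c,b)$ with $c < a$ and $|c|\leq b$; type (2) is $(a,d)$ with $d > b$; and type (3) is $(-b,d)$, arising from the $(-k,-c)$ branch with $k = -a$, which forces $a \leq -1$ (so that $k \in \mathbb{N}$) and $d > -a$. For type (1) the witness value is $k = a$, and $\omega^{-1}(c) < \omega^{-1}(a) < \omega^{-1}(b)$ follows by combining $(c,b)\notin A$ with the adjacency of $a, b$ (the hypothesis $\omega^{-1}(c) < \omega^{-1}(b)$ plus $c \neq a$ forces $\omega^{-1}(c) < \omega^{-1}(a)$). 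For type (2) the witness is $k = b$, by a symmetric argument. For type (3) the witness is $k = -a$: one has $-b < -a < d$, and the equivariance $\omega^{-1}(-a) = -\omega^{-1}(a)$ combined with $(-b, d)\notin A$ places $\omega^{-1}(-a)$ strictly between $\omega^{-1}(-b)$ and $\omega^{-1}(d)$, the only failure mode $\omega^{-1}(-a) = \omega^{-1}(d)$ being ruled out because it would force $d = -a$ and contradict $d \geq b \geq |a|$. The diagonal predecessor $(-b, b)$, which is the overlap of families (1) and (3), is covered by $k = a$ when $a \geq 1$ and by $k = -a$ when $a \leq -1$.

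Converse direction: assume $(a,b)$ is erasable in $\B_A$ and, for contradiction, that $a, b$ are not $B$-adjacent in $\omega$. Set $p = \omega^{-1}(a)$ and $p' = \omega^{-1}(b)$; some position $t \neq 0$ lies strictly between $p$ and $p'$ in the full window, carrying a value $v = \omega(t) \neq a, b$. Since $d_\omega(a,b) = 0$, one has $v < a$ or $v > b$. Let $c^-$ be the largest such $v < a$ at a position in $(p,p')$ and $c^+$ the smallest such $v > b$ (when they exist). Mimicking the type $A$ proof, if $c^- \geq -b$ then $(c^-, b)$ is a type (1) predecessor of $(a,b)$ not in $A$, and the maximality of $c^-$ together with $d_\omega(a,b) = 0$ forces $d_\omega(c^-, b) = 0$, contradicting erasability. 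Similarly, if $c^+$ exists then $(a, c^+)$ is a type (2) predecessor with $d_\omega(a, c^+) = 0$. The remaining configuration, namely $c^- < -b$ and no $c^+$, is handled by the mirror symmetry: values below $-b$ at positions in $(p, p')$ correspond to values above $b$ at the mirror positions in $(-p', -p)$, and this combined with the diagonal predecessor $(-b, b)$ (valid whenever $|a| < b$) yields the contradiction.

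The main obstacle is the bookkeeping in the converse sub-case where every intermediate value satisfies $v < -b$: the clean "$v<a$ versus $v>b$" dichotomy from type $A$ is insufficient here, and one must repeatedly invoke the symmetry $\omega(-i) = -\omega(i)$ together with the precise form of the shifted hook to exhibit a predecessor with $\theta_A = 0$. A secondary source of case-splitting is the existence in type $B$ of the diagonal vertices $(-b, b)$, which appear simultaneously in families (1) and (3) of the predecessor classification and have a different definition of $d_\omega$, requiring a separate sign analysis in the forward direction.
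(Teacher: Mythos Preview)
Your overall strategy matches the paper's exactly: reduce everything to the statistic $d_\omega$ via Proposition~\ref{TypeBPropTech}, classify the predecessors of $(a,b)$ through the shifted hook, and in the converse produce a predecessor with vanishing $d_\omega$ by extremizing the rogue intermediate value. Your forward direction is fine and is a reparametrization of the paper's three cases.

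The genuine gap is in the last sub-case of the converse. When every intermediate value is below $-b$ (your ``$c^-<-b$ and no $c^+$''), you appeal to the diagonal predecessor $(-b,b)$ and a mirror-symmetry argument, but this does not close. First, $(-b,b)$ need not lie outside $A$: one has $(-b,b)\in A$ precisely when $\omega^{-1}(b)<0$, and nothing in your hypotheses rules this out, so $(-b,b)$ may not even be a vertex of $\B_A$. Second, even when $(-b,b)\notin A$, the count $d_\omega(-b,b)$ runs over positions in $(-\omega^{-1}(b),\omega^{-1}(b))$, which is not the interval $(\omega^{-1}(a),\omega^{-1}(b))$ you control; in particular the value $a$ itself (when $1\le a<b$) may sit there and make $d_\omega(-b,b)\ge 1$. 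The paper instead uses the predecessor $(-b,\,-c^-)$: since $c^-<-b$ one has $-c^->b$, so $(-b,-c^-)\in\lambda_n^s$; the position of $-c^-$ is $-\omega^{-1}(c^-)\in(-\beta,-\alpha)$, which lies to the right of $-\beta=\omega^{-1}(-b)$, so $(-b,-c^-)\notin A$; and any $k$ with $-b<k<-c^-$ between them would give, after negation, an intermediate value $-k$ strictly larger than $c^-$ at a position in $(\alpha,\beta)$, contradicting the maximality of $c^-$. That is the missing step you need to fill in, after which the argument is complete and coincides with the paper's.
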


\begin{proof}
Let $(a,b) \in \lambda_n^s \setminus A$ and assume that $a$ and $b$ are adjacent in $\omega$. Our aim is to prove that $(a,b)$ is erasable in $\B_A$. 

First, note that $\theta_A(a,b)=d_{\omega}(a,b)=0$. Let $(c,d) \in \lambda_n^s \setminus A$ be such that there is an arc from $(c,d)$ to $(a,b)$. 
We will prove that $\theta_A(c,d) \neq 0$. Equivalently, we will show that $d_{\omega}(c,d) \neq 0$. By definition of the underlying digraph of $\B$, we have only three cases which we now detail.
\begin{itemize}
\item ($(a,b)=(c,p)$ such that $|c|\leq p < d$). Since $a$ and $b$ are adjacent in $\omega$, we have that $c$ and $p$ are adjacent in $\omega$. Moreover, we have $(c,d) \notin A=\Inv_B(\omega)$, so that $c$ is on the left of $d$ in the window notation of $\omega$. It follows that we have
\[ \omega= [ \ldots, \ c \ , \ p \ , \ldots , \ d \ , \ldots ].  \]
However, we have $c < p<d$ by hypothesis, hence $d_{\omega}(c,d) \geq 1$.
\item ($(a,b)=(q,d)$ with $c<q<d$). We have that $q$ and $d$ are adjacent in $\omega$. Moreover, we have $(c,d) \notin\Inv_B(\omega)$, so that we have
\[ \omega= [ \ldots, \ c \ ,  \ldots , \ q \  , \ d \ , \ldots ]. \]
Nevertheless, we have $c<q<d$ by hypothesis, hence $d_{\omega}(c,d) \geq 1$.
\item ($(a,b)=(k,-c)$ with $c \leq k < -c$). First, note that we have $c<-k \leq -c$. Moreover, we have $(c,d) \in \lambda_n^s$, so that $-c \leq |c| \leq d$. We thus have $c<-k \leq -c \leq d$. Assume by contradiction that $-k=d$, then $-c=d$, hence we have
\[ (a,b)=(-d,d)=(k,-c)=(c,d).\]
Consequently, there is an arc from $(a,b)$ to $(a,b)$ in the underlying graph of $\B$, and this is absurd. Therefore, we have $c<-k<d$. 

Let us now show that $-k$ lies between $c$ and $d$ in $\omega$. By hypothesis, we have
\[ \omega=[ \ldots, \ k \  , \ -c \  , \ldots ], \]
but $\omega$ is a signed permutation, so that we have
\[ \omega=[ \ldots, \ c \  , \ -k \  , \ldots ]. \]
However, $(c,d) \notin \Inv_B(\omega)$, hence we have 
\[ \omega=[ \ldots, \ c \  , \ -k \  , \ldots , \ d \ , \ldots ]. \]
Therefore, if $-c \neq d$, then we have $d_{\omega}(c,d) \geq 1$. If $-c=d$, then we have 
\[ \omega=[ \ldots, \ c \  , \ -k \  , \ldots , \ k \ , \ -c \ , \ldots ], \]
and we also have $c<-k<k<-c$, so that $d_{\omega}(c,d) \geq 1$
\end{itemize}
In all cases, we have $d_{\omega}(c,d) \geq 1$, but $\theta(c,d)=d_{\omega}(c,d)$ by Proposition~\ref{TypeBPropTech}, hence $\theta(c,d) \geq 1$. Thus, we just proved that for all box $\mathfrak{c} \in \lambda_n^s\setminus A$, if there is an arc from $\mathfrak{c}$ to $(a,b)$, then $\theta(\mathfrak{c}) \geq 1$. Consequently, $(a,b)$ is erasable in $\B_A$.

Let us now prove the converse. Let $(a,b) \in \lambda_n^s \setminus A$ be erasable in $\B_A$ and assume by contradiction that $a$ and $b$ are not adjacent in $\omega$. We divide the study into two cases. 
\begin{itemize}
\item (Case $a=-b$) Since $-b$ and $b$ are not adjacent in $\omega$, there exists $k$ lying between $-b$ and $b$ in $\omega$. By symmetry, both $k$ and $-k$ lie between $-b$ and $b$, thus we can suppose that $k>0$. Furthermore, $d_{\omega}(-b,b)=0$, so that $k>b$. Let us consider $p>b$ minimal lying between $a$ and $b$ and let $q$ be an integer lying between $-b$ and $p$ in $\omega$ (if such a $q$ exists). Then, we have 
\[ \omega=[ \ldots, \ -b \ , \ldots , \ q \ , \ldots ,  \ p \ , \ldots ,  \ b \ , \ldots ], \]
so that $q$ is between $-b$ and $b$ in $\omega$. Moreover, we have the following facts:
\begin{itemize}
\item by minimality of $p$, $q \notin \{b,b+1,\ldots,p\}$;
\item since $d_{\omega}(-b,b)=0$, $q \notin \{-b,-b+1,\ldots,b\}.$
\end{itemize}
Consequently, $q \notin \{-b,-b+1,\ldots,p\}$, hence $d_{\omega}(-b,p)=0=\theta_A(-b,p)$. However, there is an arc from $(-b,p)$ to $(-b,b)$, and this is a contradiction since $(-b,b)$ is erasable. 
\item (Case $|a|<b$) There exists $k$ lying between $a$ and $b$ such that either $k>b$ or $k<a$. In the first case, similar arguments as in the previous case lead to a contradiction with the fact that $(a,b)$ is erasable. In the second case, we consider $p<a$ maximal lying between $a$ and $b$. We have the following two sub-cases.
\begin{itemize}
\item If $-b \leq p < a$, then for each $q$ between $p$ and $b$ in $\omega$, we have either $q<p$ by maximality, or $q>b$ because $d_{\omega}(a,b)=0$. Thus, $d_{\omega}(p,b)=0$, so that $\theta(p,b)=0$. but there is an arc from $(p,b)$ to $(a,b)$, hence it contradicts the fact that $(a,b)$ is erasable.
\item If $p<-b$, then we will prove that $d_{\omega}(-b,-p)=0$. For that purpose, assume by contradiction that $d_{\omega}(-b,-p)=0$. Thus,
 there exists $q$ between $-b$ and $-p$ in $\omega$ such that $-b<q<-p$. Then, we have $p<-q<b$. Moreover, since $\omega$ is a signed permutation we have
\begin{equation}\label{EquationIntermedTypeB}
\omega=[ \ldots, \ p \ , \ldots , \ -q \ , \ldots ,  \ b \ , \ldots ],
\end{equation}
but $p$ is between $a$ and $b$ in $\omega$, hence we have
\[ \omega=[ \ldots, \ a \ , \ldots , \ -q \ , \ldots ,  \ b \ , \ldots ]. \]
Therefore, we have $p<-q<b$ and by maximality of $p$, we have $a \leq -q$. Since $p$ is between $a$ and $b$ in $\omega$, thanks to \ref{EquationIntermedTypeB}, we have $a \neq -q$. Eventually, we have $a<-q<b$, and this is absurd since $d_{\omega}(a,b)=0$. Consequently, we have $d_{\omega}(-b,-p)=0$, so that $\theta(-b,-p)=0$. But there is an arc from $(-b,-p)$ to $(a,b)$, hence it contradicts the fact that $(a,b)$ is erasable.
\end{itemize}
\end{itemize}
Finally, in all cases we obtain a contradiction. Thus, $a$ and $b$ are adjacent in $\omega$ and this concludes the proof.
\end{proof}

With Proposition~\ref{PropBAdjEras}, one can prove the following result using exactly the same method as in the proof of Corollary~\ref{CorolFinalA}.

\begin{corollary}\label{CorolPrincB}
$IS(\B)=\{\Inv_B(\omega)\ | \ \omega \in B_n \}$.
\end{corollary}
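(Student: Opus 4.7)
The plan is to mirror the proof of Corollary~\ref{CorolFinalA}, proving both inclusions through parallel inductions that exploit the dictionary between $B$-adjacency in a signed permutation and erasability in the peeled digraph, as encoded in Proposition~\ref{PropBAdjEras}.

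For the inclusion $IS(\B)\subseteq\{\Inv_B(\omega)\mid\omega\in B_n\}$, I would start from a peeling sequence $L=[(a_1,b_1),\ldots,(a_N,b_N)]\in PS(\B)$ and recursively build signed permutations $\omega_0=Id,\omega_1,\ldots,\omega_N$ satisfying $\Inv_B(\omega_k)=\{(a_1,b_1),\ldots,(a_k,b_k)\}$. The base case uses $\Inv_B(Id)=\emptyset$. In the inductive step, the vertex $(a_k,b_k)$ is erasable in $\B_{\Inv_B(\omega_{k-1})}$ by definition of the peeling process, so Proposition~\ref{PropBAdjEras} implies that $a_k$ and $b_k$ are $B$-adjacent in $\omega_{k-1}$; Proposition~\ref{propTechInvB} then supplies a generator $s_j$ such that $\omega_k:=\omega_{k-1}s_j$ has inversion set $\Inv_B(\omega_{k-1})\cup\{(a_k,b_k)\}$, closing the induction.

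For the reverse inclusion, given any $\omega\in B_n$ I would fix a reduced decomposition $\omega=s_{i_1}\cdots s_{i_N}$, set $\omega_k=s_{i_1}\cdots s_{i_k}$, and show inductively that $\Inv_B(\omega_k)\in IS(\B)$. The base case is immediate. Since the decomposition is reduced, Proposition~\ref{propTechInvB} guarantees that at each step some pair $(a_k,b_k)$, which is $B$-adjacent in $\omega_{k-1}$, is added to the inversion set; Proposition~\ref{PropBAdjEras} then gives that $(a_k,b_k)$ is erasable in $\B_{\Inv_B(\omega_{k-1})}$, so any peeling sequence realising $\Inv_B(\omega_{k-1})$ as an initial section can be extended by $(a_k,b_k)$, which yields $\Inv_B(\omega_k)$ as a new initial section.

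Since the delicate content is already concentrated in Propositions~\ref{propTechInvB} and~\ref{PropBAdjEras}, I do not expect any significant obstacle; the two inductions are essentially formal. The one subtle point worth flagging is the necessity of using \emph{reduced} decompositions in the forward direction so that Proposition~\ref{propTechInvB} always enlarges the inversion set rather than shrinking it, and of invoking the two halves of Proposition~\ref{PropBAdjEras} (``$B$-adjacent $\Rightarrow$ erasable'' and ``erasable $\Rightarrow$ $B$-adjacent'') on the correct sides of the respective inductions.
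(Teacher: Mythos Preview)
Your proposal is correct and follows essentially the same route as the paper, which simply points to the method of Corollary~\ref{CorolFinalA} with Proposition~\ref{PropBAdjEras} in place of Proposition~\ref{TechSym}. If anything, you are more explicit than the paper: the proof of Corollary~\ref{CorolFinalA} only spells out the inclusion $IS(\A)\subseteq\{\Inv(\sigma)\}$ and leaves the converse to the phrase ``this is enough'', whereas you write out both inductions and identify precisely which implication of Proposition~\ref{PropBAdjEras} is used on each side.
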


This result has the following important consequence (which gives the converse direction of \eqref{EqInvBWeak}).

\begin{corollary}\label{CorolEquivInvWeakB}
Let $\sigma,\omega\in B_n$, then $\sigma \leq_R \omega$ if and only if $\Inv_B(\sigma)\subseteq \Inv_B(\omega)$.
\end{corollary}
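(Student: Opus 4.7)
The plan is to leverage the graded structure of $(IS(\B),\subseteq)$ established in Proposition~\ref{PropGradedPoset} together with the explicit bijection $\omega \mapsto \Inv_B(\omega)$ between $B_n$ and $IS(\B)$ given by Corollary~\ref{CorolPrincB}. The direct implication ($\Rightarrow$) has already been recorded as a consequence of Proposition~\ref{PropBAdja}, so the whole task is the converse direction.

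Assume $\Inv_B(\sigma) \subseteq \Inv_B(\omega)$, and set $k := |\Inv_B(\sigma)|$, $q := |\Inv_B(\omega)|$. By Corollary~\ref{CorolPrincB} both $\Inv_B(\sigma)$ and $\Inv_B(\omega)$ belong to $IS(\B)$, so Proposition~\ref{PropGradedPoset} supplies a peeling sequence $L=[x_1,x_2,\ldots]\in PS(\B)$ with $\Inv_B(\sigma)=\{x_1,\ldots,x_k\}$ and $\Inv_B(\omega)=\{x_1,\ldots,x_q\}$. For every $i$ with $k \leq i \leq q$ the initial section $A_i:=\{x_1,\ldots,x_i\}$ lies in $IS(\B)$, and a second application of Corollary~\ref{CorolPrincB} produces $\tau_i \in B_n$ such that $\Inv_B(\tau_i)=A_i$; in particular $\tau_k=\sigma$ and $\tau_q=\omega$.

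It remains to turn this chain in $IS(\B)$ into a saturated chain in $(B_n,\leq_R)$. Fix $i$ with $k \leq i < q$ and write $x_{i+1}=(a,b)$. By construction of $L$, the vertex $x_{i+1}$ is erasable in $\B_{A_i}$; applied to $A=A_i=\Inv_B(\tau_i)$, Proposition~\ref{PropBAdjEras} then forces $a$ and $b$ to be $B$-adjacent in $\tau_i$. Since $\Inv_B(\tau_{i+1})=\Inv_B(\tau_i) \cup \{(a,b)\}$, Proposition~\ref{PropBAdja} yields the cover relation $\tau_i \lhd_R \tau_{i+1}$. Concatenating these covers gives $\sigma = \tau_k \lhd_R \tau_{k+1} \lhd_R \cdots \lhd_R \tau_q = \omega$, whence $\sigma \leq_R \omega$.

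There is no genuine obstacle left at this stage: all the substantive combinatorics has been done in Corollary~\ref{CorolPrincB} (identifying the elements of $IS(\B)$ as $B$-inversion sets) and in Proposition~\ref{PropBAdjEras} (matching erasable vertices with $B$-adjacencies). The argument above is a purely formal transport of the saturated chain produced by the peeling process through these two dictionaries.
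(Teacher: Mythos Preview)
Your proof is correct and follows essentially the same route as the paper's: obtain a single peeling sequence containing both $\Inv_B(\sigma)$ and $\Inv_B(\omega)$ as initial sections (via Proposition~\ref{PropGradedPoset}), then convert the resulting saturated chain in $IS(\B)$ into one in $(B_n,\leq_R)$ using Corollary~\ref{CorolPrincB} and Proposition~\ref{PropBAdja}. Your version is in fact more explicit than the paper's, since you spell out the role of Proposition~\ref{PropBAdjEras} in establishing $B$-adjacency at each step, which the paper's terse proof leaves implicit.
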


\begin{proof}
The direct direction is given by \eqref{EqInvBWeak}, and we now prove the converse. Assume that $\Inv_B(\sigma) \subseteq \Inv_B(\omega)$, then there exist $L=[z_1,\ldots,z_{n^2}]\in PS(\B)$ and $p\leq q$ two integers such that $\Inv(\sigma)=\{z_1,\ldots,z_p\}$ and $\Inv(\omega)=\{z_1,\ldots,z_q\}$. Then thanks to Corollary~\ref{CorolPrincB} and Proposition~\ref{PropBAdja}, there exist $\sigma_1,\ldots,\sigma_k$ such that $\sigma=\sigma_1 \lhd_R \sigma_2 \lhd_R \ldots \lhd_R \sigma_k=\omega$, and this ends the proof.
\end{proof}

Consequently, thanks to Corollary~\ref{CorolPrincB} the posets $(IS(\B),\subseteq)$ and $(B_n,\leq_R)$ are isomorphic.
This concludes the proof of Theorem~\ref{TheoTypB}.

\subsection{Weak order on $\widetilde{A}_n$}\label{SectAffineA}
Recall that $\widetilde{A}_n$ is the Coxeter Group with generating set $S=\{ s_1,\ldots,s_n\}$, and with Coxeter matrix given by $m_{s_i s_{j}}=3$ if $j=i+1$ (where the indices are taken modulo $n$), and $m_{s_i s_j}=2$ otherwise. This group can be seen as the group of the \emph{affine permutations}, that is, the group of all the bijections $\sigma:\mathbb{Z}\mapsto \mathbb{Z}$ such that:
\begin{enumerate}
\item for all $k$ and $q$ in $\mathbb{Z}$, $\sigma(q+kn)=\sigma(q)+kn$;
\item $\sigma(1)+\sigma(2)+\ldots + \sigma(n)=\frac{n(n+1)}{2}$.
\end{enumerate}
Thanks to this interpretation, we identify $s_i$ with the affine permutation swapping positions of the integers $i+kn$ and $i+1+kn$, for all $k \in \mathbb{Z}$.

We are going to follow the same method of the previous sections. In order to find a candidate of valued digraph, we consider a notion of $\widetilde{A}$-inversion set adapted to the case of $\widetilde{A_n}$ (see Definition~\ref{DefAffInv}). After we checked that this notion is effectively related to the weak order on $\widetilde{A_n}$ (see Property~\ref{EqInvAffineWeak}), we propose a graphical interpretation of $\widetilde{A}$-inversion sets using \emph{cylindrical diagrams}. Once again, this representation carries a natural notion of hooks, called \emph{cylindrical hooks}, which leads us to define a digraph structure on a cylindrical diagram as in Sections~\ref{SectSym} and \ref{SectB}. Then, we define a valued digraph using the resulting digraph, and we check that the obtained lattice is indeed isomorphic to $(\widetilde{A_n},\leq_R)$. 

\begin{Remark}
We point out that, as in Section~\ref{SectB}, the notion of $\widetilde{A}$-inversion set we use here does not come from a root system of $\widetilde{A_n}$. This choice gives the same benefits (a ``permutation point of view'' on the weak order) and disadvantages (we will have to relate $\widetilde{A}$-inversion sets to the weak order by ourselves) as in Section~\ref{SectB}. Fortunately, once again most of technical points have already been studied in \cite{BB}.
\end{Remark}

We associate to each affine permutation $\sigma$ an $\widetilde{A}$-inversion set, define as follows:
\begin{equation}\label{DefAffInv}
\Inv_{\widetilde{A}}(\sigma):=\{(a,b)\in [n]\times \mathbb{N}^* \ |  \ a<b, \ {\rm and} \ \sigma^{-1}(a)>\sigma^{-1}(b) \},
\end{equation}

\begin{definition}
(see~\cite[Eq. (8.30)]{BB}).
Let $\sigma \in \widetilde{A_n}$, the $\widetilde{A}$-inversion number of $\sigma$ is the quantity
\begin{equation*}
\inv_{\widetilde{A}}(\sigma):=|\{(a,b)\in[n]\times \mathbb{N}^*\ |\ a<b \ {\rm and} \ \sigma(a)>\sigma(b) \}|
\end{equation*}
\end{definition}

Note that we clearly have $\inv_{\widetilde{A}}(\sigma)=|\Inv_{\widetilde{A}}(\sigma)|$ for all $\sigma \in \widetilde{A_n}$.

\begin{lemma}[see \cite{BB}, Eq. (8.34) p. 262]\label{LemBrentBjornTypeAffine}
Let $\sigma \in \widetilde{A_n}$ and $i\in [n]$, we have
\begin{equation*}
\inv_{\widetilde{A}}(\sigma s_i)=\left\{
\begin{aligned}
\inv_{\widetilde{A}}(\sigma)+1, \ & \ \text{if} \ \sigma(i)<\sigma(i+1), \\
\inv_{\widetilde{A}}(\sigma)-1, \ & \ \text{if} \ \sigma(i)>\sigma(i+1).
\end{aligned}
\right.
\end{equation*}
\end{lemma}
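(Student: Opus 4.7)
The plan is to prove this by directly tracking how each pair $(a,b) \in [n] \times \mathbb{N}^*$ with $a<b$ contributes to $\inv_{\widetilde{A}}$ before and after right-multiplication by $s_i$. First, I would unpack the action: $\sigma s_i$ differs from $\sigma$ only at positions $j$ with $j \equiv i$ or $i+1 \pmod n$, and on each ``swap site'' $\{kn+i, kn+i+1\}$ with $k \in \mathbb{Z}$, the two values of $\sigma$ are exchanged. Using the periodicity $\sigma(j+n) = \sigma(j) + n$ inherent to affine permutations, this action has a clean periodic description that is straightforward to compare against on any fixed index.

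Next, I would partition the pairs $(a,b)$ contributing to $\inv_{\widetilde{A}}$ according to the residues of $a$ and $b$ modulo $n$ relative to $\{i,i+1\}$, and then group them into orbits under the natural involution sending each swap-site coordinate to its partner under $s_i$. For any pair with neither endpoint on a swap site, the inversion indicator is unchanged. For orbits of size two (one endpoint on a swap site) or of size four (both on swap sites) that remain entirely inside $[n]\times\mathbb{N}^*$ with $a<b$, the sum of the inversion indicators is manifestly preserved by $s_i$, since right-multiplication by $s_i$ just permutes the multiset of values being compared within the orbit.

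The only orbit that is not balanced in this way is the singleton $\{(i, i+1)\}$ for $i \in \{1,\ldots,n-1\}$ (respectively $\{(n,n+1)\}$ when $i=n$), whose ``partner'' under the pairing would require an $a$-coordinate outside $[n]$. On this exceptional pair, the inversion indicator flips: it equals $1$ precisely when $\sigma(i)>\sigma(i+1)$ beforehand, and precisely when $\sigma(i+1)>\sigma(i)$ afterwards. This single flip produces the claimed net change of $\pm 1$, with sign $+$ exactly when $\sigma(i)<\sigma(i+1)$ and sign $-$ otherwise.

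The main obstacle is the boundary case $i=n$: the affine generator $s_n$ acts at positions $kn$ and $kn+1$ for all $k\in\mathbb{Z}$, so within $[n]$ both positions $1$ and $n$ are affected, but the $s_n$-orbit of position $1$ contains the non-positive index $0$. The naive orbit pairing therefore breaks down near position $1$, and one has to repair it by invoking $\sigma(0)=\sigma(n)-n$ and $\sigma(n+1)=\sigma(1)+n$ to re-balance contributions of pairs of the form $(1,b)$ against contributions of pairs of the form $(n,b')$. Verifying that, after this re-pairing, every orbit cancels except the single unpaired pair $(n,n+1)$ is where the bookkeeping requires the most care.
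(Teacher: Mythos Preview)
The paper does not give a proof of this lemma at all; it merely cites it from Bj\"orner--Brenti (Eq.~(8.34)), as indicated in the lemma's heading. So there is no ``paper's own proof'' to compare against. Your direct orbit-pairing argument is the standard way this identity is established and is sound in outline: for $i\in\{1,\dots,n-1\}$ the pairing of $(i,b)$ with $(i+1,b)$, of $(a,kn+i)$ with $(a,kn+i+1)$, and the four-element orbits on double swap sites all balance, leaving only $(i,i+1)$ to flip.

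One point to be careful about when you write it up: in the $i=n$ case, your re-pairing of $(1,b)$ against $(n,b')$ via the shift $b'=b+n$ (using $\sigma(0)=\sigma(n)-n$ and $\sigma(n+1)=\sigma(1)+n$) works, but you should state explicitly which pairs $(1,b)$ and $(n,b')$ are matched and verify that this matching is a bijection on the set of pairs with $a\in\{1,n\}$, $b>a$, excluding $(n,n+1)$. The danger is double-counting or missing pairs when both coordinates lie on swap sites in this boundary case (e.g.\ pairs of the form $(1,kn)$, $(1,kn+1)$, $(n,kn)$, $(n,kn+1)$ for $k\ge 1$). Once that bijection is written down cleanly, the cancellation is immediate.
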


We now begin to prove that $\widetilde{A}$-inversion sets can be used to study $(\widetilde{A_n},\leq_R)$. That is, we will show that we have 
\begin{equation}\label{EqInvAffineWeak}
\text{for all} \ \sigma,\omega \in \widetilde{A_n}, \ \text{if} \ \sigma \leq_R \omega, \ \text{then} \ \Inv_{\widetilde{A}}(\sigma) \subseteq \Inv_{\widetilde{A}}(\omega).
\end{equation}

We start with defining the equivalent of the notion of adjacency in type $\widetilde{A}$.

\begin{definition}\label{DefAffInv}
Let $\sigma \in \widetilde{A_n}$ and $(a,b) \in [n] \times \mathbb{N}^*$. We say that $a$ and $b$ are \emph{$\widetilde{A}$-adjacent} in $\sigma$ if and only if $a<b$ and $\sigma^{-1}(a)=\sigma^{-1}(b)-1$.
\end{definition}

We are now able to state the lemma which connects $\widetilde{A}$-inversion sets to $(\widetilde{A_n},\leq_R)$.

\begin{lemma}\label{LemAffAdja-Cover}
Let $\sigma \in \widetilde{A_n}$ and $1 \leq j \leq n$. Then, there exists $(a,b)$ such that $1 \leq a \leq n$, $a<b$ and $\sigma s_j$ is obtained from $\sigma$ by swapping positions of the integers $a+kn$ and $b+kn$ for all $k \in \mathbb{Z}$, and we have two possibilities:
\begin{itemize}
\item if $a$ and $b$ are adjacent in $\sigma$, then $\ell(\sigma s_j)=\ell(\sigma)+1$ and $\Inv_{\widetilde{A}}(\sigma s_j)=\Inv_{\widetilde{A}}(\sigma) \cup \{(a,b)\}$;
\item if $a$ and $b$ are not adjacent in $\sigma$, then $\ell(\sigma s_j)= \ell(\sigma)-1$ and $\Inv_{\widetilde{A}}(\sigma s_j)=\Inv_{\widetilde{A}}(\sigma) \setminus \{(a,b)\}$.
\end{itemize}
\end{lemma}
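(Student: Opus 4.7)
The plan is to follow the same pattern as Proposition~\ref{propTechInvB} (the $B_n$ analogue), replacing Lemma~\ref{LemBrentBjornTypeB} with its $\widetilde{A}$-counterpart Lemma~\ref{LemBrentBjornTypeAffine}. The starting observation is the identity $(\sigma s_j)^{-1}=s_j\sigma^{-1}$ (since $s_j$ is an involution): so $(\sigma s_j)^{-1}(m)$ differs from $\sigma^{-1}(m)$ precisely when $\sigma^{-1}(m)\in\{j+kn,\ j+1+kn:k\in\mathbb{Z}\}$, i.e.\ precisely for $m\in\{\sigma(j)+kn,\ \sigma(j+1)+kn:k\in\mathbb{Z}\}$. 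First I would use this to pin down $a$ and $b$: set $u=\min(\sigma(j),\sigma(j+1))$ and $v=\max(\sigma(j),\sigma(j+1))$, choose the unique $k_0\in\mathbb{Z}$ with $a:=u-k_0 n\in[n]$, and define $b:=v-k_0 n$. Then $a<b$, $1\le a\le n$, and $\sigma s_j$ is indeed obtained from $\sigma$ by exchanging $a+kn$ and $b+kn$ for every $k$.

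Next I would compute the change in the inversion set. Because right multiplication by $s_j$ acts on the infinite window notation of $\sigma$ as the collection of \emph{adjacent} transpositions swapping $\sigma(j+kn)$ and $\sigma(j+1+kn)$, the only pairs of values whose relative order is altered are $\{a+kn,\ b+kn\}$ for $k\in\mathbb{Z}$. The definition of $\Inv_{\widetilde{A}}$ forces the smaller entry to lie in $[n]$, and among the pairs $\{a+kn,\ b+kn\}$ this selects exactly $k=0$, i.e.\ the pair $(a,b)$. Hence $\Inv_{\widetilde{A}}(\sigma s_j)\triangle\Inv_{\widetilde{A}}(\sigma)=\{(a,b)\}$.

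The direction of the change (and the $\widetilde{A}$-adjacency statement) is then settled by a case split on the sign of $\sigma(j+1)-\sigma(j)$. If $\sigma(j)<\sigma(j+1)$, then $u=\sigma(j)$, so $\sigma^{-1}(a)=j-k_0 n$ and $\sigma^{-1}(b)=j+1-k_0 n$; thus $(a,b)\notin\Inv_{\widetilde{A}}(\sigma)$ and $a,b$ are $\widetilde{A}$-adjacent in $\sigma$, and combining with the first case of Lemma~\ref{LemBrentBjornTypeAffine} gives $\Inv_{\widetilde{A}}(\sigma s_j)=\Inv_{\widetilde{A}}(\sigma)\cup\{(a,b)\}$ and $\ell(\sigma s_j)=\ell(\sigma)+1$. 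If $\sigma(j)>\sigma(j+1)$, then the roles of $u$ and $v$ are swapped, hence $\sigma^{-1}(a)=j+1-k_0 n>\sigma^{-1}(b)=j-k_0 n$; so $(a,b)\in\Inv_{\widetilde{A}}(\sigma)$, the pair is not $\widetilde{A}$-adjacent in $\sigma$, and the second case of Lemma~\ref{LemBrentBjornTypeAffine} closes the second bullet.

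The main obstacle I expect is the bookkeeping in the second step: one must verify carefully that the restriction ``first coordinate in $[n]$'' in the definition of $\Inv_{\widetilde{A}}$ is exactly what prunes the infinitely many flipped pairs down to the single element $(a,b)$, with no shifted copy $\{a+kn,b+kn\}$ for $k\neq 0$ sneaking into the symmetric difference. This pruning is precisely what makes the inversion count change by $\pm 1$ in agreement with Lemma~\ref{LemBrentBjornTypeAffine}, and confirms that the choice of $a$ and $b$ made at the outset is the correct normalisation.
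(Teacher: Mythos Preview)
Your proposal is correct and follows the same route as the paper, which simply says the lemma ``is an immediate translation of the results in Lemma~\ref{LemBrentBjornTypeAffine} in terms of $\widetilde{A}$-adjacency''; you have just spelled out that translation in full (the normalisation of $(a,b)$, the computation of the symmetric difference of inversion sets, and the case split on the sign of $\sigma(j+1)-\sigma(j)$). One small point: Lemma~\ref{LemBrentBjornTypeAffine} is stated for $\inv_{\widetilde{A}}$ rather than $\ell$, so to conclude $\ell(\sigma s_j)=\ell(\sigma)\pm 1$ you are implicitly using the standard identity $\ell(\sigma)=\inv_{\widetilde{A}}(\sigma)$ from \cite{BB}, which the paper takes for granted as well.
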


\begin{proof}
This is an immediate translation of the results in Lemma~\ref{LemBrentBjornTypeAffine} in terms of $\widetilde{A}$-adjacency.
\end{proof}

An immediate consequence of Lemma~\ref{LemAffAdja-Cover} is that \eqref{EqInvAffineWeak} holds. 

\begin{Remark}
As in Section~\ref{SectB}, note that the converse implication holds, and it is also a by-product of the following results.
\end{Remark}

We now introduce a convenient way to represent $\widetilde{A}$-inversion sets. First, note that for all $\sigma \in \widetilde{A_n}$ and for all $a,b \in \mathbb{Z}$ such that $1 \leq a \leq n$ and $b\equiv a \pmod n$, since $\sigma$ is an affine permutation we have $(a,b) \notin \Inv_{\widetilde{A}}(\sigma)$.
Thus, each $\widetilde{A}$-inversion is a subset of 
\[ \{(a,b)\in \mathbb{N}^2 \ | \ 1\leq a \leq n, \ b \not \equiv a \pmod n, \ a<b  \}.\] 
This set can be represented by a diagram, which we denote by $\lambda_n^{cyl}$, as depicted in Figure~\ref{FigAffineType1}.
\begin{figure}[!h]
\includegraphics[width=0.65\textwidth]{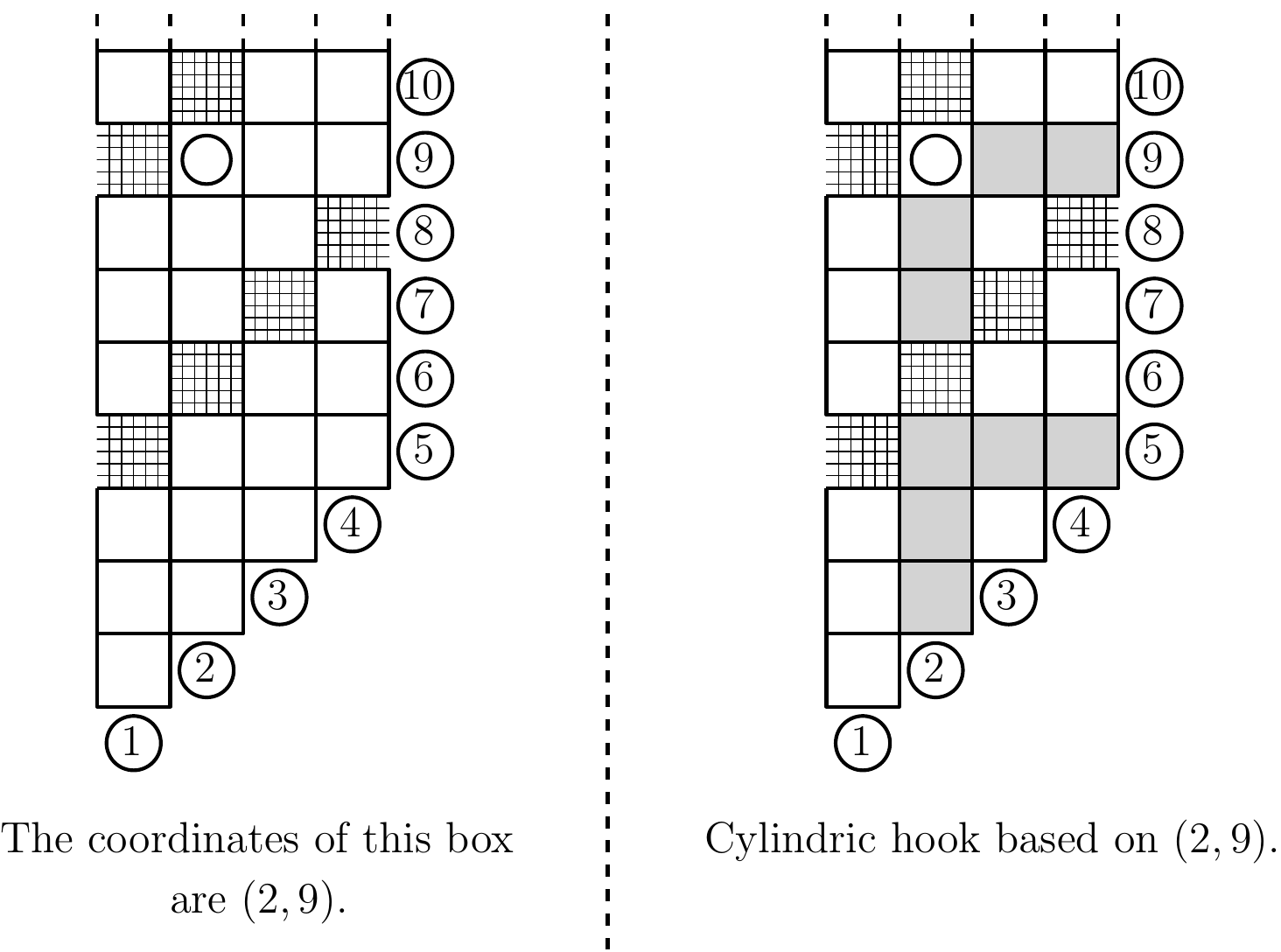}
\caption{Diagram $\lambda_4^{cyl}$}\label{FigAffineType1}
\end{figure}

From now on, we identify $\lambda_n^{cyl}$ with the set $\{(a,b)\in \mathbb{N}^2 \ | \ 1\leq a \leq n, \ b \not \equiv a \pmod n, \ a<b  \}.$
This diagram $\lambda_n^{cyl}$ can be thought as an infinite version of the diagram associated with the symmetric group rolled around a cylinder. With this point of view, $\lambda_n^{cyl}$ naturally carries a notion of hooks, which we call  \emph{cylindrical hooks}, as depicted on the right of Figure~\ref{FigAffineType1}. More formally, for all $(a,b) \in \lambda_n^{cyl}$, the cylindrical hook based on $(a,b)$ is the subset $H^{cyl}(a,b)$ of $\lambda_n^{cyl}$ defined by: 
\begin{equation*}
H^{cyl}(a,b):= \ \{ (a,k)\in \lambda_n^{cyl} \ | \ a<k<b \} \ 
\cup \ \left( \bigcup_{\substack{q \equiv b\! \! \pmod n \\q\leq b }}\{(k,q)\in \lambda_n^{cyl} \ | \ a<k<b \} \right)
\end{equation*}

Consequently, we can define a digraph structure $G$ on $\lambda_n^{cyl}$ using cylindrical hooks. That is, for all $\mathfrak{c},\mathfrak{d} \in \lambda_n^{cyl}$, there is an arc from $\mathfrak{c}$ to $\mathfrak{d}$ in $G$ if and only if $\mathfrak{c}\neq \mathfrak{d}$ and $\mathfrak{d}\in H^{cyl}(\mathfrak{c})$. 
Notice that the out-degree of a box of $\lambda_n^{cyl}$ is generally not an even number, so that we cannot define the valuation as in the previous sections. Nevertheless, after some tests it appears that the valuation $\theta$ defined for all $(a,b)\in \lambda_n^{cyl}$ by
\[ \theta(a,b):=|\{(a,k) \in \lambda_n^{cyl} \ | \ a<k<b \}|,\]
which is just the number of boxes which are below $(a,b)$ in the graphical representation of $\lambda_n^{cyl}$, seems to lead to the expected description of the weak order on $\widetilde{A_n}$. Before moving to the proof that is is indeed the case, let us summarize this construction in a definition.

\begin{definition}
Let $G=(V,E)$ be the digraph defined by 
\[V:=\lambda_n^{cyl} \ {\rm and} \ E:=\{(\mathfrak{c},\mathfrak{d})\in (\lambda_n^{cyl})^2 \ | \ \mathfrak{c}\neq \mathfrak{d} \ {\rm and} \ \mathfrak{d}\in H^{cyl}(\mathfrak{c}) \}.\]
We denote by $\widetilde{\A}=(G,\theta)$ the valued digraph such that for all $(a,b) \in \lambda_n^{cyl}$,
\[ \theta(a,b)=|\{(a,k) \in \lambda_n^{cyl} \ | \ a<k<b \}|.\]
\end{definition}

Our aim is now to prove that $(IS(\widetilde{\A}),\subseteq)$ is isomorphic to $(\widetilde{A_n},\leq_R)$. This can be done following exactly the same method as in Section~\ref{SectionTypeBProof}, and we refer the reader to the introduction of Section~\ref{SectionTypeBProof} for the detail of the different steps.

We first define the statistic on the affine permutations, which will lead us to the combinatorial interpretation of the valuations appearing when one perform the peeling process on $\widetilde{\A}$.

\begin{definition}
Let $\sigma \in \widetilde{A_n}$ and $(a,b) \in \lambda_n^{cyl}\setminus \Inv_{\widetilde{A}}(\sigma)$. We set
\begin{align*}
d_{\sigma}(a,b)&:=|\{a<k<b \ | \ k \not\equiv a \!\! \pmod n, \ \sigma^{-1}(a) \leq \sigma^{-1}(k) \leq \sigma^{-1}(b) \}|.
\end{align*}
\end{definition}

We then have the following alternative definition of the statistic $d_{\sigma}$

\begin{lemma}\label{LemTechAffin}
For all $\sigma \in \widetilde{A_n}$ and $(a,b) \in \lambda_n^{cyl} \setminus \Inv_{\widetilde{A}}(\sigma)$, we have 
\[d_{\sigma}(a,b)=d_{Id}(a,b)-|H^{cyl}(a,b)\cap \Inv_{\widetilde{A}}(\sigma)|.\]
\end{lemma}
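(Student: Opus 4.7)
The plan is to mimic the inductive proofs of Lemma~\ref{LemSymTech} and Lemma~\ref{LemAlterDefStatB}. Fix $(a,b)\in\lambda_n^{cyl}$ with $(a,b)\notin\Inv_{\widetilde{A}}(\sigma)$, pick a reduced decomposition $s_{i_1}\cdots s_{i_q}$ of $\sigma$, set $\sigma_0=Id$ and $\sigma_j=s_{i_1}\cdots s_{i_j}$, and prove the identity by induction on $j$. Since the word is reduced, Lemma~\ref{LemAffAdja-Cover} shows that the $\widetilde{A}$-inversion set only grows from $\sigma_j$ to $\sigma_{j+1}$, so $(a,b)\notin\Inv_{\widetilde{A}}(\sigma_j)$ for every $j\leq q$. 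The base case $j=0$ is immediate because $\Inv_{\widetilde{A}}(Id)=\emptyset$ and both sides of the claimed identity reduce to $d_{Id}(a,b)$.

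For the inductive step, Lemma~\ref{LemAffAdja-Cover} supplies a unique pair $(a',b')\in\Inv_{\widetilde{A}}(\sigma_{j+1})\setminus\Inv_{\widetilde{A}}(\sigma_j)$, with $a'$ and $b'$ $\widetilde{A}$-adjacent in $\sigma_j$; moreover right multiplication by $s_{i_{j+1}}$ moves $\sigma^{-1}$ of every translate of $a'$ by $+1$ and of every translate of $b'$ by $-1$. The increment $|H^{cyl}(a,b)\cap\Inv_{\widetilde{A}}(\sigma_{j+1})|-|H^{cyl}(a,b)\cap\Inv_{\widetilde{A}}(\sigma_j)|$ equals $1$ when $(a',b')\in H^{cyl}(a,b)$ and $0$ otherwise, so the task reduces to showing that the corresponding difference $d_{\sigma_j}(a,b)-d_{\sigma_{j+1}}(a,b)$ obeys the same dichotomy.

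The argument splits naturally along the two components of $H^{cyl}(a,b)$. When $(a',b')=(a,k)$ with $a<k<b$, the $\widetilde{A}$-adjacency of $a$ and $k$ in $\sigma_j$ together with $(a,b)\notin\Inv_{\widetilde{A}}(\sigma_j)$ force the local pattern $[\,\ldots,a,k,\ldots,b,\ldots\,]$ in the (bi-infinite) window of $\sigma_j$; after the swap $k$ ends up just to the left of the former position of $a$, hence leaves the set counted by $d$ and forces $d$ to drop by exactly one, exactly as in the type $A$ proof. When $(a',b')=(k,q)$ with $a<k<b$ and $q\equiv b\pmod n$, $q\leq b$, a parallel analysis (exchanging rows and columns roles) shows that one element is lost from the counted set. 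In every remaining case, for each $k$ with $a<k<b$ and $k\not\equiv a\pmod n$, the truth of $\sigma^{-1}(a)\leq\sigma^{-1}(k)\leq\sigma^{-1}(b)$ is preserved, so $d$ is unchanged; the induction then closes.

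The main obstacle, relative to the finite rank cases, is that a single right multiplication by a simple reflection simultaneously shifts the positions of infinitely many translates of $a'$ and $b'$. In particular, if $q=b-\ell n$ with $\ell\geq 1$, the swap shifts $\sigma^{-1}(b)$ itself by $-1$, so one cannot simply claim that only $\sigma^{-1}(k)$ changes. The delicate point is to exploit the compensating shifts (every translate of $a'=k$ moves by $+1$, every translate of $b'$ by $-1$), together with the constraint $a\in\{1,\ldots,n\}$ from the definition of $\lambda_n^{cyl}$, in order to verify that exactly one integer strictly between $a$ and $b$ crosses the position of $a$ or of $b$ under the swap, yielding the correct decrement in $d$. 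Once this bookkeeping is carried out, all other translates of $a'$ and $b'$ are seen to lie outside the interval $(a,b)$ or outside the window between $\sigma^{-1}(a)$ and $\sigma^{-1}(b)$, and so contribute nothing to the count.
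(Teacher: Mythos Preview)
Your proposal is correct and follows essentially the same approach as the paper, which merely writes ``The proof is similar as the one of Lemma~\ref{LemSymTech}'' without further detail. You actually spell out the induction along a reduced word and isolate the affine-specific subtlety (that a single simple reflection moves infinitely many translates and may shift $\sigma^{-1}(b)$ itself when $q<b$), which is more than the paper does; the final bookkeeping you describe is exactly what is needed to close the argument.
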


\begin{proof}
The proof is similar as the one of Lemma~\ref{LemSymTech}.
\end{proof}

Thanks to Lemma~\ref{LemTechAffin}, we have the following proposition.

\begin{proposition}
Let $A \in IS(\widetilde{A})$, if there exists $\sigma \in \widetilde{A_n}$ such that $A=\Inv_{\widetilde{A}}(\sigma)$, then for all $(a,b) \in \lambda_n^{cyl}\setminus A$ we have $\theta_A(a,b)=d_{\sigma}(a,b)$.
\end{proposition}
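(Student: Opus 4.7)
The proposition follows the exact same template as Proposition~\ref{PropSymCombinatInt} and Proposition~\ref{TypeBPropTech}, so the plan is to mimic those proofs. My approach is to unwind the definitions: the peeling process decreases the value $\theta(a,b)$ by exactly the number of arcs from $(a,b)$ into $A$, and I would argue that this coincides with $|H^{cyl}(a,b)\cap A|$, then invoke Lemma~\ref{LemTechAffin}.

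More precisely, first I would observe that by the definition of the underlying digraph of $\widetilde{\A}$, the set of vertices $\mathfrak{d}\neq (a,b)$ such that there is an arc from $(a,b)$ to $\mathfrak{d}$ is exactly $H^{cyl}(a,b)\setminus\{(a,b)\}$. Since $(a,b)\notin A$ by assumption, the definition of the peeling process (together with the fact that the sequence of valuations $\theta_i$ produced by peeling any enumeration of $A$ depends only on the set $A$, as follows from its recursive description) yields
\[
\theta_A(a,b)=\theta(a,b)-|H^{cyl}(a,b)\cap A|.
\]
Second, I would check that $\theta(a,b)=d_{Id}(a,b)$. This is the only place where one has to be slightly careful: by definition $\theta(a,b)=|\{(a,k)\in\lambda_n^{cyl}\mid a<k<b\}|$, which counts the integers $k$ with $a<k<b$ and $k\not\equiv a\pmod n$; and $d_{Id}(a,b)=|\{a<k<b\mid k\not\equiv a\pmod n,\ a\leq k\leq b\}|$, where the inequality $a\leq k\leq b$ is automatically satisfied. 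The two counts coincide.

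Finally, combining the two displays with Lemma~\ref{LemTechAffin}, which asserts that
\[
d_{\sigma}(a,b)=d_{Id}(a,b)-|H^{cyl}(a,b)\cap \Inv_{\widetilde{A}}(\sigma)|=\theta(a,b)-|H^{cyl}(a,b)\cap A|,
\]
one gets $\theta_A(a,b)=d_{\sigma}(a,b)$, which is the desired identity.

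There is no real obstacle here: the substantial combinatorial content has been packaged into Lemma~\ref{LemTechAffin}, and the statement becomes a bookkeeping verification once the digraph structure and the valuation $\theta$ are unfolded. The only point that deserves any care is the identification $\theta(a,b)=d_{Id}(a,b)$, which could be tempting to write as $b-a-1$ but is actually $b-a-1$ minus the number of multiples of $n$ congruent to $a$ in the open interval $(a,b)$; matching conventions between the cylindrical diagram and the definition of $d_\sigma$ is the main thing to get right.
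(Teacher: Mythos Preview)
Your proof is correct and follows exactly the template the paper intends: the paper's own proof simply reads ``similar to the one of Proposition~\ref{PropSymCombinatInt},'' and your write-up is precisely that argument spelled out in the affine setting, unwinding the digraph/peeling definitions and invoking Lemma~\ref{LemTechAffin}. One cosmetic remark: in this paper $H^{cyl}(a,b)$ is defined so as not to contain $(a,b)$ itself (unlike $H(\mathfrak{c})$ in type $A$ and $\widetilde{H}(a,b)$ in type $B$), so your ``$\setminus\{(a,b)\}$'' is harmless but redundant.
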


\begin{proof}
The proof is similar as the one of Proposition~\ref{PropSymCombinatInt}.
\end{proof}

We can now state and prove the main proposition of this section.

\begin{proposition}
Let $A \in IS(\widetilde{A})$, if there exists $\sigma \in \widetilde{A_n}$ such that $A=\Inv_{\widetilde{A}}(\sigma)$, then $(a,b)$ is erasable in $\widetilde{A}_A$ if and only if $a$ and $b$ are $\widetilde{A}$-adjacent in $\sigma$.
\end{proposition}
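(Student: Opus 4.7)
My plan is to adapt the proofs of Proposition~\ref{TechSym} (type $A$) and Proposition~\ref{PropBAdjEras} (type $B$): by the preceding proposition, $\theta_A(a,b)=d_\sigma(a,b)$, so the whole argument can be carried out on the window notation of $\sigma$.

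For the forward direction, assume $a$ and $b$ are $\widetilde{A}$-adjacent, so that $\sigma^{-1}(b)=\sigma^{-1}(a)+1$ and hence $d_\sigma(a,b)=0$. For each arc $(c,e)\to(a,b)$ with $(c,e)\in\lambda_n^{cyl}\setminus A$ I must exhibit a witness $k$ for $d_\sigma(c,e)\geq 1$. The cylindrical hook definition yields two types of incoming arcs. Type~1, where $(c,e)=(a,e)$ with $e>b$: the witness $k=b$ works, since $(a,e)\notin A$ forces $\sigma^{-1}(a)<\sigma^{-1}(e)$ and adjacency then gives $\sigma^{-1}(b)=\sigma^{-1}(a)+1\leq\sigma^{-1}(e)$. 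Type~2, where $c<a$, $c\in[n]$, and $e=b+mn$ with $m\geq 0$: I plan to take $k=a+j_0 n$, with $j_0$ the smallest non-negative integer such that $\sigma^{-1}(a)+j_0 n\geq\sigma^{-1}(c)$. The affine translation identity $\sigma^{-1}(x+jn)=\sigma^{-1}(x)+jn$ puts $k$ in the required position range; the bound $\sigma^{-1}(c)\leq\sigma^{-1}(e)-1=\sigma^{-1}(a)+mn$ coming from $(c,e)\notin A$ forces $j_0\leq m$, so that $c<k<e$; and $k\not\equiv c\pmod{n}$ is automatic because $a$ and $c$ are distinct elements of $[n]$.

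For the backward direction, assume $(a,b)$ is erasable in $\widetilde{\A}_A$ and suppose for contradiction that $a$ and $b$ are not $\widetilde{A}$-adjacent. Then the window positions $\sigma^{-1}(a)+1,\ldots,\sigma^{-1}(b)-1$ contain at least one entry, and $d_\sigma(a,b)=0$ forces every such entry $k$ with $k\not\equiv a\pmod{n}$ to satisfy $k<a$ or $k>b$. I split into two cases following the symmetric proof. If some such entry $k_1>b$ exists, choose it minimal in value: then $(a,k_1)\in\lambda_n^{cyl}\setminus A$ has a type~1 arc to $(a,b)$, and the minimality of $k_1$ forces $d_\sigma(a,k_1)=0$, contradicting the erasability of $(a,b)$. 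Otherwise every such entry is $<a$; I take the maximal one $d$, pass to its representative $d^{*}\in[n]$ modulo $n$, and pick $b^{*}\equiv b\pmod{n}$ with $b^{*}\geq b$ large enough that $\sigma^{-1}(d^{*})<\sigma^{-1}(b^{*})$, producing a type~2 arc $(d^{*},b^{*})\to(a,b)$ on which $d_\sigma(d^{*},b^{*})=0$, again a contradiction.

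The main obstacle is the residue bookkeeping in the second case of the backward direction. Unlike the type $A$ and $B$ settings, where the maximal $d<a$ directly yields a legitimate box $(d,b)$, here the representative $d^{*}$ may end up in $[a,n]$ (killing the type~2 condition $c<a$) or in the residue class of $b$ modulo $n$ (making $(d^{*},b^{*})$ forbidden in $\lambda_n^{cyl}$). The remedy will be to replace $d$ by a better relevant entry in the window interval, whose existence in each block of $n$ consecutive positions is guaranteed by the affine translation principle (each residue modulo $n$ appears exactly once per such block), and to choose $b^{*}$ in the column of $b$ so that the position of $d^{*}$ lies in the right sub-interval. Once these residue collisions are navigated, the maximality argument transfers essentially verbatim from the symmetric case, and the proof concludes.
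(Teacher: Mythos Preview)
Your approach matches the paper's, which gives no details beyond ``the proof is similar to that of Proposition~\ref{TechSym}''. Your forward direction is fully worked out: the witness $k=b$ for type~1 arcs and the translated witness $k=a+j_0n$ for type~2 arcs are both correct, and your bound $j_0\le m$ is exactly what is needed.

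The backward direction is where the affine case genuinely diverges from types $A$ and $B$, and your proposal does not close the gap you yourself flag. Case~1 (some entry $k_1>b$ with $k_1\not\equiv a$) is fine: minimality of $k_1$ forces $d_\sigma(a,k_1)=0$ just as in the finite case. But Case~2 is only a sketch. You take the maximal entry $d<a$, reduce to $d^{*}\in[n]$, and hope to produce a type-2 arc $(d^{*},b^{*})\to(a,b)$; you then acknowledge that $d^{*}$ may fail $d^{*}<a$ or may be $\equiv b\pmod n$, and propose to ``replace $d$ by a better relevant entry'' without actually doing so. More seriously, under your own description of the incoming arcs, when $a$ is small (in particular $a=1$) there are \emph{no} type-2 arcs into $(a,b)$ at all, so no residue adjustment can salvage the argument; you would have to exhibit a type-1 arc $(a,e)$ with $e>b$ and $d_\sigma(a,e)=0$, and nothing in your outline explains how to locate such an $e$ when every window entry $\not\equiv a$ is below $a$. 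The affine translation principle you invoke does say each residue appears once per block of $n$ consecutive positions, but you have not shown how to convert this into a concrete box $(c,e)\notin A$ with an arc to $(a,b)$ and $d_\sigma(c,e)=0$.

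In short: same route as the paper, forward direction complete, backward direction unfinished. The residue bookkeeping in Case~2 is a real obstacle that your outline names but does not resolve.
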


\begin{proof}
Once again, the proof is similar to that of Proposition~\ref{TechSym}.
\end{proof}

Eventually, we have the following three corollaries that conclude this section.

\begin{corollary}
$IS(\widetilde{A})=\{\Inv_{\widetilde{A}}(\sigma) \ | \ \sigma \in \widetilde{A_n} \}$.
\end{corollary}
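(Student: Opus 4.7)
The plan is to mirror, almost verbatim, the argument of Corollary~\ref{CorolFinalA}, with the preceding proposition playing the role of Proposition~\ref{TechSym} and Lemma~\ref{LemAffAdja-Cover} playing the role of the cover-characterization used in the symmetric case. The proof naturally splits into two inclusions.

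For the forward direction $IS(\widetilde{A})\subseteq\{\Inv_{\widetilde{A}}(\sigma)\mid \sigma\in\widetilde{A_n}\}$, I fix any peeling sequence $L=[(a_1,b_1),(a_2,b_2),\ldots]\in PS(\widetilde{\A})$, set $A_k=\{(a_1,b_1),\ldots,(a_k,b_k)\}$, and prove by induction on $k$ that there exists $\sigma_k\in\widetilde{A_n}$ with $\Inv_{\widetilde{A}}(\sigma_k)=A_k$. The base case $k=0$ is trivial: take $\sigma_0=Id$. For the inductive step, given $\sigma_k$ with $\Inv_{\widetilde{A}}(\sigma_k)=A_k$, the preceding proposition (applied with $A=A_k$) implies that $a_{k+1}$ and $b_{k+1}$ are $\widetilde{A}$-adjacent in $\sigma_k$ because $(a_{k+1},b_{k+1})$ is erasable in $\widetilde{\A}_{A_k}$. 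Choosing the (unique) generator $s_j$ whose action swaps the positions of $a_{k+1}+rn$ and $b_{k+1}+rn$ for all $r\in\mathbb{Z}$, Lemma~\ref{LemAffAdja-Cover} then produces $\sigma_{k+1}:=\sigma_k s_j$ with $\Inv_{\widetilde{A}}(\sigma_{k+1})=A_k\cup\{(a_{k+1},b_{k+1})\}=A_{k+1}$.

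For the reverse inclusion, I take any $\sigma\in\widetilde{A_n}$ with a reduced decomposition $\sigma=s_{i_1}\cdots s_{i_q}$, set $\sigma_j=s_{i_1}\cdots s_{i_j}$ and induct on $j$ to show that $\Inv_{\widetilde{A}}(\sigma_j)\in IS(\widetilde{A})$, simultaneously building a peeling sequence whose first $j$ entries enumerate $\Inv_{\widetilde{A}}(\sigma_j)$. The base $j=0$ is $\Inv_{\widetilde{A}}(Id)=\emptyset$. Inductively, Lemma~\ref{LemAffAdja-Cover} supplies the unique pair $(a,b)$ with $\Inv_{\widetilde{A}}(\sigma_{j+1})=\Inv_{\widetilde{A}}(\sigma_j)\cup\{(a,b)\}$ and $a,b$ $\widetilde{A}$-adjacent in $\sigma_j$; the preceding proposition then certifies that $(a,b)$ is erasable in $\widetilde{\A}_{\Inv_{\widetilde{A}}(\sigma_j)}$, so it is legitimate to extend the peeling sequence at stage $j$ by $(a,b)$, exhibiting $\Inv_{\widetilde{A}}(\sigma_{j+1})$ as an initial section.

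The only delicate bookkeeping is the translation between $\widetilde{A}$-adjacent value-pairs $(a,b)\in\lambda_n^{cyl}$ and the affine generator $s_j$ that swaps them, since $\lambda_n^{cyl}$ is indexed by value labels whereas the generators act on positions; this correspondence is nevertheless routine, as Lemma~\ref{LemAffAdja-Cover} guarantees that each cover in $\leq_R$ adds or removes exactly one element of $\lambda_n^{cyl}$ from the $\widetilde{A}$-inversion set. No new technical ingredients are needed beyond those already established in the type $A$ case.
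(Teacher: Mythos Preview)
Your proposal is correct and follows the same approach as the paper, which simply refers back to Corollary~\ref{CorolFinalA} and declares the argument analogous. In fact, you are more thorough than the paper: you explicitly separate the two inclusions and spell out the reverse direction via a reduced decomposition, whereas the paper's model proof (for type~$A$) only writes out the forward direction and leaves the other implicit.
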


\begin{corollary}
Let $\sigma,\omega \in \widetilde{A_n}$. Then,
$\sigma \leq_R \omega, \ \text{if and only if} \ \Inv_{\widetilde{A}}(\sigma) \subseteq \Inv_{\widetilde{A}}(\omega)$.

\end{corollary}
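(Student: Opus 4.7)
The plan is to mimic exactly the proof of Corollary~\ref{CorolEquivInvWeakB} in the type $B$ case, since all the ingredients required are already in place: the forward implication is Property~\eqref{EqInvAffineWeak}, the characterization of $IS(\widetilde{\A})$ as the family of $\widetilde{A}$-inversion sets, the erasable/adjacency correspondence just established, and the covering description provided by Lemma~\ref{LemAffAdja-Cover}. Only the reverse implication requires work.

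For the converse, I would begin by assuming $\Inv_{\widetilde{A}}(\sigma) \subseteq \Inv_{\widetilde{A}}(\omega)$. By the previous corollary both sets lie in $IS(\widetilde{\A})$, so I can invoke Proposition~\ref{PropGradedPoset} (which is a general statement about $(IS(\G),\subseteq)$ for any valued digraph $\G$, in particular applicable here even though $\widetilde{\A}$ is infinite) to produce a peeling sequence $L=[z_1,z_2,\ldots] \in PS(\widetilde{\A})$ together with integers $p \leq q$ such that $\Inv_{\widetilde{A}}(\sigma) = \{z_1,\ldots,z_p\}$ and $\Inv_{\widetilde{A}}(\omega) = \{z_1,\ldots,z_q\}$.

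Then I would use the characterization of $IS(\widetilde{\A})$ to obtain a sequence of affine permutations $\sigma_p = \sigma, \sigma_{p+1}, \ldots, \sigma_q = \omega$ with $\Inv_{\widetilde{A}}(\sigma_j) = \{z_1,\ldots,z_j\}$ for each $p \leq j \leq q$. Each transition $\sigma_j \to \sigma_{j+1}$ corresponds to peeling the vertex $z_{j+1}$, which is erasable in $\widetilde{\A}_{\{z_1,\ldots,z_j\}}$; by the preceding proposition, $z_{j+1} = (a,b)$ with $a$ and $b$ being $\widetilde{A}$-adjacent in $\sigma_j$. Lemma~\ref{LemAffAdja-Cover} then guarantees that $\sigma_{j+1}$ is obtained from $\sigma_j$ by right multiplication with a simple generator increasing the length by one, that is, $\sigma_j \lhd_R \sigma_{j+1}$. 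Chaining these cover relations yields $\sigma \leq_R \omega$.

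There is essentially no obstacle: the proof is a transcription of the type $B$ argument, and the only point worth double-checking is that Proposition~\ref{PropGradedPoset} applies verbatim even though the underlying digraph of $\widetilde{\A}$ has infinitely many vertices -- but its statement is already formulated for arbitrary valued digraphs, and the two initial sections $\Inv_{\widetilde{A}}(\sigma)$ and $\Inv_{\widetilde{A}}(\omega)$ are finite, so no extra care is required.
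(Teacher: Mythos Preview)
Your proposal is correct and follows exactly the approach the paper intends: the corollary is stated without proof precisely because it is a verbatim transcription of the argument for Corollary~\ref{CorolEquivInvWeakB}, using the forward implication~\eqref{EqInvAffineWeak}, Proposition~\ref{PropGradedPoset}, the preceding corollary identifying $IS(\widetilde{\A})$ with the $\widetilde{A}$-inversion sets, the erasable/adjacency proposition, and Lemma~\ref{LemAffAdja-Cover}. Your remark that Proposition~\ref{PropGradedPoset} is stated for arbitrary valued digraphs (hence applies despite $\widetilde{\A}$ being infinite) is a legitimate sanity check and resolves the only conceivable worry.
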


\begin{corollary}\label{TheoAffine}
The two posets $(IS(\widetilde{\A}),\subseteq)$ and $(\widetilde{A_n},\leq_R)$ are isomorphic.
\end{corollary}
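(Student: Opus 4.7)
The plan is to exhibit the map $\phi : \widetilde{A_n} \to IS(\widetilde{\A})$ defined by $\phi(\sigma) = \Inv_{\widetilde{A}}(\sigma)$ and verify that it is a poset isomorphism. Essentially all of the work has already been done in the two preceding corollaries, so what remains is assembly.

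First I would observe that $\phi$ takes values in $IS(\widetilde{\A})$ (this is implicit from the first of the preceding corollaries, but can also be read off from the fact that the peeling process on $\widetilde{\A}$ produces exactly the sets $\Inv_{\widetilde{A}}(\sigma)$ via the adjacency/erasability correspondence established in this section). Surjectivity of $\phi$ is then the content of the first corollary, namely $IS(\widetilde{A})=\{\Inv_{\widetilde{A}}(\sigma) \mid \sigma \in \widetilde{A_n}\}$.

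Next I would verify that $\phi$ is order-preserving in both directions, which is precisely the statement of the second corollary: $\sigma \leq_R \omega$ if and only if $\Inv_{\widetilde{A}}(\sigma) \subseteq \Inv_{\widetilde{A}}(\omega)$. Injectivity of $\phi$ then follows as a formal consequence: if $\phi(\sigma) = \phi(\omega)$, then both $\Inv_{\widetilde{A}}(\sigma) \subseteq \Inv_{\widetilde{A}}(\omega)$ and $\Inv_{\widetilde{A}}(\omega) \subseteq \Inv_{\widetilde{A}}(\sigma)$ hold, so $\sigma \leq_R \omega$ and $\omega \leq_R \sigma$, whence $\sigma = \omega$ by antisymmetry of the weak order.

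Since $\phi$ is a bijection which preserves and reflects the order relations, it is a poset isomorphism between $(\widetilde{A_n},\leq_R)$ and $(IS(\widetilde{\A}),\subseteq)$. There is no genuine obstacle here; the real work lay in the combinatorial analysis culminating in the erasability/$\widetilde{A}$-adjacency correspondence, and in the technical lemma relating $d_\sigma$ to $\theta_A$. Once those are in hand, this corollary is a one-line synthesis.
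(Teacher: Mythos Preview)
Your proposal is correct and is essentially the same approach as the paper's: the corollary is stated there without proof, as an immediate consequence of the two preceding corollaries, and your write-up simply makes explicit the straightforward assembly of those results into the claimed isomorphism via $\sigma \mapsto \Inv_{\widetilde{A}}(\sigma)$.
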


\subsection{Flag Weak Order on $\mathbb{Z} _r \wr S_n$}\label{SectionFlagWeak}

In this section, we consider an order on $G(r,n):=\mathbb{Z} _r \wr S_n$ (introduced by Adin, Brenti and Roichman in \cite{ABR}), called the \emph{flag weak order}, that generalizes the weak order on the symmetric group. In order to define this new poset, let us first introduce some notations and definitions.
We denote by $\mathbb{Z}_r$ the (additive) cyclic group of order $r$ and by $G(r,n)$ the group
 $$G(r,n):=\{((c_1,\ldots,c_n),\sigma) \ | \ c_i \in [r], \ \sigma \in S_n  \}$$ 
with the group operation given by $$((c_1,\ldots,c_n),\sigma).((d_1,\ldots,d_n),\omega)=((c_{\omega(1)}+d_1,\ldots,c_{\omega(n)}+d_n),\sigma \omega),$$ 
where the sums $c_{\omega(i)}+d_i$ are taken modulo $r$.
This group is usually called the group of \emph{$r$-colored permutations}, i.e. bijections $g$ of the set $\mathbb{Z}_r \times \{1,\ldots,n \}$ onto itself such that: $$g(c,i)=(d,j) \Longrightarrow g(c+c',i)=(d+c',j).$$
Note that the group $G(r,n)$ can also be viewed as a \emph{complex reflection group}. However, once again it is the ``permutation point of view" on $G(r,n)$ which will allow us to apply our theory here. 

Before moving to the definition of the flag weak order, we introduce some useful notations taken from \cite{ABR}.

\begin{definition}\label{DefFlagStat}
Let $\pi=((c_1,\ldots,c_n),\sigma)$ be in $G(r,n)$, we define:
\begin{enumerate}
\item $|\pi|=\sigma$;
\item ${\rm Inv}(\pi)={\rm Inv}(|\pi|)$;
\item $n(\pi)=\sum_{i} c_i$;
\item ${\rm finv}(\pi)=r.|{\rm Inv}(\pi)|+n(\pi)$, called the \emph{flag inversion number} of $\pi$.
\end{enumerate}
\end{definition}

Let us now present the philosophy behind the definition of the flag weak order, by first recalling the definition of the weak order on the symmetric group. The definition of $(S_n,\leq_R)$ can be decomposed into two distinct steps:
\begin{itemize}
\item first, we consider a specific set $S$ of generator of $S_n$ (here the simple transpositions);
\item then, we consider a statistic $\ell$ on $S_n$ (here the length) and we define the weak order to be the reflexive and transitive closure of the relation defined by: $\text{for all} \ \sigma,\omega \in S_n,$
\[ \sigma \lhd_R \omega \Longleftrightarrow \exists s \in S \ \text{such that} \ \ \omega=\sigma s \ \ {\rm and} \ \ \ell(\tau)<\ell(\pi).\]
\end{itemize}
The flag weak order is defined by following a similar pattern: 
\begin{itemize}
\item first, we define a special generating set of $G(r,n)$, denoted by $A\cup B$;
\item then, we define the flag weak order to be the reflexive and transitive closure of the relation defined by: $\text{for all} \ \pi,\tau \in G(r,n),$
$$ \pi \lhd_f \tau \Longleftrightarrow \exists s \in A \cup B \ \text{such that} \ \tau=\pi s \ \ {\rm and} \ \ {\rm finv}(\tau)<{\rm finv}(\pi).$$
\end{itemize}
As one can notice, the only difference with the definition of $(S_n,\leq_R)$ is that we swapped the length with the flag inversion number (see Definition~\ref{DefFlagStat}).
Let us now formalize this construction in a definition.

\begin{definition}[Flag weak order, see~\cite{ABR}]
We denote by $A$ and $B$ the two subsets of $G(r,n)$ defined by
\begin{align*}
A&=\left\{a_i\in G(r,n) \ | \ i \in [n-1], \ a_i=((\delta_{i1},\ldots,\delta_{ii},\ldots,\delta_{in}),s_i) \right\}, \ \text{and} \\
 B&=\left\{b_i\in G(r,n) \ | \ i \in [n], \ b_i=((\delta_{i1},\ldots,\delta_{ii},\ldots,\delta_{in}),Id) \right\},
\end{align*}
where $s_i$ is the $i$-th elementary transposition of the symmetric group and $\delta_{ij}=1$ if $i=j$, and 0 otherwise.
The \emph{flag weak order} $\leq _f$ on $G(r,n)$ is the reflexive and transitive closure of the relation $\lhd_f$ defined by:
$$\forall \pi, \tau \in G(r,n), \ \pi \lhd_f \tau \Longleftrightarrow \exists s \in A \cup B \ \text{such that} \ \tau=\pi s \ \ {\rm and} \ \ {\rm finv}(\tau)<{\rm finv}(\pi).$$
\end{definition}

The following lemma provide a complete description of covering relations in the flag weak order.

\begin{lemma}[\cite{ABR}, Prop. 7.4]\label{CaracCover}
Let $\pi=((c_1,\ldots,c_n),\sigma) \in G(r,n)$ and $s \in A \cup B$. Then, $\pi s$ covers $\pi$ in the flag weak order if and only if one of the two following situations occur:
\begin{enumerate}
\item there exists $1 \leq i \leq n$ such that $s=b_i \in B$ and $c_i \neq r-1$;
\item there exists $1 \leq i \leq n-1$ such that $s=a_i \in A$, $c_{i+1}=r-1$ and $\sigma(i)<\sigma(i+1)$.
\end{enumerate}
\end{lemma}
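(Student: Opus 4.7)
My plan is to compute explicitly, using the group law of $G(r,n)$ together with the identity ${\rm finv}(\pi) = r\cdot|{\rm Inv}(\pi)| + n(\pi)$, the jump $\Delta_\pi(s) := {\rm finv}(\pi s) - {\rm finv}(\pi)$ produced by right-multiplication by each generator in $A \cup B$, and then to deduce the characterisation of covers from the elementary observation that in the transitive closure of a statistic-increasing relation, covers must be edges of minimal positive increment (complemented, in a single exceptional case, by an explicit construction of an intermediate element).

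For $s = b_i$ the color vector only changes in its $i$-th coordinate, by $+1 \bmod r$, while the underlying permutation stays the same; one obtains $\Delta_\pi(b_i) = +1$ if $c_i \neq r-1$ and $\Delta_\pi(b_i) = 1 - r$ otherwise. For $s = a_i$, the permutation becomes $\sigma s_i$ (so $|{\rm Inv}|$ varies by $\pm 1$ depending on whether $\sigma(i) < \sigma(i+1)$), the new color at position $i$ is $c_{i+1}+1 \bmod r$, and the new color at position $i+1$ is $c_i$; splitting into four sub-cases gives $\Delta_\pi(a_i) \in \{\,r+1,\ 1,\ 1-r,\ 1-2r\,\}$. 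The only strictly positive values of $\Delta$ are therefore $+1$ and $r+1$: the value $+1$ occurs exactly in situations (1) and (2) of the statement, and $r+1$ occurs only in the remaining ``upward'' case $s = a_i$ with $c_{i+1} \neq r-1$ and $\sigma(i) < \sigma(i+1)$. Since $\leq_f$ is the transitive closure of $\lhd_f$ and each $\lhd_f$-step raises ${\rm finv}$ by at least $1$, any strict $<_f$-chain of length $\geq 2$ raises ${\rm finv}$ by at least $2$; in particular, whenever $\Delta_\pi(s) = 1$ the edge $\pi \lhd_f \pi s$ admits no intermediate element and is therefore a cover. This settles the direction ``(1) or (2) $\Rightarrow$ $\pi s$ covers $\pi$''.

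For the converse, the only task left is to exclude the exceptional case $\Delta_\pi(a_i) = r+1$. There I would exhibit an explicit $\lhd_f$-chain of length $r+1$ from $\pi$ to $\pi a_i$: first apply $b_{i+1}$ a total of $r-1-c_{i+1}$ times (each step a cover by (1), since the $(i+1)$-color successively takes the values $c_{i+1}, c_{i+1}+1,\ldots,r-2$ at the moment of application); then apply $a_i$, which is now itself a cover by (2) since the $(i+1)$-color has been raised to $r-1$ while $\sigma(i) < \sigma(i+1)$ is unchanged; finally apply $b_i$ a total of $c_{i+1}+1$ times (each a cover by (1), as the $i$-color successively ranges through $0,1,\ldots,c_{i+1}$, all strictly less than $r-1$). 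A direct verification from the group law confirms that the endpoint of this chain is genuinely $\pi a_i$, yielding $\pi \lhd_f \pi b_{i+1} <_f \pi a_i$ and forbidding $\pi a_i$ from covering $\pi$. The only real obstacle is this explicit chain construction: the intermediate element $\pi b_{i+1}$ is easy to guess, but one has to track the color vector carefully through the twisted action of $a_i$ to be sure that the entire chain actually lands on $\pi a_i$ at the end.
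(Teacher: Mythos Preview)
Your proof is correct. The paper does not supply its own proof of this lemma: it is quoted verbatim from \cite{ABR}, Proposition~7.4, and used as a black box. So there is nothing to compare against on the paper's side.

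Your direct computation of $\Delta_\pi(s)$ is accurate (including the four-way split for $a_i$ yielding $\{r+1,1,1-r,1-2r\}$), and the cover characterisation via ``minimal positive increment equals $1$'' is sound because ${\rm finv}$ strictly increases along every $\lhd_f$-edge, making it a rank-like function on $\leq_f$. The explicit length-$(r+1)$ chain you build in the exceptional case $\Delta_\pi(a_i)=r+1$ is also correct: I checked that after $(r-1-c_{i+1})$ applications of $b_{i+1}$, one application of $a_i$, and $(c_{i+1}+1)$ applications of $b_i$, the resulting element is exactly $\pi a_i$, and that each intermediate step satisfies the colour condition needed for it to be a $+1$ cover. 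One small remark: you should note (it is implicit in your argument) that every cover of $\leq_f$ is necessarily a single $\lhd_f$-edge, which follows because ${\rm finv}$ strictly increases along $\lhd_f$ and hence $\leq_f$ is antisymmetric; this justifies restricting attention to generators $s\in A\cup B$ from the outset.
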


We now have enough general informations about the flag weak order to propose a valued digraph which describes the flag weak order. The key point leading us to the construction of this valued digraph is that the elements of $A$ ``look like'' the simple transpositions of $S_n$. That is, they act on $r$-coloured permutations as simple transpositions act on permutation, by swapping the positions of two adjacent entries. 
Furthermore, one can note that the function 
\[
\begin{array}{ccccc}
\phi &:& S_n &\longrightarrow& G(r,n) \\
&&\sigma &\longmapsto& ((0,\ldots,0),\sigma)
\end{array}
\]
is an injective poset morphism from $(S_n,\leq_R)$ to $(G(r,n),\leq_f)$. Thus, the flag weak order contains a sub-poset isomorphic to the weak order on $S_n$.
Using this facts as hints and after some ``guess and try'' tests on the example of $G(2,4)$, the author found out a candidate of valued digraph, which is depicted on Figure~\ref{FigExFlag}.
\begin{figure}[!h]
\includegraphics[width=0.5\textwidth]{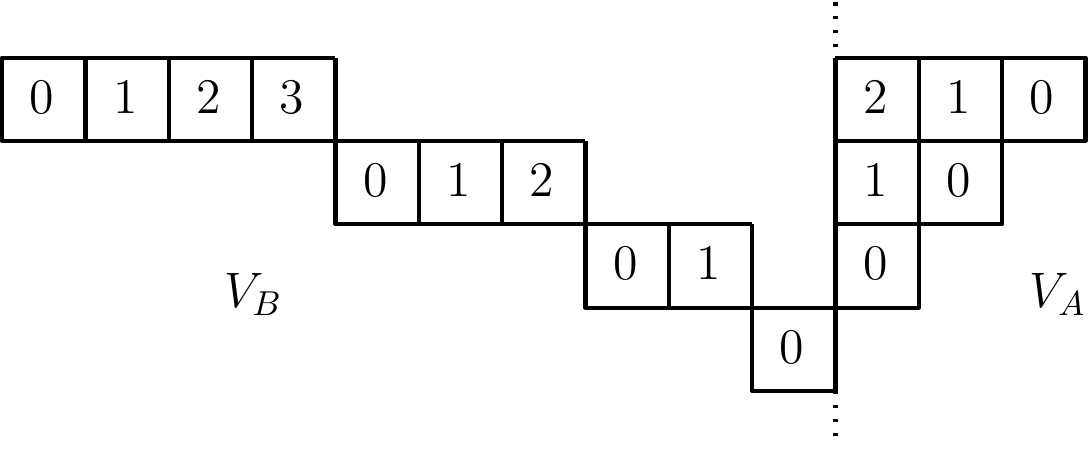}
\caption{The candidate of valued digraph for $(G(2,4),\leq_f)$}\label{FigExFlag}
\end{figure}
Once again, the digraph structure of this diagram is given implicitly, using a suitable notion of hook. A bit more formally, we say that for all boxes $\mathfrak{c}$ and $\mathfrak{d}$ in this diagram, there is an arc from $\mathfrak{c}$ to $\mathfrak{d}$ if and only if $\mathfrak{c}\neq \mathfrak{d}$ and $\mathfrak{d}$ is either in the same row and on the right of $\mathfrak{c}$, or in the same column and below $\mathfrak{c}$. With this definition, one can check that the resulting poset is indeed isomorphic to $(G(2,4),\leq_f)$.

Let us now generalize and formalize this construction to the case of $r$ and $n$ arbitrary, by first defining the diagram.

\begin{definition}
We set $\lambda_{r,n}:=V_{A,n} \cup V_{B,r,n}$ where
 \begin{align*}
V_{A,n}&:=\left\{(a,b) \in [n]^2 \ | \ a<b \right\}, \\ 
\text{and} \ V_{B,r,n}&:=\left\{ (a,b) \in \mathbb{Z} \times [n] \ | \ -b(r-1) \leq a \leq -1 \right\}.
\end{align*}
\end{definition}

\begin{figure}[!h]
\includegraphics[width=0.52\textwidth]{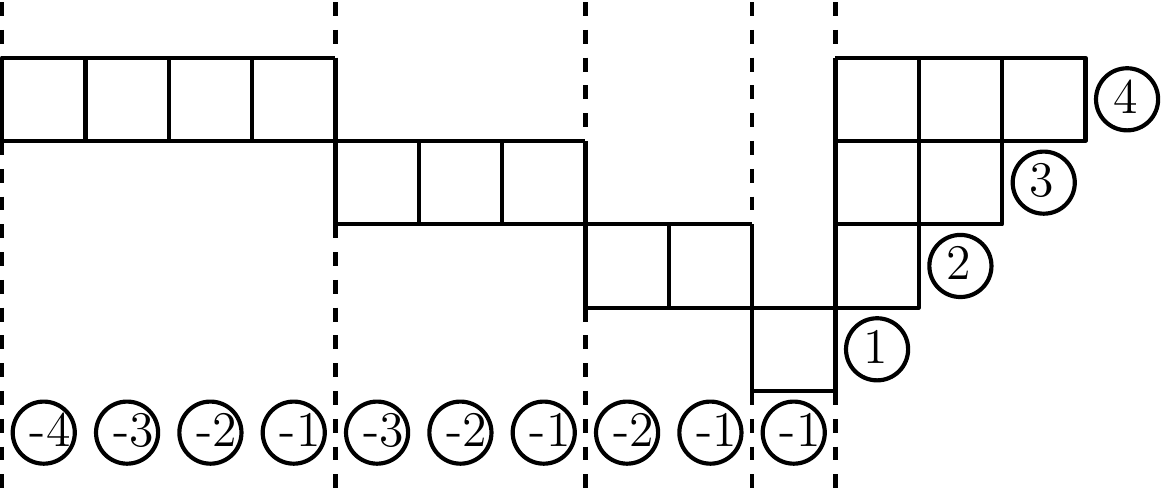}
\caption{Graphical representation of $\lambda_{2,4}$ with its coordinates}\label{FigExFlag2}
\end{figure}

Note that we have $V_{A,n}=\lambda_n$ (see Section~\ref{SectSym}), and we will sometimes use this notation. Let us now define the notion of hook associated with the diagram $\lambda_{r,n}$, being suggested by its graphical representation (see Figure~\ref{FigExFlag2}).

\begin{definition}
Let $(a,b) \in \lambda_{r,n}$, we denote by $H_f(a,b)$ the subset of $\lambda_{r,n}$ defined by:
\begin{itemize}
\item if $(a,b) \in V_{A,n},$ then 
\[
H_f(a,b):=\{(x,y) \in \lambda_{r,n} \ | \ \exists k\in \mathbb{N}, \ a<k<b, \ (x,y)=(a,k) \ \text{or} \ (k,b) \};
\]
\item if $(a,b) \in V_{B,r,n}$, then
\[
H_f(a,b):=\{(x,y) \in \lambda_{r,n} \ | \ a<x \ \text{and} \ y=b \}.
\]
\end{itemize}
\end{definition}

Eventually, we can now define the valued digraph.

\begin{definition}
Let $G=(V,E)$ be the digraph defined by
\[ V:=\lambda_{r,n} \ \text{and} \ E:=\{(\mathfrak{c},\mathfrak{d})\in \lambda_{r,n}^2 \ | \ \mathfrak{c}\neq \mathfrak{d} \ \text{and} \ \mathfrak{d} \in H_f(\mathfrak{c}\}. \]
We denote by $\G(r,n):=(G,\theta)$ the valued digraph such that for all $(a,b) \in \lambda_{r,n}$
\begin{equation*}
 \theta(a,b):=\left\{
\begin{aligned}
&b-a-1 && \text{if} \ (a,b) \in V_{A,n}, \\
&b+\left\lfloor\frac{a}{r-1} \right\rfloor && \text{if} \ (a,b)\in V_{B,r,n}.
\end{aligned}
\right.
\end{equation*}
\end{definition}

\begin{example}
We represent on Figure~\ref{FigFlag3} the valued digraph $\G(2,4)$. As one can see, this is exactly the valued digraph depicted on Figure~\ref{FigExFlag}.
\begin{figure}[!h]
\includegraphics[width=0.5\textwidth]{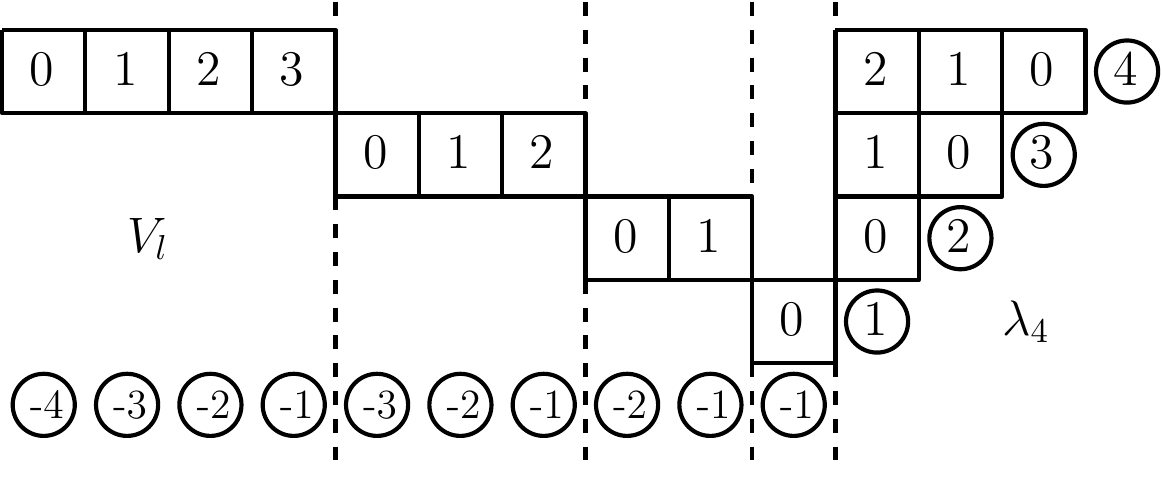}
\caption{}\label{FigFlag3}
\end{figure}
\end{example}

Our aim is now to show that $(IS(\G(r,n)),\subseteq)$ and $(G(r,n),\leq_f)$ are isomorphic, by constructing an explicit poset isomorphism. For that purpose, we will split our study into two distinct steps: we will first construct a bijection between $IS(\G(r,n))$ and $G(r,n)$ (see Definition~\ref{DefFlagBij} and Proposition~\ref{Flag1}), and then we will show that this bijection is in fact a poset isomorphism (see Theorem~\ref{Flag2}). We begin with a lemma, which shows how we can associate a permutation with each element of $IS(\G(r,n))$.

\begin{lemma}\label{FlagSym}
Let $U \in IS(\G(r,n)$, then $U \cap V_{A,n}$ is the inversion set of a permutation in $A_{n-1}$.
\end{lemma}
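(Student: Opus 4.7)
The plan is to verify that $A := U \cap V_{A,n}$ satisfies the intrinsic characterization given by Proposition~\ref{Technic} for the valued digraph $\A$ of Section~\ref{SectSym}, and then to invoke Corollary~\ref{CorolFinalA} to conclude. The whole argument rests on a compatibility observation between $\G(r,n)$ and $\A$ on the $V_{A,n}$ part of the diagram.

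The key preliminary step is to unpack the definitions of $H_f$ and $\theta$ on $V_{A,n}$. For $(a,b) \in V_{A,n}$ the hook $H_f(a,b)$ consists only of boxes of the form $(a,k)$ or $(k,b)$ with $a < k < b$; since $1 \leq a < k < b \leq n$, all such boxes lie inside $V_{A,n}$. Consequently, the arcs of the underlying digraph of $\G(r,n)$ issuing from a $V_{A,n}$ vertex stay entirely inside $V_{A,n}$, and they are exactly the arcs issuing from that vertex in the digraph underlying $\A$. Moreover, on $V_{A,n}$ we have $\theta_{\G(r,n)}(a,b) = b-a-1$, which also coincides with the type $A$ valuation $\theta_{\A}(a,b) = d^+(a,b)/2$ used in Definition~\ref{DefValGrSym}.

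Given these identifications, I would now check conditions (1) and (2) of Proposition~\ref{Technic} for $A$ inside $\A$. Since $U \in IS(\G(r,n))$, Proposition~\ref{Technic} applied to $\G(r,n)$ provides the analogous conditions for $U$ in $\lambda_{r,n}$. For (1), take $\mathfrak{c} \in A \subseteq U$; the set of arcs from $\mathfrak{c}$ landing in $U$ coincides with the set of arcs from $\mathfrak{c}$ landing in $U \cap V_{A,n} = A$, because all arcs out of $\mathfrak{c}$ end in $V_{A,n}$. Combined with the equality of valuations, the inequality for $\G(r,n)$ transfers verbatim to $\A$. For (2), take $\mathfrak{c} \in V_{A,n} \setminus A$; then $\mathfrak{c} \in \lambda_{r,n} \setminus U$, and the same argument (arcs out of $\mathfrak{c}$ are contained in $V_{A,n}$, valuations match) yields the required inequality.

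This shows $A \in IS(\A)$, and by Corollary~\ref{CorolFinalA} there exists $\sigma \in S_n \cong A_{n-1}$ with $A = \Inv(\sigma)$. I do not expect any real obstacle here: the statement is essentially a closure property of $V_{A,n}$ under the hook operation $H_f$, together with the fact that restricting $\theta$ recovers the type $A$ valuation, so Proposition~\ref{Technic} does all the work once the local comparison is made.
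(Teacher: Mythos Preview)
Your proposal is correct and follows essentially the same approach as the paper: both proofs observe that for $\mathfrak{c}\in V_{A,n}$ the hook $H_f(\mathfrak{c})$ coincides with the type $A$ hook $H(\mathfrak{c})$ (so all outgoing arcs stay in $V_{A,n}$) and that the valuations agree, then transfer the two conditions of Proposition~\ref{Technic} from $\G(r,n)$ to $\A$ and conclude via Corollary~\ref{CorolFinalA}. Your write-up is slightly more explicit about the valuation check, but the argument is the same.
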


\begin{proof}
Let us denote by $X$ the set $U \cap V_{A,n}$ and by $E'$ the set of arcs of $\A$, where $\A$ is the valued digraph associated to $(S_n,\leq_R)$ defined in Section~\ref{SectSym}.
Since $\lambda_n=V_{A,n}$, if $X \in IS(\A)$, then $X$ is the inversion set of a permutation thanks to Corollary~\ref{CorolFinalA}. We still have to show that $X$ is in $IS(\A)$. First, notice that for all $\mathfrak{c}$ in $V_{A,n}$, we have $H(\mathfrak{c})=H_f(\mathfrak{c})$,
where $H(\mathfrak{c})$ is the hook based on $\mathfrak{c}$ in $\lambda_n$ defined in Section~\ref{SectSym}. Thus, by definition of the underlying digraph of $\G(r,n)$, for all $\mathfrak{c} \in V_{A,n}$ we have
\begin{align*}
\{ \ \mathfrak{d} \in \lambda_n \ | \ \mathfrak{d}\in U , \ (\mathfrak{c},\mathfrak{d}) \in E \}&=U \cap H_f(\mathfrak{c}) \\
&=U \cap H(\mathfrak{c}) \\
&=\{ \ \mathfrak{d}\in \lambda_n \ | \ \mathfrak{d}\in U, \ (\mathfrak{c},\mathfrak{d}) \in E' \}.
\end{align*}
Consequently, for all $\mathfrak{c} \in \lambda_n$, if $\mathfrak{c} \in U$, then by Proposition~\ref{Technic} we have
\begin{align*}
\theta(\mathfrak{c}) &\leq |\{  \mathfrak{d} \in \lambda_n \ | \ \mathfrak{d}\in U, \ (\mathfrak{c},\mathfrak{d}) \in E \}|\\
&\leq |\{  \mathfrak{d}\in \lambda_n \ | \ \mathfrak{d}\in U, \ (\mathfrak{c},\mathfrak{d}) \in E' \}|,
\end{align*}
and the converse inequality holds when $\mathfrak{c} \notin U$. Thus, $X$ is in $IS(\A)$ by Proposition~\ref{Technic}, and this ends the proof.
\end{proof}

Thanks to Lemma~\ref{FlagSym}, one can associate to each element of $IS(\G(r,n))$ a permutation in $S_n$. What remains to understand is how to associate a color to each value of the permutation. For that purpose, we introduce a new notation.

\begin{definition}
Let $U \in IS(\G(r,n))$ and $i \in [n]$, we define the following two quantities
\begin{equation*}
R_i(U) := |\{  (x,i) \in U \ | \ (x,i) \in V_{A,n} \}| \ \text{and} \
L_i(U) := |\{  (x,i) \in U \ | \ (x,i) \in V_{B,r,n} \}|.
\end{equation*}
\end{definition}

\begin{lemma}\label{FlagColor}
Let $U \in IS(\G(r,n))$ and $i \in [n]$. Then, we have
\[0 \leq L_i(U) - (r-1)R_i(U) \leq r-1.\]
\end{lemma}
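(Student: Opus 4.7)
Fix a column $i \in [n]$ and enumerate the $V_{B,r,n}$-boxes in column $i$ as $B_j := (-j, i)$ for $j = 1, \ldots, i(r-1)$. Direct computation gives $\theta(B_j) = i - \lceil j/(r-1) \rceil$, and the hook $H_f(B_j) = \{(y, i) \in \lambda_{r,n} \ | \ y > -j\}$ consists of the $i - 1$ boxes of $V_{A,n}$ in column $i$ together with $B_1, \ldots, B_{j-1}$. Define $g(k) := |\{1 \leq \ell \leq k \ | \ B_\ell \in U\}|$, so that $g(0) = 0$, $g(i(r-1)) = L_i(U)$, and $|U \cap H_f(B_j)| = R_i(U) + g(j-1)$. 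By Proposition~\ref{Technic}, for each $j$ we obtain $g(j-1) \geq \alpha(j)$ when $B_j \in U$ and $g(j-1) \leq \alpha(j)$ otherwise, where $\alpha(j) := \theta(B_j) - R_i(U) = (i - R_i(U)) - \lceil j/(r-1) \rceil$.

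The function $\alpha$ is constant on each \emph{level} $I_k := \{(k-1)(r-1)+1, \ldots, k(r-1)\}$ with value $i - R_i(U) - k$, and decreases by one between consecutive levels. Setting $k_0 := i - R_i(U)$, one has $\alpha > 0$ on levels $1, \ldots, k_0 - 1$, $\alpha = 0$ on the \emph{transitional} level $I_{k_0}$, and $\alpha < 0$ on levels $k_0 + 1, \ldots, i$. Trace $g$ inductively: if $g = 0$ at the start of a level with $\alpha \geq 1$, every $B_j$ in that level must be excluded (otherwise the inclusion condition would demand $g(j-1) \geq 1$, contradicting $g(j-1) = 0$), so $g$ remains $0$ throughout, and hence $g = 0$ at the start of $I_{k_0}$. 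On $I_{k_0}$, both inclusion and exclusion are compatible with $g = 0 = \alpha$; however, once any $B_j$ is included, $g$ jumps to $1 > 0 = \alpha$, and the exclusion condition $g(j'-1) \leq \alpha$ then rules out every later exclusion in the level. Hence the in-$U$ boxes of $I_{k_0}$ form a suffix of some size $t \in \{0, 1, \ldots, r-1\}$. On each level with $\alpha < 0$, exclusion would require $g(j-1) \leq \alpha < 0$, which is impossible; consequently all $B_j$ in such a level are included, contributing $r - 1$ boxes per level.

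Summing over the levels, $L_i(U) = g(i(r-1)) = t + R_i(U)(r-1)$, whence $L_i(U) - (r-1)R_i(U) = t \in \{0, 1, \ldots, r-1\}$, which gives both inequalities of the lemma. The boundary cases $R_i(U) = 0$ (only $I_{k_0}$ contributes) and $R_i(U) = i - 1$ (level $1$ is itself transitional, with no earlier levels) fit uniformly into this argument. The main subtle step is the cascade argument on $I_{k_0}$: the strict inequality $g(j'-1) \geq 1 > 0 = \alpha$ triggered by the first inclusion forces every subsequent $B_{j'}$ in the level to be included, yielding the suffix structure and pinning down $L_i(U)$ up to the choice of $t$.
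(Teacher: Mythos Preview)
Your proof is correct and follows essentially the same approach as the paper: both use Proposition~\ref{Technic} to determine which boxes $(-j,i)\in V_{B,r,n}$ are forced into or out of $U$ by comparing $\theta(-j,i)$ with the number of out-neighbors of $(-j,i)$ lying in $U$. Your level-by-level trace via the counter $g$ is a more explicit reorganization of the paper's argument---the paper instead names the forced-in set $X$ and the forced-out set $Y$ directly and handles $Y$ by a short backward induction---with the minor bonus that your suffix analysis on $I_{k_0}$ also recovers the left-justified structure of Lemma~\ref{FlagLeft}.
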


\begin{proof}
By definition, for all $(x,i) \in V_{B,r,n}$ and $(y,i) \in V_{A,n}$, we have $((x,i),(y,i))\in E$. Thus, we have for all $(x,i) \in V_{B,r,n}$
\begin{equation*}
 R_i(U) \leq | \{ \mathfrak{d} \in U \ | \ ((x,i),\mathfrak{d}) \in E \} |.
\end{equation*}
Let us consider the following set
\[ X= \{(x,i) \in V_{B,r,n} \ | \ -i(r-1)\leq x < -(i-R_i(U))(r-1) \}. \] 
Clearly, we have $|X| = (r-1)R_i(U)$. Moreover, by definition we have for all $(x,i) \in X$ that
\begin{align*}
&\frac{x}{r-1}<R_i(U)-i \ \
\Longrightarrow  \ \ \left\lfloor\frac{x}{r-1} \right\rfloor<R_i(U)-i \ \
\Longrightarrow  \ \ i+\left\lfloor\frac{x}{r-1} \right\rfloor<R_i(U) \\
\Longrightarrow  \ \ &\theta(x,i) < R_i(U) \ \
\Longrightarrow  \ \ \theta(x,i) < | \{ \mathfrak{d} \in U \ | \ ((x,i),\mathfrak{d}) \in E \} |.
\end{align*}
We thus have $X \subseteq U$ by Proposition~\ref{Technic}. Therefore, we have
\begin{equation}\label{EqProofFlag1}
(r-1)R_i(U)=|X| \leq L_i(U) \ \ \Longrightarrow \ \ 0 \leq L_i(U) - (r-1)R_i(U). 
\end{equation}
To prove the converse inequality, we consider the set
\[ Y= \{(y,i) \in V_{B,r,n} \ | \ -(i-R_i(U)-1)(r-1)\leq y \leq -1 \}. \] 
For all $(y,i) \in Y$, we have
\begin{equation*}
R_i(U)-i+1 \leq  \frac{y}{r-1} \ \
\Longrightarrow  \ \ R_i(U)-i <  \left\lfloor\frac{y}{r-1}\right\rfloor \ \
\Longrightarrow  \ \ R_i(U) <  \theta(y,i).
\end{equation*}
Let us now fix $(y,i)$ in $Y$. We will show by backward induction on $y$ that $(y,i) \notin U$. By definition, each arc having $(-1,i)$ as starting point has an element of $V_{A,n}$ as ending point. Moreover, such an arc has its ending point in row $i$, so that we have
\[ | \{ \mathfrak{d} \in U \ | \ ((-1,i),\mathfrak{d}) \in E \} | \ \leq \ R_i(U) \ < \ \theta(-1,i).\]
Thus, $(-1,i) \notin U$ by Proposition~\ref{Technic}, and one can finish the induction using similar arguments. Consequently, $Y \cap U$ is empty. However, we have $|Y|=(i-R_i(U)-1)(r-1)$, so that 
\begin{equation}\label{EqFlagProof2}
L_i(U) \leq i(r-1)-|Y| \ = \ (R_i(U)+1)(r-1) \ \Longrightarrow \ L_i(U)-R_i(U)(r-1) \leq r-1.
\end{equation}
Combining \ref{EqFlagProof2} and \ref{EqProofFlag1}, we have the expected result.
\end{proof}

Thanks to Lemma~\ref{FlagColor} and Lemma~\ref{FlagSym}, we are now able to associate a $r$-colored permutation to each element of $IS(\G(r,n))$.

\begin{definition}\label{DefFlagBij}
Let $U \in IS(\G(r,n))$. 
We denote by $\sigma_U$ the unique permutation such that $\Inv(\sigma_U)=U \cap V_{A,n}$, and we denote by $(c_i(U))_{1 \leq i \leq n}$ the sequence defined by 
\[ c_{\sigma^{-1}(i)}(U):=L_i(U)-(r-1)R_i(U).\]
We denote by $\Psi$ the map from $IS(\G(r,n))$ to $G(r,n)$ defined by
\[ \text{for all} \ U\in IS(\G(r,n)), \ \Psi(U) \ = \ ((c_i(U))_{1 \leq i \leq n},\sigma_U). \]
\end{definition}

In what follows, we will show that the function $\Psi$ is a poset isomorphism between $(IS(\G(r,n)),\subseteq)$ and $(G(r,n),\leq_f)$. We first give a technical lemma, which is useful for both step of our proof.

\begin{lemma}\label{FlagLeft}
Let $U \in IS(\G(r,n))$ and $(a,b) \in V_{B,r,n}$. Then,
$ \text{for all} \ k\in \mathbb{Z} \ \text{such that}$
 \[-b(r-1)\leq k < a,\]
$\text{if} \ (a,b) \in U, \ \text{then} \ (k,b) \in U.$ 
On the representation of $\G(r,n)$ as a diagram, this means that if a box of $V_{B,r,n}$ is in $U$, then all the boxes which are strictly on its left and in the same row are also in $U$.
\end{lemma}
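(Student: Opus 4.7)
The plan is to prove the lemma by a direct contradiction using the intrinsic characterization of $IS(\G)$ from Proposition~\ref{Technic}. The argument should reduce to two simple observations: first, that the hook $H_f(k,b)$ strictly contains $H_f(a,b) \cup \{(a,b)\}$ whenever $k<a$, since on $V_{B,r,n}$ hooks are just entire row-tails; and second, that the valuation $\theta$ is non-increasing as one moves left along a row of $V_{B,r,n}$.

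Concretely, I would fix $(a,b) \in V_{B,r,n}$ with $(a,b) \in U$ and an integer $k$ with $-b(r-1) \le k < a$, and suppose for contradiction that $(k,b) \notin U$. Applying Proposition~\ref{Technic}(1) to $(a,b)$ yields
\[
\theta(a,b) \le |U \cap H_f(a,b)|,
\]
while applying Proposition~\ref{Technic}(2) to $(k,b)$ yields
\[
\theta(k,b) \ge |U \cap H_f(k,b)|.
\]
Because $(a,b) \in V_{B,r,n}$, the hook description $H_f(a,b) = \{(x,b) \in \lambda_{r,n} \mid x > a\}$ (and similarly for $(k,b)$) together with $k<a$ gives the strict inclusion $H_f(a,b) \subsetneq H_f(k,b)$, with $(a,b)$ belonging to $H_f(k,b) \setminus H_f(a,b)$. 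Since $(a,b) \in U$, this forces $|U \cap H_f(k,b)| \ge |U \cap H_f(a,b)| + 1$, whence $\theta(k,b) \ge \theta(a,b) + 1$.

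To close the contradiction, I would invoke the explicit formula $\theta(x,b) = b + \lfloor x/(r-1)\rfloor$ valid on $V_{B,r,n}$. From $k<a$ we immediately get $\lfloor k/(r-1)\rfloor \le \lfloor a/(r-1)\rfloor$, hence $\theta(k,b) \le \theta(a,b)$, contradicting the previous inequality. There is no real obstacle here; the only points to verify are that $(a,b)$ indeed lies in $\lambda_{r,n} \cap H_f(k,b)$ (which is exactly the content of the hypothesis $-b(r-1) \le k < a \le -1$) and that we may assume $r \ge 2$, as otherwise $V_{B,r,n}$ is empty and the statement is vacuous.
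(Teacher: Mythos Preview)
Your proof is correct and follows essentially the same route as the paper: both arguments combine Proposition~\ref{Technic} applied to $(a,b)$ and to $(k,b)$, the strict containment $H_f(a,b)\cup\{(a,b)\}\subseteq H_f(k,b)$, and the monotonicity $\theta(k,b)\le\theta(a,b)$ on $V_{B,r,n}$. The only cosmetic difference is that you phrase it as a contradiction from $(k,b)\notin U$, whereas the paper deduces the strict inequality $\theta(k,b)<|U\cap H_f(k,b)|$ directly and then invokes Proposition~\ref{Technic} to conclude $(k,b)\in U$.
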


\begin{proof}
Let $k$ in $\mathbb{Z}$ be such that $-b(r-1)\leq k < a$ and assume that $(a,b) \in U$. By Proposition~\ref{Technic}, we have
\[ \theta(a,b) \leq |\{\mathfrak{d} \in U \ | \ ((a,b),\mathfrak{d})\in E \}|. \]
Let us consider $\mathfrak{d}\in U$ such that $((a,b),\mathfrak{d})\in E$. By construction of the underlying digraph of $\G(r,n)$, we have $((k,b),\mathfrak{d}) \in E$. Moreover, $((k,b),(a,b))$ is also in $E$, so that
\[ |\{\mathfrak{d} \in U \ | \ ((a,b),\mathfrak{d})\in E \}|<|\{\mathfrak{d} \in U \ | \ ((k,b),\mathfrak{d})\in E \}|. \]
Finally, by definition of $\theta$ we have $\theta(k,b) \leq \theta(a,b)$, hence we have
\[ \theta(k,b) < |\{\mathfrak{d} \in U \ | \ ((k,b),\mathfrak{d})\in E \}|. \]
Thus, $(k,b) \in U$ by Proposition~\ref{Technic}, and this concludes the proof.
\end{proof}

\begin{proposition}\label{Flag1}
The function $\Psi$ is a bijection.
\end{proposition}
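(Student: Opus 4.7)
My plan is to establish that $\Psi$ is well-defined, injective, and surjective, with the main effort going into constructing an explicit preimage for each $r$-coloured permutation and then verifying it belongs to $IS(\G(r,n))$ via the intrinsic characterisation of Proposition~\ref{Technic}.

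First, well-definedness follows from the two preceding lemmas: Lemma~\ref{FlagSym} ensures that $U \cap V_{A,n}$ is the inversion set of a unique permutation $\sigma_U \in S_n$, and Lemma~\ref{FlagColor} guarantees that each $c_{\sigma_U^{-1}(i)}(U) = L_i(U) - (r-1) R_i(U)$ lies in $\{0, 1, \ldots, r-1\}$, so the colour vector indeed represents a valid element of $(\mathbb{Z}_r)^n$.

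For injectivity, suppose $\Psi(U) = \Psi(U')$. Equality of the permutation parts yields $U \cap V_{A,n} = \Inv(\sigma_U) = \Inv(\sigma_{U'}) = U' \cap V_{A,n}$, which forces $R_i(U) = R_i(U')$ for every $i$. Combined with the equality of the colour vectors, the defining relation then gives $L_i(U) = L_i(U')$ for every row $i$. At this point Lemma~\ref{FlagLeft} is crucial: it tells us that, in each row $i$ of $V_{B,r,n}$, the set $U$ intersects the row in an initial segment starting from the leftmost box. Hence this intersection is entirely determined by its cardinality $L_i(U)$, so $U \cap V_{B,r,n} = U' \cap V_{B,r,n}$ and finally $U = U'$.

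For surjectivity, given $\pi = ((c_1, \ldots, c_n), \sigma) \in G(r,n)$, I would define $U := \Inv(\sigma) \cup B$, where $B$ contains, for each $i \in [n]$, the $c_{\sigma^{-1}(i)} + (r-1) R_i$ leftmost boxes of row $i$ of $V_{B,r,n}$, with $R_i := |\{(j,i) \in \Inv(\sigma)\}|$. The bounds $c_{\sigma^{-1}(i)} \leq r-1$ and $R_i \leq i-1$ give $c_{\sigma^{-1}(i)} + (r-1) R_i \leq i(r-1)$, which is precisely the length of row $i$ in $V_{B,r,n}$, so $B$ is well-defined; the identity $\Psi(U) = \pi$ is then immediate from the construction as soon as $U \in IS(\G(r,n))$. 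It remains to verify conditions (1) and (2) of Proposition~\ref{Technic} at every cell of $\lambda_{r,n}$. For cells of $V_{A,n}$ the out-neighbourhoods lie entirely inside $V_{A,n}$, so the verification reduces to the condition already known to hold for $\Inv(\sigma) \in IS(\A)$ from Section~\ref{SectSym}. The main obstacle lies in the cells of $V_{B,r,n}$, whose out-neighbours span both the remainder of their row inside $V_{B,r,n}$ and the entries $(x,i) \in V_{A,n}$ sharing the same row index: there one must carefully match the piecewise formula $\theta(a,b) = b + \lfloor a/(r-1) \rfloor$ against the number of boxes of $U$ lying to the right of $(a,b)$ in the diagram, exploiting the identity $L_i(U) = c_{\sigma^{-1}(i)} + (r-1) R_i$ together with the bounds $0 \leq c_{\sigma^{-1}(i)} \leq r-1$ to close both inequalities simultaneously.
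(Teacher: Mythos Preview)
Your proposal is correct and follows essentially the same approach as the paper: injectivity via Lemma~\ref{FlagLeft} after matching the $R_i$ and $L_i$, and surjectivity by defining the candidate preimage as $\Inv(\sigma)$ together with the $c_{\sigma^{-1}(i)}+(r-1)R_i$ leftmost boxes of each row of $V_{B,r,n}$, then checking Proposition~\ref{Technic} separately on $V_{A,n}$ and $V_{B,r,n}$. The only difference is that the paper actually carries out the verification on $V_{B,r,n}$ (its cases~(3) and~(4)), and there the argument is in fact simpler than your sketch suggests: one only needs the crude bound $R_i \leq |\{\mathfrak{d}\in U \mid (\mathfrak{c},\mathfrak{d})\in E\}|$ together with the inequality $\theta(x,i)\leq R_i$ (resp.\ $>R_i$) obtained directly from $\lfloor x/(r-1)\rfloor$, rather than an exact count of out-neighbours in $U$.
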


\begin{proof}
Let us first prove that $\Psi$ is injective. Let $U,U'\in \G(r,n)$ such that $\Psi(U)=\Psi(U')$. Then $\sigma_U=\sigma_{U'}$, hence $\Inv(\sigma_U)=\Inv(\sigma_{U'})$, so that $U \cap V_{A,n}=U'\cap V_{A,n}$.
Therefore, we have $R_j(U)=R_j(U')$ for all $j \in [n]$. 
Let us now fix $j \in [n]$, by definition of $(c_i)_{1 \leq i\leq n}$ we have 
\[ L_j(U)-(r-1)R_j(U)=L_j(U')-(r-1)R_j(U'), \]
so that $L_j(U)=L_j(U')$. Thus, the number of boxes that are in $V_{B,r,n} \cap U$ and in row $j$ equals the number of the boxes that are in $V_{B,r,n} \cap U'$ and in row $j$. However, thanks to Lemma~\ref{FlagLeft}, these boxes are left-justified in $V_{B,r,n}$, hence we have
\[ \{(a,j) \in V_{B,r,n} \ | \ (a,j) \in U \}=\{(a,j) \in V_{B,r,n} \ | \ (a,j) \in U' \}. \]
Thus, $U \cap V_{B,r,n}=U' \cap V_{B,r,n}$ so $U=U'$, and this proves that $\Psi$ is injective.

We now prove that $\Psi$ is surjective. Let $\pi=((c_i)_i,\sigma) \in G(r,n)$, we denote by $R_i$ the quantity defined by
\[ R_i:=|\{(x,i) \in V_{A,n} \ | \ (x,i) \in \Inv(\sigma) \}|, \]
and we define the following sets:
\[ \text{for all} \ i\in[n], \ U_i=\{(x,i) \ | \ -i(r-1) \leq x < -(i-R_i)(r-1)+c_{\sigma^{-1}(i)} \}. \]
We will prove that $U:=\Inv(\sigma)\cup U_1 \cup U_2 \cup \ldots \cup U_n$ is in $IS(\G(r,n))$. For that purpose, let us consider $\mathfrak{c} \in \lambda_{r,n}$ and divide our study into four cases.
\begin{enumerate}
\item (case $\mathfrak{c}\in U \cap V_{A,n}$). Following a similar method as in the proof of Lemma~\ref{FlagSym}, one can show that 
\[
\theta(\mathfrak{c}) \leq |\{  \mathfrak{d} \in U \ | \ (\mathfrak{c},\mathfrak{d}) \in E \}|
\]
\item (case $\mathfrak{c}\in V_{A,n} \setminus U$). A similar argument as in Case (1) shows that 
\[
\theta(\mathfrak{c}) \geq |\{  \mathfrak{d} \in U \ | \ (\mathfrak{c},\mathfrak{d}) \in E \}|
\]
\item (case $\mathfrak{c} \in U \cap V_{B,r,n}$). We set $(x,i)=\mathfrak{c}$. By definition of $U_i$, we have
\begin{align*}
& \ x < -(i-R_i)(r-1)+c_{\sigma^{-1}}(i) \
\Longrightarrow \ x < -(i-R_i)(r-1)+(r-1) \\
&\Longrightarrow \ \frac{x}{r-1} < 1+R_i-i \
\Longrightarrow \ \left\lfloor\frac{x}{r-1}\right\rfloor \leq R_i-i \
\Longrightarrow \ \theta(\mathfrak{c}) \leq R_i,
\end{align*}
but $R_i \leq |\{  \mathfrak{d} \in U \ | \ (\mathfrak{c},\mathfrak{d}) \in E \}|$ by definition of the digraph, hence we have 
\[ \theta(\mathfrak{c}) \leq |\{  \mathfrak{d} \in U \ | \ (\mathfrak{c},\mathfrak{d}) \in E \}|. \]
\item (case $\mathfrak{c} \in V_{B,r,n} \setminus U$). A similar argument as in case (3) shows that 
\[ \theta(\mathfrak{c}) \geq |\{  \mathfrak{d} \in U \ | \ (\mathfrak{c},\mathfrak{d}) \in E \}|. \]
\end{enumerate}
Consequently, $U \in IS(\G(r,n))$ thanks to Proposition~\ref{Technic}, and by construction we have $\Psi(U)=\pi$. Thus, $\Psi$ is surjective, and this conclude the proof.
\end{proof}

We now prove that $\Psi$ is a morphism of posets.

\begin{proposition}\label{Flag2}
Let $U,U' \in IS(\G(r,n))$, we have that $U$ covers $U'$ in $(IS(\G(r,n)),\subseteq)$ if and only if $\Psi(U)$ covers $\Psi(U')$ in the flag weak order.
\end{proposition}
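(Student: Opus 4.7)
The plan is to identify, for each $U' \in IS(\G(r,n))$, which boxes $\mathfrak{c} \in \lambda_{r,n}\setminus U'$ are erasable in $\G(r,n)_{U'}$, then compare the effect of adding such a box to $U'$ with the right-multiplication by a generator of $A \cup B$ described in Lemma~\ref{CaracCover}. Since $\Psi$ is already known to be a bijection (Proposition~\ref{Flag1}), matching the two covering relations will be enough. I split the analysis according to whether the erasable box lies in $V_{A,n}$ or in $V_{B,r,n}$.

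The preliminary step is to compute $\theta_{U'}(\mathfrak{c})$. By the definition of $H_f$, a box $(a,b)\in V_{A,n}$ has successors only in $V_{A,n}$, so the type-$A$ argument of Section~\ref{SectionSymProof} carries over to give $\theta_{U'}(a,b) = d_{\sigma_{U'}}(a,b)$; however its predecessors lie both in $V_{A,n}$ and in $V_{B,r,n}$ (namely every box of row $b$ of $V_{B,r,n}$). A box $(x_0,b)\in V_{B,r,n}$ has its entire hook in row $b$ and its predecessors in $V_{B,r,n}$ to the left in the same row. Using Lemma~\ref{FlagLeft} to describe $U' \cap V_{B,r,n}$ row by row as a left-justified prefix, a direct count gives
\[ \theta_{U'}(x_0,b) \;=\; b + \left\lfloor \frac{x_0}{r-1} \right\rfloor - R_b(U'). \]

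From these formulas I deduce the central lemma. Writing $c_i(U')$ for the $i$-th color of $\Psi(U')$, a box $(a,b)\in V_{A,n}\setminus U'$ is erasable in $\G(r,n)_{U'}$ if and only if $a$ and $b$ are adjacent in $\sigma_{U'}$ and $c_{\sigma_{U'}^{-1}(b)}(U') = r-1$; a box $(x_0,b)\in V_{B,r,n}\setminus U'$ is erasable if and only if $x_0 = -b(r-1)+L_b(U')$ (the leftmost position of row $b$ outside $U'$) and $c_{\sigma_{U'}^{-1}(b)}(U') < r-1$. The adjacency part follows from Proposition~\ref{TechSym} applied to the induced $V_{A,n}$-subdigraph, and the additional $V_{B,r,n}$-predecessor condition for a box in $V_{A,n}$, when unfolded using the formula above, translates into precisely the ``carry'' condition $c_{\sigma_{U'}^{-1}(b)}(U') = r-1$. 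These two cases match exactly the two alternatives of Lemma~\ref{CaracCover}, corresponding respectively to the generators $a_{\sigma_{U'}^{-1}(a)}$ and $b_{\sigma_{U'}^{-1}(b)}$.

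Finally, I will verify that when $\mathfrak{c}$ is erasable and $U=U'\cup\{\mathfrak{c}\}$, then $\Psi(U)$ equals $\Psi(U')$ times the expected generator. In the $V_{A,n}$-case with $j=\sigma_{U'}^{-1}(a)$, one has $\sigma_U=\sigma_{U'}s_j$ and the only change among the statistics $L_i, R_i$ is $R_b(U)=R_b(U')+1$; plugging this into Definition~\ref{DefFlagBij} and comparing with the group law in $G(r,n)$ shows that the modular carry $c_{j+1}(U')+1 \equiv 0 \pmod r$ matches $c_j(U)=0$ while the other colors are simply permuted, giving $\Psi(U)=\Psi(U')\cdot a_j$. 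In the $V_{B,r,n}$-case the permutation is unchanged, only $L_b(U)=L_b(U')+1$, and a direct inspection yields $\Psi(U)=\Psi(U')\cdot b_{\sigma_{U'}^{-1}(b)}$. Together with the bijectivity of $\Psi$ this establishes the equivalence of covering relations in both directions. The main obstacle is the bookkeeping for $V_{B,r,n}$-predecessors of a box in $V_{A,n}$: the valuation $\theta$ on $V_{B,r,n}$ is designed precisely so that the threshold at which the leftmost empty box of row $b$ becomes blocking coincides with $c_{\sigma_{U'}^{-1}(b)}=r-1$, and the verification ultimately reduces to the identity $\lfloor ((r-1)R+c)/(r-1)\rfloor = R+[c=r-1]$ for $c\in\{0,\ldots,r-1\}$.
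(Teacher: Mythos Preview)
Your proof is correct, and it takes a genuinely different route from the paper's.

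The paper never computes the residual valuation $\theta_{U'}$ or characterizes which boxes are erasable in $\G(r,n)_{U'}$. Instead, knowing that both $U$ and $U'$ lie in $IS(\G(r,n))$, it invokes Lemma~\ref{FlagColor} (the bound $0\le L_i-(r-1)R_i\le r-1$) for each of them and argues by contradiction: if the relevant color were not $r-1$ (resp.\ were $r-1$) in the $V_{A,n}$ (resp.\ $V_{B,r,n}$) case, the bound would be violated for the other set. The converse direction is then dispatched in two lines by reading off how multiplication by $a_i$ or $b_i$ changes the data.

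Your approach, by contrast, is constructive: you compute $\theta_{U'}$ explicitly on each part of the diagram, identify precisely which boxes are erasable, and observe that the resulting conditions coincide verbatim with the two alternatives of Lemma~\ref{CaracCover}. This yields a bijection between the covers of $U'$ in $(IS(\G(r,n)),\subseteq)$ and the covers of $\Psi(U')$ in the flag weak order, and you check that $\Psi$ realizes it; bijectivity of $\Psi$ then closes the loop. What your method buys is an explanation of why the valuation on $V_{B,r,n}$ is chosen as $b+\lfloor x/(r-1)\rfloor$: your closing identity $\lfloor((r-1)R+c)/(r-1)\rfloor=R+[c=r-1]$ shows that the leftmost empty box of row $b$ becomes blocking exactly when the associated color equals $r-1$. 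The paper's route is shorter but treats Lemma~\ref{FlagColor} as a black box; yours is longer but self-contained at the level of the peeling process.

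One small point worth making explicit in your write-up: when the entire row $b$ of $V_{B,r,n}$ already lies in $U'$ (so there is no ``leftmost empty box''), the $V_{B,r,n}$-predecessor condition for a box in $V_{A,n}$ is vacuous, and one checks separately that $L_b(U')=b(r-1)$ forces $R_b(U')=b-1$ and hence $c_{\sigma_{U'}^{-1}(b)}(U')=r-1$, so the characterization still holds.
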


\begin{proof}
We set
\[ \Psi(U):=((c_i)_i,\sigma)=\pi \ \text{and} \ \Psi(U'):=((c'_i)_i,\omega)=\pi'. \]

Assume that $U$ covers $U'$ in $(IS(\G(r,n)),\subseteq)$.
Since $(IS(\G(r,n)),\subseteq)$ is graded, there exists $(x,y) \in \lambda_{r,n} \setminus U$ such that 
\[ U=U' \cup \{ (x,y) \}. \]
We will prove that $\Psi(U)$ covers $\Psi(U')$ using Lemma~\ref{CaracCover}. There are two cases.
\begin{itemize}
\item (Case $(x,y) \in V_{A,n}$). We have that $\omega$ is obtained from $\sigma$ by swapping positions of $x$ and $y$. Moreover, by definition of $\Psi$ we have the following two facts:
\[ \sigma^{-1}(x)=\sigma^{-1}(y)+1 \ \text{and} \ (x,y) \notin \Inv(\sigma), \]
so that $x$ and $y$ are adjacent in $\sigma$. It remains to show that $c_{\sigma^{-1}(y)}(U)=r-1$. For the sake of clarity, let us denote by $i$ the integer $\sigma^{-1}(y)$, and assume by contradiction that $c_{i}(U)<r-1$. Since we have 
\[ c_i(U)=L_i(U)-(r-1)R_i(U), \L_i(U)=L_i(U') \ \text{and} \ R_i(U')=R_i(U)+1, \]
we thus have $L_i(U')-(r-1)R_i(U')<0$, and this contradicts Lemma~\ref{FlagColor}. Therefore, we have 
$\pi'=\pi.a_i, \ c_{i+1}=r-1 \ \text{and} \ \sigma(i)<\sigma(i+1)$,
so that $\pi'$ covers $\pi$ in $G(r,n)$ by Lemma~\ref{CaracCover}.
\item (Case $(x,y) \in V_{B,r,n}$). As in the previous case, let us denote by $i$ the integer $\sigma^{-1}(y)$, and assume by contradiction that $c_{i}(U)=r-1$. By definition, we have 
\[  c_i(U)=L_i(U)-(r-1)R_i(U)=r-1 \ \text{and} \ L_i(U')=L_i(U)+1, \] 
so that $c_i(U')=r$, which is absurd. Thus, we have $c_i(U) < r-1$ and it is clear that $\pi'=\pi.b_i$. Consequently, $\pi'$ covers $\pi$ by Lemma~\ref{CaracCover}.
\end{itemize}

We now prove the converse. 
If $\pi'=\pi. b_i$, then $c_i(B)=c_i(A)+1$, so that $U'$ is obtained from $U$ by adding just one box in the $i$-th line of $V_{B,r,n}$ by Lemma~\ref{FlagLeft}. Thus $U'$ covers $U$. If $\pi'=\pi. a_i$, then a straightforward calculation using the definition of the function $\Psi$ shows that $U'=U \cup \{(\sigma^{-1}(i),\sigma^{-1}(i+1)\}$, so that $U'$ covers $U$. This concludes the proof.
\end{proof}

As an immediate consequence of Propositions \ref{Flag1} and \ref{Flag2}, we have the following corollary, which concludes this section.

\begin{corollary}
The posets $(G(r,n),\leq_f)$ and $(IS(\G(r,n)),\subseteq)$ are isomorphic.
\end{corollary}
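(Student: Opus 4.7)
The plan is to use the map $\Psi$ constructed in Definition~\ref{DefFlagBij} as the explicit isomorphism; essentially all the work has already been packaged into Propositions~\ref{Flag1} and \ref{Flag2}, so what remains is just to assemble them into an isomorphism statement.

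First, by Proposition~\ref{Flag1}, the map $\Psi: IS(\G(r,n)) \to G(r,n)$ is a bijection of underlying sets. To upgrade this to a poset isomorphism, I need to verify both $\Psi$ and $\Psi^{-1}$ are order-preserving. Since both posets are graded and finite (note that $\lambda_{r,n}$ is finite, so $(IS(\G(r,n)),\subseteq)$ is a finite complete lattice by Theorem~\ref{Lattice}, and $G(r,n)$ is a finite group whose flag weak order is also graded via the statistic $\mathrm{finv}$), it suffices to check that $\Psi$ preserves the covering relation in both directions; the full order relation is then recovered by transitivity and reflexivity.

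The covering-preservation is exactly the content of Proposition~\ref{Flag2}: $U$ covers $U'$ in $(IS(\G(r,n)),\subseteq)$ if and only if $\Psi(U)$ covers $\Psi(U')$ in the flag weak order. Therefore, given $U \subseteq U'$ in $IS(\G(r,n))$, one can use Proposition~\ref{PropGradedPoset} to find a saturated chain $U = U_0 \subsetneq U_1 \subsetneq \cdots \subsetneq U_k = U'$ of coverings in $(IS(\G(r,n)),\subseteq)$; applying $\Psi$ and using Proposition~\ref{Flag2} repeatedly gives a chain $\Psi(U_0) \lhd_f \Psi(U_1) \lhd_f \cdots \lhd_f \Psi(U_k)$, whence $\Psi(U) \leq_f \Psi(U')$. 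Conversely, given $\pi \leq_f \pi'$ in $G(r,n)$, one can decompose this relation along a saturated chain of $\lhd_f$-coverings (by definition of $\leq_f$ as the reflexive-transitive closure of $\lhd_f$), pull it back via $\Psi^{-1}$ using Proposition~\ref{Flag2}, and conclude $\Psi^{-1}(\pi) \subseteq \Psi^{-1}(\pi')$.

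There is no real obstacle here, since all the substantive combinatorial content has already been proved. The only mild care required is to make sure that the covering relations indeed generate the order on both sides: on the $IS$ side this is Proposition~\ref{PropGradedPoset}, and on the flag weak order side this is immediate from the definition as the reflexive-transitive closure of $\lhd_f$. Combining the bijectivity from Proposition~\ref{Flag1} with the covering-compatibility from Proposition~\ref{Flag2} yields that $\Psi$ is the desired poset isomorphism between $(IS(\G(r,n)),\subseteq)$ and $(G(r,n),\leq_f)$.
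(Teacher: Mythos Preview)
Your proof is correct and follows exactly the approach the paper intends: the paper simply states the corollary as ``an immediate consequence of Propositions~\ref{Flag1} and~\ref{Flag2}'', and you have faithfully unpacked why a bijection preserving covers between two graded posets is a poset isomorphism. The additional care you take in noting that covers generate the order on both sides (via Proposition~\ref{PropGradedPoset} and the definition of $\leq_f$) makes the argument explicit but does not depart from the paper's reasoning.
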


\subsection{Down-set (resp. up-set) lattice of a finite poset}\label{SectionDownSet}

In this section, we consider $\Pp=(P,\leq)$ a finite poset. Let us denote by $G=(V,E)$ the digraph defined by
\[V:=P \ \text{and} \ E:=\{(x,y)\in P^2 \ | \ x\neq y \ \text{and} \ x \leq y \}. \]
It is clear that $G$ is a simple acyclic digraph, and we denote by $\G(P)=(G,\theta)$ the valued digraph such that for all $z \in P$, $\theta(z)=0$. 

\begin{proposition}\label{PropLinearExtens}
The set $PS(\G(P))$ equals the set of the linear extensions of $P$.
\end{proposition}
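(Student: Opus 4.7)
The plan is to show that the peeling process on $\G(P)$ reduces exactly to the classical procedure of iteratively removing a minimal element from the subposet that remains --- which, as recalled at the start of Section~\ref{SecDef}, produces exactly the linear extensions of $P$.

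The first step I would carry out is to establish the following invariant by induction on $i$: at every step $i$ of the peeling process applied to $\G(P)$, the valuation $\theta_i$ is identically $0$ on the vertices of $G_i$. The case $i=1$ holds by the very definition of $\G(P)$. For the inductive step, assume $\theta_i \equiv 0$ and let $x_i$ be the erasable vertex chosen. The second condition in the definition of \emph{erasable vertex} forces every in-neighbor $y$ of $x_i$ in $G_i$ to satisfy $\theta_i(y) \neq 0$; but by the induction hypothesis no such $y$ can exist, so $x_i$ has no in-neighbors in $G_i$. Consequently the update rule $\theta_{i+1}(y) = \theta_i(y)-1$ applies to no vertex at all, and $\theta_{i+1}$ is still identically $0$.

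With this invariant in hand, I would then unwind the definitions. An in-neighbor of $x$ in $G_i$ is by construction an element of the current subposet $P_i := V(G_i)$ which is strictly below $x$ in $P$. Combined with $\theta_i \equiv 0$, the erasability condition then reads: $x \in P_i$ is erasable in $\G_i$ if and only if $x$ has no strict lower bound inside $P_i$, i.e., $x$ is a minimal element of the induced poset $(P_i, \leq)$. Hence the peeling process on $\G(P)$ is precisely the rule ``at each step, pick a minimal element of the remaining subposet and remove it''.

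The conclusion is now immediate: the discussion preceding Definition~\ref{DefPeelSeq} already observes that a sequence of elements of a finite poset obtained by iteratively extracting a minimal element is a linear extension of the poset, and that conversely every linear extension arises in this way. This identifies $PS(\G(P))$ with the set of linear extensions of $P$. The argument is essentially routine; the only non-automatic point is the constancy $\theta_i \equiv 0$, and I would highlight this as the crux of the proof.
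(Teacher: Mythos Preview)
Your proof is correct and follows essentially the same approach as the paper. The paper packages your invariant $\theta_i\equiv 0$ slightly differently, by observing that after peeling a minimum $z$ one has $\G(P)' = \G(P\setminus\{z\})$, but this amounts to the same observation: since the erasable vertex has no in-neighbors, the valuation stays identically zero and erasability continues to coincide with minimality.
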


\begin{proof}
Let us perform the peeling process on $\G(P)$. By definition of $\theta$, a vertex $z \in P$ is erasable in $\G(P)$ if and only if we have the following property: $\text{for all} \ y \in P$
\[ \text{if} \ y \leq z, \ \text{then} \ y=z, \]
\emph{i.e.} $z$ is a minimum of $(P,\leq)$. Let us denote by $\G(P)'$ the valued digraph obtained after we peeled a minimum element $z$ of $P$, and denote by $P'$ the poset $(P \setminus \{z\},\leq)$. Clearly, we have
\[ \G(P)'=\G(P'). \]
Therefore, applying the peeling process on $\G(P)$ is equivalent to performing on $P$ the process described in the introduction of Section~\ref{SecDef}. It follows that each peeling sequence of $\G(P)$ is a linear extension of $P$.
The converse implication can be easily proved by induction on the cardinality of $P$.
\end{proof}

We have the following immediate corollary.

\begin{corollary}
The poset $(IS(\G(P)),\subseteq)$ is isomorphic to the down-set lattice of $(P,\leq)$.
\end{corollary}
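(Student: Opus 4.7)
The plan is to leverage Proposition~\ref{PropLinearExtens} together with the classical characterization of down-sets of a finite poset as initial sections of linear extensions. Recall that by definition, the down-set lattice of $(P,\leq)$ is the set of all lower sets of $P$ ordered by inclusion. Since the ordering in both posets is inclusion, it suffices to establish the set-theoretic equality $IS(\G(P)) = \{A \subseteq P \ | \ A \text{ is a lower set of } (P,\leq)\}$.

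The first step is the inclusion $IS(\G(P)) \subseteq \{\text{lower sets of } P\}$. Let $A \in IS(\G(P))$, so there exists a peeling sequence $L=[x_1,x_2,\ldots]$ and an index $k$ with $A=\{x_1,\ldots,x_k\}$. By Proposition~\ref{PropLinearExtens}, $L$ is a linear extension of $P$. If $y \leq x_i$ for some $i\leq k$, then $y$ must appear before $x_i$ in $L$ (by definition of a linear extension), hence $y=x_j$ for some $j\leq i \leq k$, so $y \in A$. Thus $A$ is a lower set.

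The second step is the converse inclusion. Let $A$ be a lower set of $P$. Consider the sub-poset $(A,\leq)$: pick any linear extension $[x_1,\ldots,x_k]$ of $(A,\leq)$, and then any linear extension $[x_{k+1},\ldots,x_n]$ of $(P\setminus A,\leq)$. Concatenating yields a sequence $L=[x_1,\ldots,x_n]$; I would verify that $L$ is a linear extension of $P$, using the hypothesis that $A$ is a lower set to rule out relations $y\leq x_i$ with $y\in P\setminus A$ and $x_i\in A$. By Proposition~\ref{PropLinearExtens}, $L$ is a peeling sequence of $\G(P)$, and $A=\{x_1,\ldots,x_k\}$ is one of its initial sections, so $A\in IS(\G(P))$.

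There is no real obstacle here beyond the straightforward verification that concatenating the two linear extensions really produces a linear extension of $P$, which follows directly from $A$ being a lower set. Combining the two inclusions, $IS(\G(P))$ coincides with the collection of lower sets, and the identity map is then trivially an order isomorphism between $(IS(\G(P)),\subseteq)$ and the down-set lattice of $(P,\leq)$.
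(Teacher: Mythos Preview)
Your proposal is correct and is precisely the natural unpacking of what the paper intends: the paper states this as an ``immediate corollary'' of Proposition~\ref{PropLinearExtens} without writing out a proof, and your argument---identifying $IS(\G(P))$ with the set of lower sets of $P$ via the classical fact that lower sets are exactly the initial sections of linear extensions---is the standard way to fill in those details. There is nothing to add.
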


\begin{Remark}
Note that one can obtain the up-set lattice of $(P,\leq)$ by the same method, considering the same digraph $G$ endowed with the valuation $\eta$ defined by
\[ \text{for all} \ z\in P, \ \eta(z) := d^+(z). \]
\end{Remark}

\section{Generalized columns and Quasi-symmetric functions}\label{SecQuasi}

Symmetric functions can be defined as the homogeneous formal power series $F(x_1,x_2,\ldots)$ in countable infinitely many variables being invariant under the action of the symmetric group. That is, for any monomial $X=x_{i_1}\cdots x_{i_k}$ appearing in $F$ and for any simple transposition $s_j \in S_n$, the monomials $X$ and $s_j.X$ have the same coefficient, where $s_j.X$ denote the monomial obtained by permuting the variables $x_j$ and $x_{j+1}$ in $X$. 
Quasi-symmetric functions admits a similar definition. We say that $F(x_1,x_2,\ldots)$ is a \emph{quasi-symmetric function} if and only if for any monomial $X=x_{i_1}\cdots x_{i_k}$ appearing in $F$ and for any simple transposition $s_j$ such that not both $x_j$ and $x_{j+1}$ appear in $X$, then $X$ and $s_j.X$ have the same coefficient in $F$. In particular, symmetric functions are quasi-symmetric functions. 

A useful basis of the space of quasi-symmetric 
functions (of degree $n$) is given by the \emph{fundamental quasi-symmetric functions}, introduced by Gessel in \cite{G84} (see also \cite{S2}~(7.81)).
They are defined as follows: for any $X \subseteq [n-1]$ we set
\[
G\! _X^{ \ \! n}(x_1,x_2,...):= \sum_{\substack{i_1\leq i_2 \leq \ldots \leq i_{n} \\ i_j<i_{j+1} \ {\rm if} \ j\in X}}x_{i_1}\cdots x_{i_n},
\]
that we will generally denote $G\! _X$ when there is no ambiguity.

\subsection{Linear extensions and quasi-symmetric functions}

The first occurrence of quasi-symmetric function goes back to the thesis work of Stanley, via the notion of $P$-partition which generalizes the concept of classical partition of an integer. A $P$-partition is the couple of a finite poset $P$ (with $|P|=n$), together with a given bijection $\gamma$ from $P$ to $[n]$. For any linear extension $L=[z_1,\ldots,z_n]$ of $P$, let ${\rm Des}(L,\gamma)$ be the set of all the indices $j\in [n-1]$ such that $\gamma(j)>\gamma(j+1)$, called the \emph{descent set} of $L$. Stanley associates in \cite{S3} a formal power series with the $P$-partition $(P,\gamma)$ as follows:
\begin{equation}\label{DefGamma}
\Gamma(P,\gamma):= \sum_{L} G_{{\rm Des}(L,\gamma)},
\end{equation}
where the sum is over all linear extensions of $P$. Note that this is a classical reformulation of the original definition, that we give in the following proposition.

\begin{proposition}\label{PropPPart}
A $(P,\gamma)$-partition is a function $f$ from $P$ to $\mathbb{N}^*$ such that there exists a linear extension $L=[z_1,\ldots,z_n]$ of $P$ which satisfies:
\begin{enumerate}
\item for all $1\leq i <j \leq n$, $f(z_i) \leq f(z_j)$;
\item for all $1\leq i<j\leq n$, if $\gamma(z_i) > \gamma(z_j)$, then $f(z_i)<f(z_j)$.
\end{enumerate}
We have $\displaystyle{\Gamma(P,\gamma)= \sum_{f}\prod_{p \in P}x_{f(p)}}$, where the sum is over all $(P,\gamma)$-partitions.
\end{proposition}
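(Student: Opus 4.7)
The plan is to establish the identity via an explicit bijection between $(P,\gamma)$-partitions $f$ and pairs $(L, (i_1, \ldots, i_n))$, where $L = [z_1, \ldots, z_n]$ is a linear extension of $P$ and $(i_1, \ldots, i_n)$ is a weakly increasing sequence of positive integers with $i_j < i_{j+1}$ for every $j \in {\rm Des}(L,\gamma)$, i.e.\ a monomial appearing in $G_{{\rm Des}(L,\gamma)}$. The map will send $(L, (i_1,\ldots,i_n))$ to the function $f$ defined by $f(z_k) := i_k$. Once the bijection is established, summing $\prod_{p\in P} x_{f(p)}$ over all $(P,\gamma)$-partitions matches $\sum_L G_{{\rm Des}(L,\gamma)} = \Gamma(P,\gamma)$ term by term. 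The forward direction is immediate: condition~(1) amounts to $i_1 \leq \cdots \leq i_n$, and for condition~(2), if $\gamma(z_i) > \gamma(z_j)$ with $i<j$, then some index $k$ with $i \leq k < j$ must be a descent of $(L,\gamma)$ (otherwise $\gamma(z_i) < \gamma(z_{i+1}) < \cdots < \gamma(z_j)$, a contradiction), and at this $k$ we have $f(z_i) \leq i_k < i_{k+1} \leq f(z_j)$.

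The main obstacle will be the converse direction: given a $(P,\gamma)$-partition $f$, I need to construct a \emph{unique} linear extension $L_f$ realizing $f$ in the above sense. My candidate $L_f$ will list the elements of $P$ in weakly increasing order of $f$-value, with ties broken by increasing $\gamma$. The delicate step is verifying that $L_f$ is indeed a linear extension of $(P,\leq)$: if $p <_P q$, then any witness linear extension $L'$ of the fact that $f$ is a $(P,\gamma)$-partition must place $p$ before $q$, so condition~(1) gives $f(p) \leq f(q)$, and condition~(2) further upgrades this to $f(p) < f(q)$ whenever $\gamma(p) > \gamma(q)$. Hence either $f(p) < f(q)$, in which case $L_f$ places $p$ before $q$ by $f$-value, or $f(p) = f(q)$, in which case $\gamma(p) < \gamma(q)$ and $L_f$ places $p$ before $q$ by tie-breaking; in both cases $L_f$ respects $\leq$.

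Finally, I will check that $L_f = [z_1,\ldots,z_n]$ together with $(f(z_1),\ldots,f(z_n))$ is a valid pair, and that it is the unique such pair producing $f$. Validity: the sequence $(f(z_1),\ldots,f(z_n))$ is weakly increasing by construction, and a descent $\gamma(z_j) > \gamma(z_{j+1})$ at position $j$ rules out the tie-breaking scenario, so $f(z_j) < f(z_{j+1})$ is automatically forced at each $j \in {\rm Des}(L_f,\gamma)$. Uniqueness: any linear extension $L$ for which $(f(z_1),\ldots,f(z_n))$ is weakly increasing and strict at descents must list elements with distinct $f$-values in $f$-increasing order (forced by weak monotonicity, recalling that $\gamma$ is a bijection so distinct elements have distinct $\gamma$) and tied elements in $\gamma$-increasing order (since otherwise a descent appears within a tied block, violating strictness). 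This forces $L = L_f$, which completes the bijection and hence the proof.
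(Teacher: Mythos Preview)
Your argument is correct and is precisely the standard proof of this classical fact (due to Stanley). Note, however, that the paper does not actually supply a proof of this proposition: it is stated as a ``classical reformulation of the original definition'' and left unproved, with an implicit reference to \cite{S3}. So there is no paper proof to compare against; your bijection via the canonical linear extension $L_f$ (sort by $f$-value, break ties by increasing $\gamma$) is exactly the argument one finds in Stanley's treatment of $P$-partitions, and every step you wrote checks out.
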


\subsection{Definition of the formal power series}

Thanks to Section~\ref{SectionDownSet}, we have that the notion of peeling sequence is a generalization of linear extension of a finite poset to a valued digraph. Thus, it is natural to look for a generalization of \eqref{DefGamma} to the case of valued digraphs. This is the point of this section. We begin with introducing a useful notation.

\begin{definition}
Let $\G=(G,\theta)$ be a valued digraph. For all $A \in (IS(\G),\subseteq)$ we denote by $PS(A)$ the set defined by 
\begin{equation*}
PS_A(\G):=\left\{[z_1,\ldots,z_{|A|}] \ 
\left| 
\begin{aligned}
& A=\{z_1,\ldots,z_{|A|}\} \\
& \exists [x_1,x_2,\ldots]\in PS(\G) \ \text{such that} \ x_i=z_i \ \text{for all} \ i\leq |A|
\end{aligned}
\right.
\right\}.
\end{equation*}
\end{definition}

A straightforward way to generalize the series $\Gamma(P,\gamma)$ would be to consider a bijection $\mu$ from the vertices of $\G$ to $\{1,\ldots,|V|\}$, and directly adapt \eqref{DefGamma} to this new context. However, in the sequel we will need a slightly more general definition, which is inspired by the \emph{column-strictness conditions} introduced in \cite{FGR} and \cite{YY}.

\begin{definition}
A set of \emph{generalized columns} of $\G$ is a family $\U=(\U_z)_{z\in V}$ of subsets of $V$. Let $A \in IS(\G)$, $\U$ be a set of generalized columns and $f$ be a function from $A$ to $\mathbb{N}$. We say that $f$ is a \emph{$(A,\U)$-semi-standard function} if and only if there exists $L=[z_1,\ldots,z_n]\in PS_A(\G)$ such that:
\begin{enumerate}
\item for all $1\leq i <j \leq n$, we have $f(z_i) \leq f(z_j)$;
\item for all $1\leq i<j\leq n$, if $z_j \in \U_{z_i}$, then $f(z_i)<f(z_j)$.
\end{enumerate}
Such a peeling sequence is called a \emph{$f$-compatible peeling sequence} of $A$.
We denote by $\SSF(A,\U)$ the set of all the $(A,\U)$-semi-standard functions (when there is no ambiguity, we will simply denote it by $\SSF(A)$). Finally, we define the formal power series
\[
\Gamma(A,\U):=\sum_{f \in \SSF(A)}\prod_{z \in A} x_{f(z)}.
\]
\end{definition}

\begin{proposition}
The series $\displaystyle{\Gamma(A,\U)}$ is a quasi-symmetric function.
\end{proposition}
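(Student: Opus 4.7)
The plan is to use the characterization of quasi-symmetry from the first paragraph of Section \ref{SecQuasi}: it suffices to show that for every monomial $X$ appearing in $\Gamma(A,\U)$ and every simple transposition $s_j$ such that $x_j$ and $x_{j+1}$ do not both appear in $X$, the coefficients of $X$ and $s_j.X$ in $\Gamma(A,\U)$ coincide. Note first that the coefficient of a monomial $M$ in $\Gamma(A,\U)$ is exactly the cardinality of $\SSF_M(A):=\{f \in \SSF(A) \ | \ \prod_{z \in A} x_{f(z)}=M\}$, so the task reduces to exhibiting an explicit bijection between $\SSF_X(A)$ and $\SSF_{s_j.X}(A)$.

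The bijection I would build is the most naive one. Fix $j \in \mathbb{N}^*$, and assume without loss of generality that $x_{j+1}$ does not appear in $X$ (the other case is symmetric, and the case where neither $x_j$ nor $x_{j+1}$ appears is trivial since then $X=s_j.X$). Given $f \in \SSF_X(A)$, define $\phi_j(f):A \rightarrow \mathbb{N}$ by $\phi_j(f)(z):=j+1$ if $f(z)=j$, and $\phi_j(f)(z):=f(z)$ otherwise. Clearly $\prod_{z\in A} x_{\phi_j(f)(z)}=s_j.X$, and $\phi_j$ is invertible: its inverse is the map which sends $g\in \SSF_{s_j.X}(A)$ to the function obtained by replacing every value $j+1$ by $j$ (this is well-defined because, by the hypothesis on $X$, the value $j+1$ does not occur as a value of $f$, so after swapping no vertex retains value $j+1$ unintentionally; symmetrically in the other direction).

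The only non-trivial step is showing that $\phi_j(f)$ is itself $(A,\U)$-semi-standard, which I would do by verifying that any $f$-compatible peeling sequence $L=[z_1,\ldots,z_n]$ of $A$ is also $\phi_j(f)$-compatible. For condition $(1)$ of the definition: if $1\leq i<k\leq n$ and $f(z_i)\leq f(z_k)$, then since $j+1\notin f(A)$ we cannot have $f(z_i)=j$ and $f(z_k)=j+1$ simultaneously, and one checks that in every remaining case $\phi_j(f)(z_i)\leq \phi_j(f)(z_k)$. For condition $(2)$: if $z_k \in \U_{z_i}$ and $f(z_i)<f(z_k)$, then again by the absence of $j+1$ in $f(A)$ one cannot have $(f(z_i),f(z_k))=(j,j+1)$, so $\phi_j(f)(z_i)<\phi_j(f)(z_k)$ still holds. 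The analogous verification for the inverse map is identical.

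The main obstacle is thus entirely concentrated in the verification that swapping the values $j$ and $j+1$ preserves both the weak-monotonicity condition (1) and the strict-increase condition (2) along the peeling sequence $L$. The key observation that makes it work is the hypothesis that $j$ and $j+1$ do not both appear as values of $f$: without it, a step in $L$ where consecutive vertices take the values $j$ and $j+1$ could be reversed by the swap, breaking condition (1). Once this is in place, the bijection $\phi_j$ between $\SSF_X(A)$ and $\SSF_{s_j.X}(A)$ is immediate and proves that $\Gamma(A,\U)$ is quasi-symmetric.
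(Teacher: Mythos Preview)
Your proof is correct and follows essentially the same approach as the paper: both define the map that replaces the value $j$ by $j+1$ (when $j+1$ is not already a value of $f$) and verify that any $f$-compatible peeling sequence remains compatible for the new function. Your write-up is slightly more explicit about casting this as a bijection between $\SSF_X(A)$ and $\SSF_{s_j.X}(A)$, whereas the paper only exhibits the map in one direction and notes that this suffices, but the underlying argument is the same.
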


\begin{proof}
Let $n=|A|$, $i \in \mathbb{N}^*$ and $f \in \SSF(A)$ such that $f^{-1}(\{ i \}) \neq \emptyset$ and $f^{-1}(\{ i+1 \})= \emptyset$. Let $L=[z_1,\ldots,z_n]$ be a $f$-compatible peeling sequence of $A$ and denote by $\widehat{f}$ the function from $A$ to $\mathbb{N}^*$ defined by 
\begin{equation*}
\text{for all} \ z \in A, \ \widehat{f}(z)= \left\{
\begin{aligned}
&i+1 \ \text{if} \ z \in f^{-1}(\{ i \}), \\
&f(z) \ \text{otherwise}.
\end{aligned}
\right.
\end{equation*}
We will prove that $L$ is a $\widehat{f}$-compatible sequence.
Since $f \in \SSF(A)$, then $\widehat{f}$ is weakly increasing along $[z_1,\ldots,z_n]$. Assume by contradiction that there exists $p<q$ such that $z_q \in \U_{z_p}$ and $\widehat{f}(z_p)=\widehat{f}(z_q)$.
Then, we have $f(z_p)=f(z_q)$ and this contradicts the fact that $L$ is $f$-compatible. Therefore, $L$ is $\widehat{f}$-compatible, hence $\widehat{f}$ is an $(A,\U)$-semi-standard function, and this is enough to prove that $\displaystyle{\Gamma(A,\U)}$ is quasi-symmetric. This concludes the proof.
\end{proof}

It appears that $\Gamma(A,\U)$ is a generalization of the function associated with a $P$ partition, thanks to the following immediate proposition.

\begin{proposition}
Let $(P,\leq)$ be a finite poset, $(P,\gamma)$ be a $P$-partition and $\G(P)$ be the valued digraph defined in Section~\ref{SectionDownSet}. We set $\U_z(\gamma):=\{y \in P \ | \ \gamma(z)>\gamma(y) \}$ and $\U(\gamma)=(\U_z(\gamma))_{z\in P}$ a set of generalized columns of $\G(P)$. Then
\[ 
\Gamma(P,\U(\gamma))=\Gamma(P,\gamma).
\]
\end{proposition}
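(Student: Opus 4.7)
The plan is to show that the two formal power series are equal by proving that they are summed over the same family of functions. Indeed, once one unfolds both definitions, the equality is almost tautological: the point is to exhibit a perfect matching between $(P,\gamma)$-partitions in the sense of Proposition~\ref{PropPPart} and $(P,\U(\gamma))$-semi-standard functions on $A=P$ (with the identification provided by Proposition~\ref{PropLinearExtens}).

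First, I would observe that since $A=P$, the set $PS_P(\G(P))$ coincides with $PS(\G(P))$, and by Proposition~\ref{PropLinearExtens} this is exactly the set of linear extensions of $(P,\leq)$. Hence in both formal series the underlying choice of an ordering $L=[z_1,\ldots,z_n]$ ranges over the same set, namely the linear extensions of $P$.

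Next, I would unpack the compatibility conditions side by side. A function $f : P \to \mathbb{N}^*$ is a $(P,\gamma)$-partition if there exists a linear extension $L=[z_1,\ldots,z_n]$ such that $f(z_i) \leq f(z_j)$ for all $i<j$, and such that $f(z_i) < f(z_j)$ whenever $i<j$ and $\gamma(z_i) > \gamma(z_j)$. On the other hand, by definition of $\U_z(\gamma)=\{y \in P \mid \gamma(z)>\gamma(y)\}$, the condition ``$z_j \in \U_{z_i}(\gamma)$'' is literally the condition ``$\gamma(z_i) > \gamma(z_j)$.'' Hence the requirements that $L$ be $f$-compatible in the sense of the generalized-columns definition coincide term by term with the requirements that $L$ witnesses $f$ as a $(P,\gamma)$-partition. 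This yields the equality $\SSF(P,\U(\gamma)) = \{f : f \text{ is a } (P,\gamma)\text{-partition}\}$.

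Finally, I would conclude by comparing monomials: by definition $\Gamma(P,\U(\gamma)) = \sum_{f \in \SSF(P,\U(\gamma))} \prod_{z\in P} x_{f(z)}$, while Proposition~\ref{PropPPart} states that $\Gamma(P,\gamma) = \sum_{f} \prod_{p\in P} x_{f(p)}$ where the sum runs over the $(P,\gamma)$-partitions. The previous step identifies the two index sets, so the two series coincide. There is no substantial obstacle here; the only point worth being careful about is to make sure that taking $A=P$ really gives all linear extensions (not a strict subset), which is exactly the content of Proposition~\ref{PropLinearExtens}.
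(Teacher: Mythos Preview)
Your proof is correct and matches the paper's treatment: the paper calls this proposition ``immediate'' and gives no proof, precisely because, as you show, once one identifies $PS_P(\G(P))$ with the set of linear extensions via Proposition~\ref{PropLinearExtens}, the defining conditions for a $(P,\U(\gamma))$-semi-standard function and for a $(P,\gamma)$-partition coincide verbatim. Your write-up is exactly the unfolding that the word ``immediate'' is standing in for.
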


\begin{Question}
As suggested before, we could have defined $\Gamma(A,\U)$ associating a descent set to each element of $PS_A(\G)$ in the obvious way and then summing all the associated fundamental quasi-symmetric functions. However, it seems not to be any particular reason to expect these two definitions to coincide in general. It should be interesting to investigate if there exists some valued digraphs with a choice of generalized columns for which this equality occurs.
\end{Question}

We finish this section with an obvious lemma connecting this function to the enumeration of maximal chains in $(IS(\G),\subseteq)$.

\begin{lemma}
Let $\G$ be a valued digraph, $\U$ be a set of generalized columns of $\G$ and $A$ be an element of $IS(\G)$. Then, the coefficient of $x_1x_2\cdots x_n$ in $\Gamma(A,\U)$ is equal to the number of maximal chain from $\emptyset$ to $A$ in $(IS(\G),\subseteq)$.
\end{lemma}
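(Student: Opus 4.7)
The plan is to unpack the definitions and observe that the coefficient in question counts bijections $f : A \to \{1,\dots,n\}$ (where $n = |A|$) that lie in $\SSF(A,\U)$, and that such bijections are in canonical correspondence with elements of $PS_A(\G)$, hence (by Proposition~\ref{PropGradedPoset}) with maximal chains from $\emptyset$ to $A$.

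First I would note that a monomial $\prod_{z\in A} x_{f(z)}$ equals $x_1 x_2 \cdots x_n$ exactly when $f$ is a bijection from $A$ to $\{1,\dots,n\}$. So the coefficient of $x_1 x_2 \cdots x_n$ in $\Gamma(A,\U)$ counts the bijections $f : A \to \{1,\dots,n\}$ that belong to $\SSF(A,\U)$.

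Next I would show that for such a bijection $f$, there is at most one $f$-compatible peeling sequence, and it is forced: if $L = [z_1,\dots,z_n] \in PS_A(\G)$ is $f$-compatible, then condition (1) requires $f(z_1) \leq f(z_2) \leq \cdots \leq f(z_n)$, and since $f$ takes the values $1,\dots,n$ each exactly once, this forces $f(z_i) = i$, i.e.\ $z_i = f^{-1}(i)$. Conversely, given any $L = [z_1,\dots,z_n] \in PS_A(\G)$, define $f(z_i) := i$; then condition (1) holds strictly, and condition (2) is automatic because $z_j \in \U_{z_i}$ with $i < j$ gives $f(z_i) = i < j = f(z_j)$ without any further constraint. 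Hence the assignment $L \mapsto (z_i \mapsto i)$ is a bijection between $PS_A(\G)$ and the set of bijective elements of $\SSF(A,\U)$ counted by our coefficient.

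Finally I would invoke Proposition~\ref{PropGradedPoset}: a peeling sequence $L = [z_1,\dots,z_n] \in PS_A(\G)$ yields the maximal chain
\[
\emptyset \subsetneq \{z_1\} \subsetneq \{z_1,z_2\} \subsetneq \cdots \subsetneq \{z_1,\dots,z_n\} = A
\]
in $(IS(\G),\subseteq)$, and conversely any maximal chain $\emptyset = A_0 \subsetneq A_1 \subsetneq \cdots \subsetneq A_n = A$ (which by gradedness has length $n = |A|$) determines a unique sequence of added vertices $z_i$ with $A_i = \{z_1,\dots,z_i\}$, and this sequence lies in $PS_A(\G)$ because each $A_i$ is in $IS(\G)$ so that $z_i$ is erasable in the valued digraph obtained after peeling $A_{i-1}$. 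Chaining the two bijections together gives the stated equality. There is no real obstacle here; the only point to take care of is the automatic verification of the column-strictness condition~(2), which is immediate once one observes that $f(z_i) = i < j = f(z_j)$ is a strict inequality.
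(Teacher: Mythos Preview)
Your proof is correct and is precisely the unpacking of what the paper dismisses in one line as ``clear by definition of $\SSF(A)$'': you make explicit the bijection between $PS_A(\G)$ and bijective $(A,\U)$-semi-standard functions (noting that the column condition~(2) is vacuous when $f$ is injective), and then the bijection between $PS_A(\G)$ and maximal chains from $\emptyset$ to $A$ via gradedness. This is the same approach as the paper, just with the details spelled out.
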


\begin{proof}
This is clear by definition of $\SSF(A)$.
\end{proof}

\subsection{Type $A$ and Stanley's symmetric function}

In this section, we consider the valued digraph $\A=(G,\theta)$ associated with the weak order on $A_{n-1}$ (see Section~\ref{SectSym}). Since $\A$ can be seen as the Ferrers diagram of the partition $\lambda_n$, we have a natural choice for a set of generalized columns, given by the columns of $\A$.

\begin{definition}
The set of generalized columns $\U^{col}=(\U_{(a,b)})_{1\leq a<b \leq n}$ of $\A$ is defined by: 
\[
 \U_{(a,b)}:=\{(a,k) \ | \ a< k \leq n \}.
\]
\end{definition}

Surprisingly, the series which arise from this choice of generalized columns are the well-known Stanley symmetric functions of type $A$ (see \cite{S1}). Let us first recall the definition of Stanley symmetric functions.

\begin{definition}
Let $\sigma \in S_n$, the \emph{Stanley symmetric function} associated with $\sigma$ is the formal power series defined by:
$$F_{\sigma}(x_1,x_2,\ldots):=\sum_{(i_1,\ldots,i_{\ell(\sigma)}) \in {\rm Red}(\sigma)} \ \sum_{\substack{r_1\leq r_2 \leq \ldots \leq r_{\ell(\sigma)} \\ r_j<r_{j+1} \ {\rm if} \ i_j<i_{j+1}}} \ x_{r_1}x_{r_2}\cdots x_{r_{\ell(\sigma)}},$$ where ${\rm Red}(\sigma)$ denote the set of the reduced decompositions of $\sigma$.
\end{definition} 

In \cite{FGR}, the authors give a characterization of such function indexed by a permutation $\sigma \in S_n$, in terms of sums over a set of tableaux called \emph{balanced labellings} of the \emph{Rothe diagram} of $\sigma$. We will prove that the series arising from our description are exactly the Stanley symmetric functions. we follow the same method as them. Note that another method consists in constructing an explicit bijection between these balanced labellings and the elements of $\SSF(\Inv(\sigma),\U^{col})$, but we will not detail this here.

\begin{Theorem}\label{TheoStanSymMoi}
For all $\sigma \in S_n$, we have: \[\Gamma(\Inv(\sigma),\U^{col})=F_{\sigma}.\]
\end{Theorem}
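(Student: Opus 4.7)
The strategy follows the framework of Fomin--Greene--Reiner~\cite{FGR}: I will construct an explicit bijection $\Phi$ between $\SSF(\Inv(\sigma),\U^{col})$ and the indexing set of $F_\sigma$, namely the set of pairs $(w,r)$ with $w=(i_1,\ldots,i_N)$ a reduced word for $\sigma$ and $r=(r_1,\ldots,r_N)$ a sequence satisfying $r_1\le\cdots\le r_N$ and $r_k<r_{k+1}$ whenever $i_k<i_{k+1}$; here $N=\ell(\sigma)=|\Inv(\sigma)|$. By Corollary~\ref{CorolFinalA} and Proposition~\ref{TechSym}, each peeling sequence $L=[z_1,\ldots,z_N]$ of $\Inv(\sigma)$ canonically produces a reduced word $w(L)=(i_1,\ldots,i_N)$: at step $k$ the box $z_k=(a_k,b_k)$ is peeled by swapping the two adjacent entries $a_k,b_k$ in the current permutation, and $i_k$ is the position of $a_k$. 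The first step is to establish the pivotal lemma that $z_{k+1}\in\U^{col}_{z_k}$ if and only if $i_{k+1}=i_k+1$: after step $k$ the value $a_k$ sits at position $i_k+1$, so $a_{k+1}=a_k$ (equivalently $z_{k+1}\in\U^{col}_{z_k}$) occurs exactly when the next adjacent swap happens at position $i_k+1$, and a direct case analysis on $i_{k+1}-i_k$ rules out all other possibilities.

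For the forward direction of $\Phi$, associate to a Stanley pair $(w,r)$ the function $f=f_{w,r}$ on $\Inv(\sigma)$ defined by $f(z_k^{L_w})=r_k$, where $L_w$ is the peeling sequence corresponding to $w$. I would verify $f\in\SSF(\Inv(\sigma),\U^{col})$ by checking $L_w$-compatibility. The weakly increasing condition along $L_w$ is exactly $r_1\le\cdots\le r_N$. For the strict inequality $f(z_i)<f(z_j)$ whenever $z_j\in\U^{col}_{z_i}$ with $i<j$, write $z_i=(a,b_i),z_j=(a,b_j)$. The value $a$ sits at position $i_i+1$ right after step $i$, can only move rightward in subsequent peelings (each intervening swap involving $a$ creates a new inversion with a larger value on $a$'s right), and is at position $i_j$ at the start of step $j$, so $i_j\ge i_i+1>i_i$. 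Since reduced words forbid $i_k=i_{k+1}$, the finite sequence $(i_i,i_{i+1},\ldots,i_j)$ cannot be strictly decreasing and must contain at least one ascent $i_k<i_{k+1}$ with $i\le k<j$; Stanley's strict inequality at that position, combined with the overall weak monotonicity, produces $r_i<r_j$.

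For bijectivity, the key observation is that for any $f\in\SSF$ and any $f$-compatible $L$, the tuple $(f(z_1^L),\ldots,f(z_N^L))$ is the weakly increasing listing of the multiset $\{f(z):z\in\Inv(\sigma)\}$ and so depends only on $f$; injectivity of $\Phi$ is therefore reduced to uniqueness of the underlying reduced word. The main obstacle is to show that for each $f\in\SSF$ there is a unique $f$-compatible peeling sequence $L_f$ whose image $(w(L_f),r)$ is a Stanley pair. Within a level set $f^{-1}(v)$, Stanley's condition forces every consecutive pair in $w(L_f)$ to be a descent, since an ascent combined with $r_k=r_{k+1}$ would violate strict increase. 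Because two boxes of a common level necessarily live in distinct rows of $\lambda_n$ (row-strictness of $f$ along any peeling), the pivotal lemma forces the corresponding transpositions $s_{i^{(s)}}$ within a level to satisfy $|i^{(s)}-i^{(t)}|\ge 2$; they therefore pairwise commute, the $i$-values attached to the boxes of a single level are intrinsic (order-independent within the level), and among the possible orderings precisely one---namely decreasing $i$-order---consists entirely of descents. Constructing $L_f$ by this canonical rule level by level gives the inverse of $\Phi$, and matching monomials $\prod_{z\in\Inv(\sigma)}x_{f(z)}=x_{r_1}\cdots x_{r_N}$ then yields $\Gamma(\Inv(\sigma),\U^{col})=F_\sigma$.
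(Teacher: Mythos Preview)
Your overall architecture matches the paper's---build a bijection between $\SSF(\Inv(\sigma),\U^{col})$ and Stanley pairs $(w,r)$---but two of your key steps contain genuine errors.

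\textbf{Forward direction.} Your claim that the value $a$ ``can only move rightward in subsequent peelings'' is false: a step peeling a box $(c,a)$ with $c<a$ moves $a$ one position to the \emph{left}. Consequently the inequality $i_j\ge i_i+1$ does not hold in general. For a concrete counterexample take $n=4$, the reduced word $w=(2,1,2,3,2)$ for $[3,4,2,1]$, and $r=(1,1,2,3,3)$. The associated peeling is $[(2,3),(1,3),(1,2),(1,4),(2,4)]$, so $z_1=(2,3)$ and $z_5=(2,4)$ lie in the same column, yet $i_1=i_5=2$. The conclusion you need (that the segment $(i_i,\ldots,i_j)$ contains an ascent) is still true, but it requires a different argument: assume the segment is strictly decreasing and show directly that every swap in that stretch occurs weakly to the left of $a$'s current position, so $a$ can never again appear as the left entry of a swap---contradicting $a_j=a$. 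This is what the paper does.

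\textbf{Inverse direction.} Your commutation claim is also false. The pivotal lemma only tells you that \emph{consecutive} same-column boxes force $i_{k+1}=i_k+1$; it does \emph{not} say that distinct-column boxes force $|i_{k+1}-i_k|\ge 2$. Indeed $i_{k+1}=i_k-1$ is perfectly compatible with $a_{k+1}\neq a_k$. Take $\sigma=[4,1,2,3]$ with $f\equiv 1$: the three boxes $(1,4),(2,4),(3,4)$ lie in distinct columns, but the unique peeling gives $i$-values $3,2,1$, and $s_3,s_2,s_1$ certainly do not pairwise commute. So you cannot argue that the $i$-values within a level are order-independent via commutativity, nor that any reordering remains a valid peeling. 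What is true is that the \emph{permutation} reached after each level is determined by $f$ alone; the problem is to select, among the reduced words for each level's contribution, one that is strictly decreasing, and to show this choice exists and is unique. The paper handles this by a backward recursion on a ``leading cell'' (the box with maximal $f$-value and, among those, minimal $\sigma^{-1}(a)$), proving at each stage that removing it leaves an inversion set.
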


Before giving the proof, we need the following technical lemma.

\begin{definition}
Let $\sigma \in S_n$, $f \in \SSF(\Inv(\sigma),\U^{col})$ and $M={\rm max}\{f(c) \ | \ c\in \Inv(\sigma) \}$. The \emph{leading cell} of $f$ is the unique element $(a,b) \in \Inv(\sigma)$ such that:
\begin{enumerate}
\item $f(a,b)=M$;
\item the integer $\sigma^{-1}(a)$ is minimal such that (1) is true.
\end{enumerate} 
\end{definition}

\begin{lemma}\label{TechStanSym}
Let $\sigma \in S_n$, $f\in \SSF(\Inv(\sigma))$ and $(a,b)$ be the leading cell of $f$. Then, there exists $\omega \in S_n$ such that $\Inv(\omega)=\Inv(\sigma)\setminus \{(a,b)\}$.
\end{lemma}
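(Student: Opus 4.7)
My plan is to reduce the lemma to showing that $a$ and $b$ are adjacent in $\sigma$ with $b$ immediately preceding $a$: once this is established, $\omega := \sigma \cdot s_{\sigma^{-1}(b)}$ has inversion set $\Inv(\sigma) \setminus \{(a,b)\}$, which is exactly what we want. The reduction relies on Corollary~\ref{CorolFinalA}, which says it suffices to exhibit $\Inv(\sigma) \setminus \{(a,b)\}$ as an element of $IS(\A)$; on the gradedness of $(IS(\A),\subseteq)$ (Proposition~\ref{PropGradedPoset}), which rephrases this as the existence of a peeling sequence of $\Inv(\sigma)$ whose last element is $(a,b)$; and on Proposition~\ref{TechSym}, which identifies the last cell of such a peeling sequence with an adjacent pair of entries of $\sigma$. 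Thus the whole content of the lemma is that the leading cell is a descent of $\sigma$.

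I plan to prove this descent property by contradiction. Fix an $f$-compatible peeling sequence $L = [z_1, \ldots, z_N]$, write $(a,b) = z_k$, and set $M = f(a,b)$. From the weak monotonicity of $f$ along $L$ and the column-strict condition, every cell $z_j$ with $j \geq k$ satisfies $f(z_j) = M$, and these ``top-level'' cells have pairwise distinct first coordinates; in particular $(a,b)$ is the last cell of its column to appear in $L$. The leading-cell property then reads: among the first coordinates of top-level cells, $\sigma^{-1}(a)$ is minimal.

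Suppose $b$ is not immediately before $a$ in $\sigma$, and choose $c$ with $\sigma^{-1}(c) = \sigma^{-1}(a)-1$, so that $c \neq b$ and $\sigma^{-1}(c) > \sigma^{-1}(b)$. I will split into cases according to the order of $a$, $b$, $c$. If $a < c$, then $(a,c) \in \Inv(\sigma)$ lies in the column of $(a,b)$ and, using the arc from $(a,c)$ to $(a,b)$ (when $c > b$) or from $(a,b)$ to $(a,c)$ together with the successor $(c,b) \in \Inv(\sigma)$ (when $a < c < b$), I will derive a violation of the balance condition of Proposition~\ref{Technic} at $(a,b)$ by tracking arc-counts via Lemma~\ref{LemSymTech}. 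If $c < a$, the cell $(c,b) \in \Inv(\sigma)$ is a predecessor of $(a,b)$; combined with the leading-cell constraint $f(c,b) < M$ (forced by $\sigma^{-1}(c) < \sigma^{-1}(a)$) and the peeling constraints on column $c$, this should produce a top-level cell with first coordinate strictly to the left of $a$ in $\sigma$, contradicting the minimality of $\sigma^{-1}(a)$.

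The main obstacle I anticipate is the case $c < a$: here the threatening cell $(c,b)$ is not in the column of $(a,b)$, so column-strictness gives nothing directly, and the contradiction must be extracted by chasing arcs through $\A$ to find the offending top-level cell. A clean quantitative route may be to prove the equivalence, via Lemma~\ref{LemSymTech}, between the removability condition ``$\theta(a,b) = |\{y \in \Inv(\sigma) : ((a,b),y)\in E\}|$'' of Proposition~\ref{Technic} and the absence of $k$ with $a < k < b$ and $\sigma^{-1}(b) < \sigma^{-1}(k) < \sigma^{-1}(a)$; then separately exclude the outer cases $c < a$ and $c > b$ using the leading-cell argument described above. Putting the two halves together should yield the desired adjacency.
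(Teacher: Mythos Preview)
Your reduction to ``$b$ immediately precedes $a$ in $\sigma$'' is correct, and your observation that the top-level cells (those $z_j$ with $f(z_j)=M$) have pairwise distinct first coordinates is exactly the first half of the paper's argument: in the paper's language it says $a_p\neq a$ for every $p>k$.

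The gap is in the contradiction you sketch. In the case $a<c$, invoking ``the balance condition of Proposition~\ref{Technic} at $(a,b)$'' cannot yield a contradiction: those inequalities hold because $\Inv(\sigma)\in IS(\A)$, so there is nothing to violate, and Lemma~\ref{LemSymTech} does not apply since $(a,b)\in\Inv(\sigma)$. In the case $c<a$ you correctly guess that one must exhibit a top-level cell whose first coordinate lies to the left of $a$ in $\sigma$, but you do not produce it.

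The paper avoids the case analysis on $c$ altogether. After establishing $a_p\neq a$ for $p>k$, it proves the companion fact $b_p\neq b$ for $p>k$: if some $b_p=b$ then $(a_p,b)$ is a top-level cell, and no later cell can equal $(a_p,a)$ (it would share column $a_p$ with $(a_p,b)$ and also have $f$-value $M$), so $a_p$ stays to the left of $a$ all the way to $\sigma$; hence $\sigma^{-1}(a_p)<\sigma^{-1}(a)$ while $f(a_p,b)=M$, contradicting the minimality that defines the leading cell. With both $a_p\neq a$ and $b_p\neq b$ for $p>k$, neither $a$ nor $b$ is touched after step $k$, so the adjacency $[\ldots,b,a,\ldots]$ present in $\sigma_k$ persists to $\sigma$, and the lemma follows. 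This is essentially your case-2 instinct applied at the right spot; the detour through the element $c$ sitting immediately left of $a$ in $\sigma$ is unnecessary and, as written, does not close.
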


\begin{proof}
Let $M=f(a,b)$ and $L=[(a_1,b_1),\ldots,(a_{\ell(\sigma)},b_{\ell(\sigma)})]\in PS_{\Inv(\sigma)}(\A)$ be a $f$-compatible peeling sequence. 
Thanks to Corollary~\ref{CorolFinalA}, there exists $\sigma_1,\sigma_2,\ldots,\sigma_{\ell(\sigma)} \in S_n$ such that:
\begin{enumerate}
\item $Id \lhd_R \sigma_1 \lhd_R \ldots \lhd_R \sigma_{\ell(\sigma)}=\sigma$;
\item $\Inv(\sigma_i)=\{(a_1,b_1),\ldots,(a_i,b_i) \}$.
\end{enumerate}
There exists $k$ such that $(a_k,b_k)=(a,b)$, hence $a$ and $b$ are adjacent in $\sigma_{k-1}$ and $\sigma_k$ is obtained from $\sigma_{k-1}$ by swapping the positions of $a$ and $b$. Let us assume that the position of $a$ and $b$ in $\sigma_k$ is preserved in $\sigma$, that is we have
\begin{equation}\label{EqTechProofQuasiSym1}
\sigma_{k}=[\sigma_k(1),\ldots,b,a,\ldots,\sigma_k(n)] \ \ \ \text{and} \ \ \ \sigma=[\sigma(1),\ldots,b,a,\ldots,\sigma(n)].
\end{equation}
In that case, the permutation $\omega$ obtained from $\sigma$ by swapping the positions of $b$ and $a$ satisfy $\Inv(\omega)=\Inv(\sigma)\setminus \{(a,b)\}$, which is exactly the expected result. We still have to prove that \eqref{EqTechProofQuasiSym1} is true. 
For that purpose, we will show that $a_p \neq a$ and $b_p \neq b$ for all $p>k$, implying that the positions of $a$ an $b$ stay the same in $\sigma_k, \sigma_{k+1},\ldots,\sigma_{\ell(\sigma)}=\sigma$.

 Assume by contradiction that there exists $p>k$ such that $a_p=a$. Then, we have $f(a,b_p)=M=f(a,b)$ because $f$ is weakly increasing along $L$. But $(a,b_p)$ and $(a,b)$ are in the same column of $\lambda_n$, and this is absurd by definition of $\U^{col}$. Similarly, assume by contradiction that there exists $p>k$ such that $b_p=b$. Then, for all $q>p$ we have $(a_q,b_q) \neq (a_p,a)$ (otherwise $(a_p,b_p)$ and $(a_q,b_q)$ would be both in the column $a_p$, but it is impossible since $f$ takes the value $M$ on both of them), hence we have
\[ \sigma=[\sigma(1),\ldots,a_p,\ldots,a,\ldots,\sigma(n)]. \]
Thus, we have $\sigma^{-1}(a_p)<\sigma^{-1}(a)$, and this contradicts the minimality of $\sigma^{-1}(a)$. 

Consequently, \ref{EqTechProofQuasiSym1} is true, and this concludes the proof.
\end{proof}

We now have everything we need to prove Theorem~\ref{TheoStanSymMoi}.

\begin{proof}[Proof of Theorem~\ref{TheoStanSymMoi}]
We will define a bijection $\Psi$ associating a $(\Inv(\sigma),\U^{col})$-semi-standard function $f$ with a pair of sequences, denoted by
\begin{equation}\label{DefBijStanTypA}
\Psi(f)=( \ [i_1,\ldots,i_k] \ , \ [r_1,\ldots,r_k] \ ),
\end{equation}
such that 
\begin{enumerate}
\item $s_{i_1}\cdots s_{i_k}$ is a reduced decomposition of $\sigma$;
\item $(r_j)_j$ is weakly increasing and $r_{j}<r_{j+1}$ whenever $i_j<i_{j+1}$;
\item $\displaystyle{\prod_{1 \leq j \leq k}x_{r_j}=\prod_{z \in \Inv(\sigma)}f(z)}$.
\end{enumerate} 
Clearly, if such a bijection exists, then \eqref{EqObjectifSym} is true.
We split our proof into two step: first, we will define a function satisfying all the required conditions; then, we will prove that it is bijection, constructing its reverse function. 

\textbf{Step 1: Definition of $\Psi$.} Let $f \in \SSF(\Inv(\sigma))$, we define by backward induction a pair of sequences $[i_1,\ldots,i_{\ell(\sigma)}]$ and $[r_1 \leq \ldots \leq r_{\ell(\sigma)}]$ using Lemma~\ref{TechStanSym} as follows:
\begin{itemize}
\item let $(a,b) \in \Inv(\sigma)$ be the leading cell of $f$;
\item let $\omega \in S_n$ be such that $\Inv(\omega)=\Inv(\sigma) \setminus \{(a,b)\}$ and set $i_{\ell(\sigma)}$ and $r_{\ell(\sigma)}$ the two integers such that $\sigma=\omega s_{i_{\ell(\sigma)}}$ and $r_{\ell(\sigma)}=f(a,b)$;
\item repeat this procedure swapping $\sigma$ with $\omega$ and $f$ with $g:=f|_{\Inv(\omega)}$, and so on.
\end{itemize}
Let us now check that this pair of sequences satisfies conditions (1), (2) and (3). Clearly, we have $r_1 \leq r_2 \leq \ldots \leq r_{\ell(\sigma)}$, and thanks to Lemma~\ref{TechStanSym} $s_{i_1}\cdots s_{i_{\ell(\sigma)}}$ is in ${\rm Red}(\sigma)$. We still have to prove that $r_j<r_{j+1}$ whenever $i_j<i_{j+1}$. We will prove the contrapositive: let $j$ be such that $r_j=r_{j+1}$, and denote by $a$ and $b$ the two integers such that 
\begin{align*}
s_{i_1}\cdots s_{i_{j-1}}&=[\ldots,a,b,\ldots], \\
s_{i_1}\cdots s_{i_{j}}&=[\ldots,b,a,\ldots].
\end{align*} 
Our aim is now to prove that $i_{j+1}$ is strictly smaller than $i_j$. Assume by contradiction that $i_j \leq i_{j+1}$, and consider the three following cases.
\begin{itemize}
\item If $i_j=i_{j+1}$, then $s_{i_1}\cdots s_{i_{\ell(\sigma)}}$ is not reduced, which is absurd.
\item If $i_{j+1}=i_j+1$, then there exists $c$ such that we have 
\begin{align*}
s_{i_1}\cdots s_{i_{j}}&=[\ldots,b,a,c,\ldots], \\
s_{i_1}\cdots s_{i_{j+1}}&=[\ldots,b,c,a,\ldots].
\end{align*} 
However, $(a,b)$ and $(a,c)$ are in the same column of $\lambda_n$, and by hypothesis we have $f(a,b)=r_j=r_{j+1}=f(a,c)$, and this contradicts the fact that $f \in \SSF(\Inv(\sigma),\U^{col})$.
\item If $i_{j+1}>i_{j}+1$, then there exists two integers $c$ and $d$ such that 
\begin{align*}
s_{i_1}\cdots s_{i_{j}}&=[\ldots,b,a,\ldots,c,d,\ldots], \\
s_{i_1}\cdots s_{i_{j+1}}&=[\ldots,b,a,\ldots,d,c,\ldots].
\end{align*} 
And this contradicts the fact that $(c,d)$ is a leading-cell (of $f|_{\Inv(s_{i_1}\cdots s_{i{j+1}})}$, see the iterative definition of the sequences above). Indeed, we have $f(a,b)=f(c,d)$ and $\sigma^{-1}(a)<\sigma^{-1}(c)$, which contradicts the minimality of $\sigma^{-1}(c)$.
\end{itemize}
In all cases, we have a contradictions. Thus, $i_{j+1} \leq i_j$, and this concludes the proof.

\textbf{Step 2: construction of the reverse function.} Let $s_{i_1}\cdots s_{i_{\ell(\sigma)}} \in {\rm Red}(\sigma)$ and $r_1 \leq \ldots \leq r_{\ell(\sigma)}$ be a sequence of integers such that $r_j<r_{j+1}$ whenever $i_j<i_{j+1}$.
It is easy to associate a function $f:\Inv(\sigma) \to \mathbb{N}^*$ to this pair of sequences: let  $L=[(a_i,b_i)]_{1 \leq i \leq \ell(\sigma)} \in PS_{\Inv(\sigma)}(\A)$ be the sequence such that for all $j$,
\begin{align*}
s_{i_1}\cdots s_{i_{j}}&=[\ldots,a_{j},b_j,\ldots], \\
s_{i_1}\cdots s_{i_{j+1}}&=[\ldots,b_j,a_j,\ldots].
\end{align*} 
Then, we define $f:\Inv(\sigma) \to \mathbb{N}^*$ by $f(a_j,b_j)=r_j$ for all $j$. Let us first show that $f \in \SSF(\Inv(\sigma))$. In order to do so, let us consider $j<k$ such that $a_j=a_k$. We will prove that $f(a_j,b_j)<f(a_k,b_k)$, implying that $f$ is in $\SSF(\Inv(\sigma))$.
Consider the sequence $i_j,i_{j+1},\ldots,i_k$, and assume by contradiction that this sequence is decreasing. Then, for all $i \leq q <k$ we have either 
\begin{align*}
s_{i_1}\cdots s_{i_{q}}&=[\ldots,c,a_{j},\ldots], \\
\text{and} \ s_{i_1}\cdots s_{i_{q+1}}&=[\ldots,a_j,c,\ldots],
\end{align*} 
or we have
\begin{align*}
s_{i_1}\cdots s_{i_{q}}&=[\ldots,c,d,\ldots,a_{j},\ldots], \\
\text{and} \ s_{i_1}\cdots s_{i_{q+1}}&=[\ldots,d,c,\ldots,a_{j},\ldots].
\end{align*}
In other words, we obtain $s_{i_1}\cdots s_{i_{q+1}}$ from $s_{i_1}\cdots s_{i_{q}}$ either by swapping positions of $a_j$ with an integer just on its left, or by swapping positions of two integers being on the left of $a_j$. Therefore, we $a_k \neq a_j$, which is absurd. Thus, there exists $j\leq q<k$ such that $i_q \leq i_{q+1}$, but $s_{i_1}\cdots s_{i_{\ell(\sigma)}}$ is reduced, so that $i_q < i_{q+1}$. Consequently, we have $r_q<r_{q+1}$ by definition so $f(a_j,b_j)<f(a_k,b_k)$.
 
In order to complete the proof, we just have to show that this function is the inverse of $\Psi$, and this can be easily done recursively: assume by contradiction that $(a_{\ell(\sigma)},b_{\ell(\sigma)})$ is not the leading cell of $f$. Then, there exists $k<\ell(\sigma)$ such that $f(a_k,b_k)=f(a_{\ell(\sigma)},b_{\ell(\sigma)})$ and $\sigma^{-1}(a_k)<\sigma^{-1}(a_{\ell(\sigma)})$. Therefore, we have
\[ \sigma=[\ldots,a_k,\ldots,b_{\ell(\sigma)},a_{\ell(\sigma)},\ldots],\]
so that there is an integer $q$ such that $k \leq q < \ell(\sigma)$ and $i_q<i_{\ell(\sigma)}$, so that we have
\[ f(a_k,b_k)=r_k\leq r_q< r_{\ell(\sigma)}=f(a_{\ell(\sigma)},b_{\ell(\sigma)}), \]
and this is absurd. Repeating this argument, we have the expected property by induction, and this ends the proof.
\end{proof}

This construction leads to a combinatorial interpretation of $F_{\sigma}$ as a sum over a set of tableaux (which are depicted on Figure~\ref{FigPosterTab}): let us consider a permutation $\sigma \in S_n$ and denote by $A$ its inversion set $\Inv(\sigma)$ seen as as subset of boxes of $\lambda_n$. Clearly, $A$ inherits the digraph structure and the valuation of $\A$. Moreover, $A$ defines a valued digraph because $A \in IS(\A)$ (in a sense, $A$ define a ``sub-valued digraph of $\A$''), so that we can perform the peeling process on it. Obviously, the arising sequences are precisely the elements of $PS_A(\A)$, and we can represent each element of $PS_A(\A)$ as a tableau of shape $A$. That is, let $L=[z_1,\ldots,z_k]\in PS_A(\A)$, then $L$ can be represented as a tableau of shape $A$ where the box $z_i$ is filled by the integer $i$. These tableaux can be seen as the equivalent counterpart of standard tableaux within our theory. 

Similarly, we can define a family of ``semi-standard'' tableaux by the following way: let $L=[z_1,\ldots,z_k]\in PS_A(\A)$, we construct a tableau of shape $A$ by putting an integer $t_i$ in the box $z_i$, satisfying the following two conditions:
\begin{enumerate}
\item the sequence $(t_i)$ is weakly increasing along $L$;
\item a given integer cannot appear twice in the same column (equivalently, if $i<j$ and $z_i$ and $z_j$ are in the same column, then $t_i<t_j$).
\end{enumerate} 
Clearly, these tableaux are in bijection with the elements of $\SSF(A)$. Therefore, if we denote by $x^T$ the monomial $x_1^{T(1)}x_2^{T(2)}\cdots$ where $T(i)$ is the number of occurrences of $i$ in a tableau $T$ obtained by the previous method, then the Stanley symmetric function $F_{\sigma}$ is the sum over all the tableaux of these monomials $x^T$. 
\begin{figure}[!h]
\includegraphics[width=0.7\textwidth]{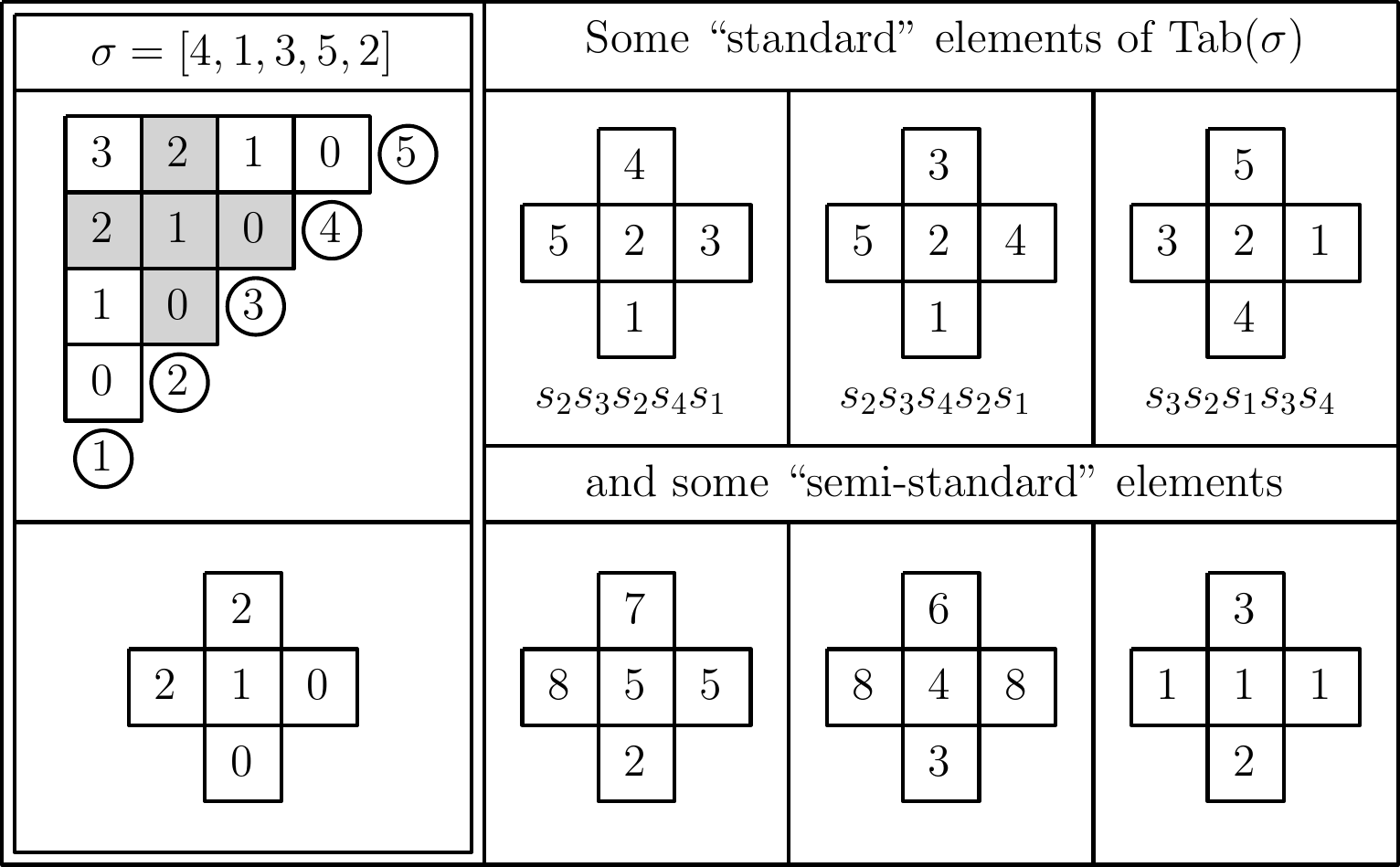}
\caption{}\label{FigPosterTab}
\end{figure}

\subsection{Type $\widetilde{A}$ and affine Stanley series}\label{SectStanAff}

In this section we apply the same method used in the previous section to the valued digraph $\widetilde{\A}=(G,\theta)$, associated with the weak order on $\widetilde{A}_n$ (see Section~\ref{SectAffineA}). Once again, the graphical representation of $G$ as a diagram leads to a natural choice for a set of generalized columns, given by the usual columns of $G$.

\begin{definition}
We denote by $\U^{col}=(\U_{(a,b)})_{1\leq a \leq n, \ a<b}$ the set of generalized columns of $\widetilde{A}$ defined by:
\[\U_{(a,b)}:=\{(a,k) \ | \ a<k \ {\rm and} \ k \not \equiv a \!\! \pmod n\}.\]
\end{definition}

As in the $A_{n-1}$ case, the series arising from this choice of generalized columns is known, namely the affine Stanley symmetric function, introduced by Lam in \cite{Lam}. Note that a combinatorial interpretation in terms of tableaux of this series has already been provided by Yun and Yoo in \cite{YY}, and the one arising from our model is very similar. Therefore, the proofs are similar. 

Let us begin with the definition of the affine Stanley symmetric function.

\begin{definition}
A sequence $(i_1,\ldots,i_k)$ of elements of $\mathbb{Z}/n\mathbb{Z}$ is called \emph{cyclically decreasing} if and only if:
\begin{itemize}
\item each element of $\mathbb{Z}/n\mathbb{Z}$ appears at most once in the sequence;
\item if there exists $p$ and $q$ such that $i_p=j$ and $i_q=j+1$, then $q<p$.
\end{itemize}
An affine permutation $\sigma \in \widetilde{A_n}$ is called cyclically decreasing if there exists a cyclically decreasing sequence $(i_1,\ldots,i_k)$ such that $s_{i_1}\cdots s_{i_k}$ is a reduced decomposition of $\sigma$ (note that $Id$ is cyclically decreasing by convention). For any $\omega\in \widetilde{A_n}$, a \emph{cyclically decreasing factorization} of $\omega$ is an expression of $\omega$ as a product $\omega=v_1v_2\cdots v_r$ such that:
\begin{itemize}
\item each $v_i \in \widetilde{A_n}$ is cyclically decreasing;
\item $\ell(\omega)=\ell(v_1)+\cdots+\ell(v_r)$.
\end{itemize}
Finally, the \emph{affine Stanley symmetric function} $F_{\omega}$ associated with $\omega$ is defined as
$$\widetilde{F}_{\omega}(x_1,x_2,\ldots):=\sum_{\omega=v_1\cdots v_r} \ x_1^{\ell(v_1)}\cdots x_r^{\ell(v_r)},$$ 
where the sum is over all cyclically decreasing factorisations of $\omega$ (see \cite{Lam}).
\end{definition}

\begin{definition}
Let $\omega \in \widetilde{A_n}$, $f \in \SSF(\Inv(\omega))$, $L=[(a_i,b_i)] \in PS_{\Inv(\omega)}(\widetilde{A})$ be a $f$-compatible peeling sequence and $s_{i_1}\ldots s_{i_{\ell(\omega)}}$ be the reduced decomposition of $\omega$ associated with $L$. We define the following factorization of $\omega$: 
\[ \Psi(f,L):=v_1v_2\cdots v_r,\]
where $v_k$ is defined as follows: for all $j$,
\begin{itemize}
\item if $f(a_j,b_j) \neq k$, then $v_k=Id$;
\item if there exists $p \leq q$ such that $f(a_j,b_j)=k$ if and only if $p \leq j \leq q$, then $v_k=s_{i_p}s_{i_{p+1}}\cdots s_{i_q}$.
\end{itemize}
\end{definition} 

\begin{proposition}\label{TechStanAff}
The function $\Psi$ does not depend on the choice of the $f$-compatible peeling sequence $L$, and $\Psi(f,L)$ is a cyclically decreasing factorization of $\omega$. 
\end{proposition}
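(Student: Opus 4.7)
The plan is to analyze $\Psi(f,L) = v_1 \cdots v_r$ one block at a time, using the combinatorial correspondence between the peeling process on $\widetilde{\A}$ and reduced decompositions of $\omega$ established in Section~\ref{SectAffineA}. Since $f$ is weakly increasing along $L$, each $f$-level set $B_k = \{j : f(a_j,b_j)=k\}$ is a set of consecutive indices $j_1^{(k)} < \cdots < j_{s_k}^{(k)}$ and $v_k = s_{i_{j_1^{(k)}}} \cdots s_{i_{j_{s_k}^{(k)}}}$. Because $s_{i_1} \cdots s_{i_\ell}$ is a reduced decomposition of $\omega$, each $v_k$ is automatically a reduced product and $\sum_k \ell(v_k) = \ell(\omega)$ is automatic. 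The fact I use throughout is that at step $j$, $s_{i_j}$ is applied to $\sigma_{j-1}$, swapping $a_j$ and $b_j$ at adjacent positions $p,p+1$ with $p \equiv i_j \pmod{n}$.

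For the cyclically decreasing property of $v_k$, the first input is the column-strictness of $f$ with respect to $\U^{col}$: within each block $B_k$ no two cells can share their first coordinate, otherwise the later one would lie in $\U_{(a,b)}$ of the earlier, forcing a strictly higher $f$-value. Hence $a_{j_1^{(k)}}, \ldots, a_{j_{s_k}^{(k)}}$ are pairwise distinct. From this I deduce first that the indices $i_{j_1^{(k)}}, \ldots, i_{j_{s_k}^{(k)}}$ are pairwise distinct modulo $n$, and second that they satisfy the cyclic decreasing order condition. If $i_{j_t} \equiv i_{j_{t'}} \pmod{n}$ for $t<t'$ in $B_k$, I trace the entry at position $\equiv i_{j_t}$ through the intermediate permutations $\sigma_{j_t},\ldots,\sigma_{j_{t'}-1}$: to displace $a_{j_t}$ from position $i_{j_t}+1$ and bring $a_{j_{t'}}$ into position $i_{j_t}$ requires intermediate swaps at position $i_{j_t}$ or $i_{j_t}-1$, each of which creates an inversion whose first coordinate coincides with $a_{j_t}$ or with some $a_{j_{t''}}$ already in $B_k$, contradicting the distinctness of first coordinates. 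A parallel position-tracking argument handles the cyclic order: if $i_{j_t}=e$ and $i_{j_{t'}}=e+1$ with $t<t'$, the entry $a_{j_t}$ lands at position $e+1$ right after step $j_t$ and must be displaced before step $j_{t'}$ so that $a_{j_{t'}}$ can sit there, which again forces a column-conflict. Thus whenever both $e$ and $e+1$ appear in $\{i_j : j \in B_k\}$, the latter precedes the former, as required.

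To prove independence from $L$, I observe that any two $f$-compatible peeling sequences of $\Inv(\omega)$ can be transformed into one another by a sequence of valid transpositions of consecutive cells at the same $f$-level. For such an adjacent pair $(a_j,b_j),(a_{j+1},b_{j+1})$ simultaneously erasable from $\widetilde{\A}_{A_{j-1}}$, the two pairs of positions $\{p_j,p_j+1\}$ and $\{p_{j+1},p_{j+1}+1\}$ (with $p_j \equiv i_j$, $p_{j+1} \equiv i_{j+1} \pmod n$) must be disjoint in $\mathbb{Z}$, else the cells would share an entry and one would cease to be erasable after the other is peeled. Together with the position analysis above, this forces $|i_j - i_{j+1}| \not\equiv 0,\pm 1 \pmod{n}$, so $s_{i_j}$ and $s_{i_{j+1}}$ commute. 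The swap therefore leaves every $v_k$ invariant, and iterating yields $\Psi(f,L)=\Psi(f,L')$. The principal obstacle is the careful position-tracking in the affine setting, where cyclic wrap-around of both positions and Coxeter indices must be accommodated in all three sub-arguments; this parallels Yun and Yoo's treatment in \cite{YY} of their affine tableau model, on which I rely as a blueprint.
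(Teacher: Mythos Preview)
Your treatment of the cyclically decreasing property is in the same spirit as the paper's: both rely on column-strictness (distinct first coordinates within a block) and track positions of entries through the intermediate permutations to produce a contradiction. The paper organizes this as a single case split (does some $b_j$ equal $a_u$ between steps $u$ and $v$ or not), whereas you separate the distinctness of the $i_j$'s from the order condition; these are essentially the same argument in different packaging.

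The independence argument, however, has a genuine gap. You assert that any two $f$-compatible peeling sequences are connected by adjacent transpositions within $f$-levels, and then show each such transposition corresponds to commuting generators. The second step is fine, but the connectivity claim is non-trivial and you do not justify it. In general, two maximal chains in an interval of the weak order correspond to two reduced words for the same element, and reduced words are connected by braid moves, \emph{not} by commutations alone. Within block $k$ you would need $v_k$ to be fully commutative to conclude connectivity by commutations --- but that is essentially what you are trying to prove, so the argument is circular as stated.

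The paper bypasses this entirely with a one-line observation: for each $k$, the set $\{c\in\Inv_{\widetilde{A}}(\omega):f(c)\leq k\}$ is the initial section of any $f$-compatible sequence, hence lies in $IS(\widetilde{\A})$ and equals $\Inv_{\widetilde{A}}(\sigma_k)$ for a \emph{unique} $\sigma_k\in\widetilde{A_n}$. Then $v_k=\sigma_{k-1}^{-1}\sigma_k$ depends only on $f$, not on $L$. This avoids any need to analyse how different peeling sequences are related.
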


\begin{proof}
Since $v_k$ is uniquely determined by the inversions $(a_j,b_j)$ for which $f(a_j,b_j)=k$, the first statement of the proposition is clear.

It remains to show that $v_k$ is cyclically decreasing for all $k$. If $v_k=Id$, then it is clear. Let $k$ be such that $v_k \neq Id$, hence there exists $p\leq q$ such that $v_k=s_{i_p}\cdots s_{i_q}$. If $p=q$, $v_k$ is obviously cyclically decreasing. 

We now focus on the case $p<q$. Assume by contradiction that there exists $p\leq u<v\leq q$ such that $i_u=i_v+1$. Without loss of generality, we can suppose that $v$ is minimal with this property. 
As usual, we will denote by $\omega_m$ the affine permutation $s_{i_1}\cdots s_{i_m}$. We have that $\omega_{m+1}$ is obtained from $\omega_m$ by swapping the positions of all the pair of integers $a_m+rn$ and $b_m+rn$, $r \in \mathbb{Z}$. For the sake of clarity, we will just say that the positions of $a_m$ and $b_m$ are swapped. Let us now split the study into two cases.
\begin{itemize}
\item If for all $u\leq j<v$ we have $b_j\neq a_u$, then the position of $a_u$ remains unchanged in each $\omega_j$. Thus, by minimality of $v$, $(a_v,b_v)=(a_u,b_v)$, which is absurd since $(a_u,b_u)$ and $(a_u,b_v)$ are in the same column and $f\in \SSF(\Inv(\omega),\U^{col})$.
\item If there exists $u \leq j<v$ such that $b_j=a_u$, then there exists an integer $m$ such that we have
\begin{equation*}
\omega_{j}=[\ldots,m,a_u,\ldots] \ \ \text{and} \ \
\omega_{j+1}=[\ldots,a_u,m,\ldots].
\end{equation*}
Since we have
$\omega_{u}=[\ldots,b_u,a_u,\ldots]$,
there exists $u \leq j' <j$ such that 
\begin{equation*}
\omega_{j'}=[\ldots,m,b_u,\ldots] \ \ \text{and} \ \
\omega_{j'+1}=[\ldots,b_u,m,\ldots].
\end{equation*}
Therefore, we have $f(m,b_u)=f(m,a_u)$ which is absurd since $(m,b_u)$ and $(m,a_u)$ are in the same column.
\end{itemize}
In all cases we have a contradiction, so that $v_k$ is cyclically decreasing, and this concludes the proof.
\end{proof}

We are now able to prove the main theorem of this section, using this function $\Psi$ (the proof is similar to that of the $A_{n-1}$ case).

\begin{Theorem}
For all $\omega \in \widetilde{A_n}$, we have 
\[ \widetilde{F}_{\omega}=\Gamma(\Inv_{\widetilde{A}}(\omega),\U^{col}). \]
\end{Theorem}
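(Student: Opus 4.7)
The plan is to use the map $\Psi$ introduced just before the statement as one half of a weight-preserving bijection between $\SSF(\Inv_{\widetilde{A}}(\omega),\U^{col})$ and the set of cyclically decreasing factorizations of $\omega$. Proposition~\ref{TechStanAff} has already shown that $\Psi(f,L)$ is a well-defined cyclically decreasing factorization independent of the $f$-compatible peeling sequence $L$, so $\Psi$ descends to a map $\SSF(\Inv_{\widetilde{A}}(\omega),\U^{col}) \to \{\text{c.d.\ factorizations of }\omega\}$. Weight preservation is immediate: if $\Psi(f,L)=v_1\cdots v_r$, then by construction $\ell(v_k) = |f^{-1}(\{k\})|$, so $\prod_{z \in \Inv_{\widetilde{A}}(\omega)}x_{f(z)} = x_1^{\ell(v_1)}\cdots x_r^{\ell(v_r)}$.

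I then construct an inverse $\Phi$. Given a cyclically decreasing factorization $\omega = v_1\cdots v_r$, I choose for each $v_k \neq Id$ a cyclically decreasing reduced decomposition $s_{i_{p_k}}\cdots s_{i_{q_k}}$. Concatenating yields a reduced decomposition of $\omega$, which by Lemma~\ref{LemAffAdja-Cover} translates into a peeling sequence $L=[(a_j,b_j)]\in PS_{\Inv_{\widetilde{A}}(\omega)}(\widetilde{\A})$. Define $f$ on $\Inv_{\widetilde{A}}(\omega)$ by $f(a_j,b_j):=k$ whenever $p_k\le j\le q_k$; by construction $f$ is weakly increasing along $L$.

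The main obstacle will be verifying that $f$ is genuinely column-strict, i.e.\ lies in $\SSF(\Inv_{\widetilde{A}}(\omega),\U^{col})$. Concretely, one must prove that for $p_k\le j<j'\le q_k$ one has $a_j\ne a_{j'}$ (the congruence condition on second coordinates in $\U^{col}$ is then automatic). The idea is to track the position of the integer $a_j$ in the partial products $\omega_j, \omega_{j+1},\ldots,\omega_{j'}$, splitting on whether some $s_{i_m}$ with $p_k \le m < j'$ moves $a_j$; an analysis exactly parallel to the case split in the proof of Proposition~\ref{TechStanAff} then produces two indices $u<v$ in $[p_k,q_k]$ with $i_u = i_v+1$, contradicting cyclic decreasingness of $v_k$. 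The same sort of argument shows that two different choices of reduced decomposition for the same $v_k$ yield the same $f$, so $\Phi$ is well-defined.

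Finally, I will check that $\Psi\circ\Phi$ and $\Phi\circ\Psi$ are the identity; this is bookkeeping, since $\Psi$ regroups inversions by level and $\Phi$ reconstructs the level of each inversion from its position in the peeling sequence. Combined with the weight-preservation observation, the bijection gives
\[
\Gamma(\Inv_{\widetilde{A}}(\omega),\U^{col}) \;=\; \sum_{f\in \SSF(\Inv_{\widetilde{A}}(\omega))}\prod_{z}x_{f(z)} \;=\; \sum_{\omega=v_1\cdots v_r}x_1^{\ell(v_1)}\cdots x_r^{\ell(v_r)} \;=\; \widetilde{F}_\omega,
\]
which establishes the theorem.
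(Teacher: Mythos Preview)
Your plan is essentially identical to the paper's own proof: it also invokes Proposition~\ref{TechStanAff} for the forward map, builds the inverse by concatenating cyclically decreasing reduced words into a peeling sequence and labelling each inversion by its block index, and verifies column-strictness by the same two-case position-tracking argument. One small slip: the contradiction with cyclic decreasingness arises from finding $u<v$ with $i_v=i_u+1$, not $i_u=i_v+1$ as you wrote (the latter is actually compatible with cyclic decreasingness).
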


\begin{proof}
Thanks to Proposition~\ref{TechStanAff}, we have a map which associate to each $f\in \SSF(\Inv(\omega))$ a cyclically decreasing factorization $\omega=v_1\cdots v_r$. Moreover, we clearly have that
$$x_1^{\ell(v_1)}\cdots x_r^{\ell(v_r)}=\prod_{c \in \Inv_{\widetilde{A}}(\omega)} x_{f(c)}.$$ 
Consequently, we just have to show that this function is a bijection to prove the theorem.

Let us construct the reverse function: let $\omega=v_1\cdots v_r$ be a cyclically decreasing factorization of $\omega$. Consider any cyclically decreasing reduced decomposition of $v_i$ and concatenate them to get a reduced decomposition of $\omega$, \emph{i.e.} $\omega=(s_{i_1}\cdots s_{i_{\ell(v_1)}})(s_{i_{\ell(v_1)+1}}\cdots s_{i_{\ell(v_1)+\ell(v_2)}})\cdots$. Let $L=[(a_i,b_i)]_i \in PS_{\Inv(\omega)}(\widetilde{A})$ be the peeling sequences canonically associated with this reduced decomposition, and we define a function $f$ from $\Inv_{\widetilde{A}}(\omega)$ to $\mathbb{N}^*$ by the following way: 
\[ {\rm if} \ \ell(v_1)+\cdots+\ell(v_k)+1\leq j \leq \ell(v_1)+\cdots+\ell(v_k)+\ell(v_{k+1}), \ {\rm then} \ f(a_j,b_j)=k+1. \]
Clearly this function $f$ does not depends on the cyclically decreasing reduced decomposition chosen for each $v_i$, but depends only on the cyclically decreasing factorization of $\omega$. 

Let us prove that $f$ is in $\SSF(\Inv_{\widetilde{A}}(\omega))$
As usual, we denote $\omega_j=s_{i_1}\cdots s_{i_j}$. Set $k$ such that $\ell(v_k)\geq 2$, $p=\ell(v_1)+\cdots +\ell(v_{k-1})+1$, and $q=\ell(v_1)+\cdots +\ell(v_k)$. Now assume that there exists $p\leq u<v \leq q$ such that $f(a_u,b_u)=f(a_v,b_v)=k$ with $a_u=a_v$. Without loss of generality, we can suppose that $u$ is maximal with this property, and that $v$ is minimal with this property. Once again, there are two cases.
\begin{itemize}
\item If for all $u<j<v$, $b_j \neq a_u$, then the position of $a_u$ remains unchanged in $\omega_{u+1},\ldots,\omega_{v-1}$. Hence by minimality of $v$ we have $i_v=i_u+1$, which is absurd since $v_k$ is cyclically decreasing.
\item If there exists $u<j<v$ such that $b_j=a_u$, then there exists $u<j'<j$ such that $(a_{j'},b_{j'})=(a_j,b_u)$ since $b_u$ is just on the left of $a_u$ in $\omega_u$. Hence we found $j'>u$ such that there exists $j>j'$ with $a_{j'}=a_j$, which is absurd by maximality of $u$. 
\end{itemize}
Hence $f \in \SSF(\Inv(\omega))$, and this function clearly invert the one defined earlier, and this achieves the proof of the theorem.
\end{proof}

\end{document}